\renewcommand{\subsection}{\@startsection
{subsection}{2}{0mm}{\baselineskip}{-0.25cm}
{\normalfont\normalsize\em}}
 \newtheorem{thm}{Theorem}[section]
\newtheorem{dfn}{Definition}[section]
\newtheorem{cor}[thm]{Corollary}
\newtheorem{prop}[thm]{Proposition}
\newtheorem{lemma}[thm]{Lemma}
\newtheorem{rem}[thm]{Remark}
\newtheorem{exam}[thm]{Example}
\def \Fb{{\bf F}}
\def \Fbc{\overline{\bf F}}
\numberwithin{equation}{section}
\title[Quantum codes from algebraic geometry codes]{Quantum codes from a new construction of self-orthogonal algebraic geometry codes}
\author{F.~Hernando}
\address{Universitat Jaume I (UJI),  Campus de Riu Sec, Institut Universitari de Matem\`atiques i Aplicacions de Castell\'o, 12071 Castell\' on
de la Plana, Spain.}
\email{carrillf@uji.es}
\author{G.~McGuire}
\address{UCD School of Mathematics and Statistics, University College Dublin, Dublin 4 (Ireland).}
\email{gary.mcguire@ucd.ie}
\author{F.~Monserrat}
\address{Instituto Universitario de Matemática Pura y Aplicada, Universidad Politécnica
de Valencia, Camino de Vera s/n, 46022 Valencia (Spain).}
\email{framonde@mat.upv.es}
\author{J.~J.~Moyano-Fern\' andez}
\address{Universitat Jaume I (UJI),  Campus de Riu Sec, Institut Universitari de Matem\`atiques i Aplicacions de Castell\' o , 12071 Castell\' on
de la Plana, Spain.}
\email{moyano@uji.es}
\begin{document}

\begin{abstract}
We present new quantum codes with good parameters which are constructed from self-orthogonal algebraic geometry codes. 
Our method permits a wide class of curves to be used in the formation of these codes.
These results demonstrate that there is a lot more scope for constructing self-orthogonal AG codes than was previously known.
\end{abstract}

\keywords{Algebraic geometry codes, quantum error-correction, algebraic curves, finite fields}
\subjclass[2010]{94B27, 11T71, 81P70, 14G50}

\thanks{The second author was partially supported by Science
Foundation Ireland  Grant 13/IA/1914. The remainder authors were partially supported by the Spanish Government and the EU funding program FEDER, grants MTM2015-65764-C3-2-P and PGC2018-096446-B-C22. The first and fourth authors are also partially supported by Universitat Jaume I, grant UJI-B2018-10.}

\maketitle

\section{Introduction}

Polynomial time algorithms on quantum computers for integer prime factorization and discrete logarithms were given by Shor \cite{22RBC}. This justifies the great importance of quantum computation and, specifically, the relevance  of quantum error-correcting codes because they protect quantum information from decoherence and quantum noise. Over the last twenty-five years, error-correction has proved to be one of the main obstacles to scaling up quantum computing and quantum information processing. 
One of the first examples of a quantum error-correcting code is Shor's 9-qubit code \cite{shor}
which has been generalized in a series of many papers, including \cite{7kkk,8kkk,18kkk,calderbank98,Calderbank,38kkk,45kkk,AK,BE,35kkk,opt,71kkk}. Nowadays the theory of quantum error-correcting codes is a very active area of research (see \cite{lag3, lag2, QINP2,GHM-FFA,QINP,refer, GHR-IEEE} for some recent publications).

A classical linear error-correcting code is called \emph{self-orthogonal} if it is contained in its dual code. The CSS
(Calderbank-Shor-Steane) construction \cite{Calderbank, Steane}  showed that classical self-orthogonal codes with certain properties are useful in
the construction of quantum error-correcting codes.
As a result, people looking for good
quantum error-correcting codes started trying to 
 find classical self-orthogonal codes with the required properties.

In the 1970s and early 1980s, 
using concepts and tools coming from algebraic geometry, Goppa constructed error correcting linear codes from smooth and geometrically irreducible projective curves defined over a finite field (see \cite{AG, AG2, AFF-HS, Hoholdt}). They are called Goppa  or algebraic geometry (AG) codes and have played an important role in the theory of error-correcting codes. They were used to improve the Gilbert-Varshamov bound \cite{t-v-z} which was a
surprising result at that time.  In fact, every linear code can be realized as an algebraic geometry code \cite{Pellikaan}.
In the area of quantum information processing, what is important is that AG codes provide a natural  
context and method for finding classical self-orthogonal codes.
Thus,  researchers have focussed on finding suitable self-orthogonal AG codes 
because they 
 give rise to good quantum error-correcting codes.


Many of the properties of AG codes that give rise to good quantum error-correcting codes
were captured in the definition of Castle curves by Munuera, Sep\'ulveda, and Torres  \cite{Munuera2}.
In \cite{Quantum}, Munuera, Ten\'orio and Torres use the specific properties of algebraic geometry codes coming from Castle and weak Castle curves to provide new sequences of self-orthogonal codes. Essentially, they use Lemma 2 and Proposition 2 of \cite{Quantum} 
to provide those sequences.

 The main purpose of this paper is to show that there is a much larger family
of curves from which to obtain self-orthogonal AG codes and good quantum codes.
This family includes Castle curves.
As a demonstration we provide some examples and some families of curves giving sequences of one-point self-orthogonal AG codes which are not covered in \cite{Quantum}.

This paper is laid out as follows.
In Section \ref{sect2} we briefly summarize the construction of AG codes and establish some notation that will be used in the paper. In Section \ref{sect3} we state and prove the main theoretical results (Theorem \ref{theorem2} and  corollaries) that generalize the construction of Castle curves and will allow us to present the afore-mentioned sequences of self-orthogonal codes. The next sections are  devoted to applying these results and obtaining explicit families of curves giving rise to those sequences. In Section \ref{sect5}, we use them to obtain quantum codes with good parameters,
and we compare our results to previous papers.

In the numerical examples we use the computational algebra system MAGMA \cite{magma}.

\section{AG codes and their duals}\label{sect2}

Throughout this and next section, we fix an arbitrary finite field $\Fb$. Let $\chi$ be a nonsingular, projective and geometrically irreducible curve $\chi$ of genus $g$ over $\Fb$ (we will say simply `curve' for abbreviation). We write $\overline{\Fb}$ for an algebraic closure of $\Fb$ and $\chi(\Fb')$ denotes the set of $\Fb'$-valued points of $\chi$ for any field extension $\Fb'/\Fb$.

A \emph{divisor} of $\chi$ is a formal sum $D=\sum_{i=1}^r n_i P_i$, where $r$ is a positive integer, $P_i\in \chi(\overline{\Fb})$ and $n_i\in \mathbb{Z}\setminus \{0\}$ for all $i=1,\ldots,r$, and moreover $P_i\neq P_j$ if $i\neq j$. We will say that the divisor $D$ is \emph{$\Fb$-rational} if $D^\sigma=D$, where $D^{\sigma}:=\sum_{i=1}^r n_i\sigma(P_i)$, and $\sigma: \overline{\Fb}\rightarrow \overline{\Fb}$ is the Frobenius ${\Fb}$-automorphism.
 Equivalently, $D$ can be regarded as a linear combination of places of $\Fb(\chi)/ \Fb$  with integer coefficients \cite[Def. 1.1.8]{AFF-HS}, where $\Fb(\chi)$ denotes the function field of $\chi$. The \emph{support} of  $D$, denoted by $\mathrm{Supp}(D)$, is the set of points $\{P_1,\ldots, P_r\}$, and the \emph{degree} of $D$ is defined as $\deg(D):=\sum_{i=1}^r n_i \deg(P_i)$, where $\deg(P_i)$ denotes the cardinality of the orbit of $P_i$ under the action of $\sigma$ (or, equivalently, the degree of the extension $k(P_i)/ \Fb$, where $k(P_i)$ is the residue field of $P_i$). Notice that a point $P$ is \emph{$\Fb$-rational} (i.e.~$P\in \chi(\Fb)$) if and only if $\deg(P)=1$. 
 
For every rational function $f$ on $\chi$, not identically $0$, the \emph{divisor of} $f$ is
$$(f):=\sum_{P\in \chi(\overline{\Fb})} \nu_P(f)P$$
where, for each point $P\in \chi(\overline{\Fb})$, $\nu_P$ denotes the discrete valuation at $P$
defined as follows: for any $z$ belonging to the local ring ${\mathcal O}_{\chi,P}$ of $\chi$ at $P$, $\nu_P(z)$ is defined as the non-negative integer $m$ such that $z=ut^m$, $u$ being a unit and $t$ a generator of the maximal ideal of ${\mathcal O}_{\chi,P}$. A point $P\in \chi(\overline{\Fb})$ is said to be a \emph{zero} (resp. a \emph{pole}) of $f$ if $\nu_P(f)>0$ (resp., $\nu_P(f)<0$). Notice that $(f)=(f)_0-(f)_{\infty}$, where 
$(f)_0=\sum_{\nu_P(f)>0} \nu_P(f)P$ is the \emph{divisor of zeroes} of $f$ and
$(f)_{\infty}=\sum_{\nu_P(f)<0} \nu_P(f)P$) is the  \emph{divisor of poles}  of $f$.

A divisor $D$ as above is \emph{effective} if $n_i>0$ for all $i=1,\ldots,r$; we write then $D\geq 0$. Also, given two divisors $D$ and $D'$, the notation $D\geq D'$ means that the divisor $D-D'$ is effective. We also consider the following finite-dimensional $\Fb$-vector space associated with $D$:
\begin{align*}
{\mathcal L}(D):=&\{f\in \Fb(\chi)\mid D+(f)\geq 0\}\cup \{0\},
\end{align*}
where $(f)$ denotes the divisor associated to $f$. 
\medskip

For a fixed set of $\Fb$-rational points ${\mathcal P}:=\{P_1,P_2,\ldots,P_N\}$ on $\chi$, set $D:=P_1+P_2+\cdots+P_N$, and let $G$ be another $\Fb$-rational divisor of $\chi$ whose support is disjoint from ${\mathcal P}$. Consider the $\Fb$-vector space
$$
\Omega(D):=\{\omega\in \Omega(\chi)\mid (\omega)\geq D\}\cup \{0\},
$$
where $\Omega(\chi)$ is the $\Fb(\chi)$-vector space of rational differential forms over $\chi$, and $(\omega)$ denotes the divisor associated to any $\omega\in  \Omega(\chi)$.

\begin{dfn}
The \emph{AG code} associated to the triple $(\chi, D, G)$ is the linear code $C(D,G)$ of length $N$ over $\Fb$ given by the image of the linear map 
$$
ev_{\mathcal P}:{\mathcal L}(G)\rightarrow \Fb^N
$$ 
defined by $ev_{\mathcal P}(f):=(f(P_1), f(P_2),\ldots, f(P_N))$. 
\end{dfn}
It can be seen that its dual code, $C(D,G)^{\perp}$, coincides with the image of the map ${\rm res}_{\mathcal P}: \Omega(G-D)\rightarrow \Fb^N$ defined by ${\rm res}_{\mathcal P}(\omega)=({\rm res}_{P_1}(\omega), \ldots, {\rm res}_{P_N}(\omega))$, where ${\rm res}_{P_i}(\omega)$ stands for the residue of $\omega$ at $P_i$ for all $i=1,\ldots,N$. Furthermore, if $\omega$ is a differential form in $\Omega(\chi)$ with simple poles at $P_i$ and such that ${\rm res}_{P_i}(\omega)=1$ for all $i=1,\ldots,N$, then it holds that
$$
C(D,G)^{\perp}=C(D,(\omega)+D-G)
$$
(see, for instance, \cite[Lemma 1.38]{Duursma}).
Notice that a differential $\omega$ with these conditions does always exist.

\begin{dfn}
The code $C(D,G)$ is said to be \emph{self-orthogonal} if $C(D,G)\subseteq C(D,G)^\perp$. 
\end{dfn}

There is a  particular class of curves among those satisfying the definition of AG codes.
These are called Castle and weak Castle (pointed) curves, see \cite{Quantum, Munuera2}. A pointed curve is a pair $(\chi, P)$, where $\chi$ is a curve and $P\in\chi(\Fb)$ is a rational point on $\chi$.

Castle and weak Castle curves are defined taking into consideration the following notion. Let $\chi$ be a curve and $P$ an $\Fb$-rational point on $\chi$, and consider the valuation $v_P$ (attached to the local ring) at $P$. The set 
$$
\Gamma (P):=\Big \{-v_P(f) : f \in \bigcup_{k=0}^{\infty} \mathcal{L}(k P)\Big \}
$$
is an additive semigroup of $\mathbb{Z}$ which is called the Weierstra\ss~semigroup at the rational point $P$ of $\chi$.  We say that a pointed curve $(\chi, P)$ is \emph{Castle} if
\begin{enumerate}
\item $\Gamma (P)$ is symmetric, i.e., $h\in \Gamma (P)$ if and only if $2g-1-h \notin \Gamma (Q)$ for all $h$.
\item If $s:=\min \{ h \in \Gamma (P) : h \neq 0\}$, then $\# \chi (\Fb) = qs+1$.
\end{enumerate}
If we substitute condition (2) by
\begin{enumerate}
\item[(2')] There exist a morphism $\varphi:\chi \to \overline{\Fb} \cup \{\infty\}$ with $(\varphi)_{\infty}=\ell P$ as well as elements $a_1,a_2,\ldots , a_r\in \Fb$ such that $\varphi^{-1}(a_i)\subseteq \chi(\Fb)$ and $\# \varphi^{-1}(a_i)=\ell$ for all $i=1,\ldots, r$,
\end{enumerate}
then the pointed curve $(\chi, P)$ is said to be \emph{weak Castle}. Notice that the terminology makes sense, since Castle curves are always weak Castle curves \cite{Munuera2}.

\section{Main results}\label{sect3}

We start this section with some definitions and conventions. An \emph{affine plane curve over $\Fb$} will be a curve $C$ defined by an equation $g(x,y)=0$, where $g(x,y)\in \Fb[x,y]$, $(x, y)$ being affine coordinates. Considering projective coordinates $(X:Y:Z)$ such that $x=X/Z$ and $y=Y/Z$, we will denote by $\chi_C$  the projectivization of $C$, and by $\pi_C: \tilde{\chi}_C\rightarrow \chi_C$ the associated normalization morphism; in this way $\tilde{\chi}_C$ is a nonsingular model of $\chi_C$.

For every $a\in \Fb$, $L_a$ (resp., $L_{\infty}$) will denote the \emph{affine} line over $\Fb$ defined by the equation $x=a$ (resp., the \emph{projective} line over $\Fb$, called \emph{line at infinity}, with equation $Z=0$).

\begin{dfn}\label{oneplace}
{\rm
An affine plane curve $C$ over $\Fb$  \emph{is said to have only one place at infinity} if it is geometrically irreducible, there exists an $\Fb$-rational point $Q_{\infty}$ such that ${\chi}_C(\Fbc)\cap L_{\infty}(\Fbc)=\{Q_{\infty}\}$, $\chi_C$ has only one branch at $Q_{\infty}$ and this branch is defined over $\Fb$. We impose the additional condition that $C$ is not a line.
}

\end{dfn}

Notice that, in the situation of Definition \ref{oneplace}, there exists a unique point $P_{\infty}\in \tilde{\chi}_{C}(\Fbc)$ such that $\pi_C(P_{\infty})=Q_{\infty}$ and, moreover, $P_{\infty}$ is $\Fb$-rational. Since $\tilde{\chi}_{C}\setminus \{P_{\infty}\}$ and $C$ are isomorphic, we will identify the points of both curves.

\medskip

If $C_1$ and $C_2$ are affine or projective plane curves (with respective equations $A=0$ and $B=0$) and $Q$ is any point, then we write $I_Q(C_1,C_2)$ (and also $I_Q(A,B)$) for the intersection multiplicity of $C_1$ and $C_2$ at $Q$, see \cite[Def. 2.22]{Hoholdt}.  The intersection multiplicity is positive if and only if
$Q$ is a point on both $C_1$ and $C_2$.

\begin{dfn}
{\rm
Given two affine plane curves $C_1$ and $C_2$ over $\Fb$, we will say that $C_1$ and $C_2$ are \emph{transversal} if $I_Q(C_1,C_2)=1$ for all $Q\in C_1(\Fbc) \cap C_2(\Fbc)$. Also, we will say that $C_1$ and $C_2$ are \emph{$\Fb$-transversal} if they are transversal and, in addition, all the points in $C_1(\Fbc) \cap C_2(\Fbc)$ are $\Fb$-rational. }
\end{dfn}

Fixing a curve $C$, for every subset ${\mathcal A}$ of $\Fbc$, we will define  ${\mathcal P}_{\mathcal A}$ by
\[
{\mathcal P}_{\mathcal A} :=\{ (\alpha,\beta)\in C(\Fbc) : \alpha\in {\mathcal A} \}
\] 
and we will be studying the polynomial (where ${\mathcal A}$ is finite)
\[
f_{\mathcal A}(x):=\prod_{a\in {\mathcal A}} (x-a)
\]
and its derivative $f'_{\mathcal A}(x)$.
We will consider the divisor of zeros of the rational function $f'_{\mathcal A}(x)$, 
and if  
$$(f'_{\mathcal A} (x))_0=c_1Q_1+\cdots+c_sQ_s +mP_\infty$$ 
where the $Q_i$ are points in the affine chart and $P_\infty$ is the point at infinity of the curve, 
then we define a divisor $M$  by $M=c_1Q_1+\cdots
+c_sQ_s =(f'_{\mathcal A}(x))_0-mP_\infty$.
 It is easy to show that the divisor $M$ is $\Fb$-rational. We call $M$ the \emph{divisor of affine zeroes} of the rational function defined by the derivative $f'_{\mathcal A}(z)$.

\begin{thm}\label{theorem2}
Let $C$ be a smooth affine plane curve over $\Fb$ with only one place at infinity. Let $g$ be the genus of $\tilde{\chi}_C$ and
let $${\mathcal A}=
\{a\in \Fb\mid \mbox{$C$ and $L_a$ are $\Fb$-transversal}\}.
$$
Let $f_{\mathcal A}(z):=\prod_{a\in {\mathcal A}} (z-a)\in \Fb[z]$.
 Let $M$ be the divisor of affine zeroes of the rational function of $\tilde{\chi}_C$ defined by the derivative $f'_{\mathcal A}(z)$, as defined above.


Then the following hold:

\begin{itemize}
 \item[(a)] If $D$ is the divisor $\sum_{P\in {\mathcal P}_{\mathcal A}} P$, and $G$ is another $\Fb$-rational divisor such that $\mathrm{Supp}(G)\cap \mathrm{Supp}(D)=\emptyset$, then
$$
C(D,G)^{\perp}=C(D,(2g-2+\deg(D)-\deg(M))P_{\infty}+M-G).
$$
\item[(b)] If, in addition,  $2G\leq(2g-2+\deg(D)-\deg(M))P_{\infty}+M$ then 
 $C(D,G)\subseteq C(D,G)^{\perp}$.
\end{itemize}

\end{thm}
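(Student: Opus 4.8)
The plan is to specialize the duality identity $C(D,G)^{\perp}=C(D,(\omega)+D-G)$ recalled above to the logarithmic differential
\[
\omega:=\frac{df_{\mathcal A}(x)}{f_{\mathcal A}(x)}=\frac{f'_{\mathcal A}(x)}{f_{\mathcal A}(x)}\,dx
\]
and then to compute its divisor. First I would check that $\omega$ is admissible for the formula. If $P\in{\mathcal P}_{\mathcal A}$ lies over $a\in{\mathcal A}$, then $\Fb$-transversality of $C$ and $L_a$ forces $I_P(C,L_a)=1$, i.e.\ $x-a$ is a local uniformizer at $P$; hence $v_P(f_{\mathcal A}(x))=1$, so $\omega$ has a simple pole at $P$ with residue $v_P(f_{\mathcal A}(x))=1$. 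Since the cited lemma only imposes conditions on $\omega$ along ${\mathcal P}_{\mathcal A}$, the extra pole of $\omega$ at $P_{\infty}$ is harmless and $\omega$ qualifies.

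The heart of the proof of (a) is the identification $(\omega)+D=(2g-2+\deg(D)-\deg(M))P_{\infty}+M$. Writing $(\omega)=(f'_{\mathcal A}(x))-(f_{\mathcal A}(x))+(dx)$, I would argue as follows. Because $C$ has only one place at infinity, $x$ is regular on $C$ with $P_{\infty}$ as its unique pole, so $f_{\mathcal A}(x)$ and $f'_{\mathcal A}(x)$ are likewise regular on $C$ with poles only at $P_{\infty}$. By the transversality computation above, $(f_{\mathcal A}(x))_{0}=\sum_{P\in{\mathcal P}_{\mathcal A}}P=D$, and since a principal divisor has degree $0$ this gives $(f_{\mathcal A}(x))=D-\deg(D)P_{\infty}$. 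By the definition of $M$, $(f'_{\mathcal A}(x))_{0}=M$ — the $P_{\infty}$-component vanishes because $P_{\infty}$ is a pole, not a zero, of $f'_{\mathcal A}(x)$ — and the same degree count gives $(f'_{\mathcal A}(x))=M-\deg(M)P_{\infty}$. Substituting, $(\omega)+D=M+\bigl(\deg(D)-\deg(M)\bigr)P_{\infty}+(dx)$, so (a) comes down to the single identity $(dx)=(2g-2)P_{\infty}$.

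For (b) one first observes that $f'_{\mathcal A}$ does not vanish at any $a\in{\mathcal A}$, since $f'_{\mathcal A}(a)=\prod_{c\in{\mathcal A},\,c\neq a}(a-c)$ is a product of nonzero factors; hence $\mathrm{Supp}(M)\cap\mathrm{Supp}(D)=\emptyset$, and together with the hypothesis on $G$ this makes $E:=(2g-2+\deg(D)-\deg(M))P_{\infty}+M-G$ a divisor with support disjoint from $\mathrm{Supp}(D)$, so that $C(D,E)$ is defined and equals $C(D,G)^{\perp}$ by (a). The inequality $2G\le(2g-2+\deg(D)-\deg(M))P_{\infty}+M$ is precisely $G\le E$, whence ${\mathcal L}(G)\subseteq{\mathcal L}(E)$ and, applying $ev_{\mathcal P}$, $C(D,G)\subseteq C(D,E)=C(D,G)^{\perp}$.

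The step I expect to be the main obstacle is the identity $(dx)=(2g-2)P_{\infty}$. The differential $dx$ is regular on the affine curve $C$, so it has no affine poles, and its divisor has degree $2g-2$; the point is that it should have no affine zeros, i.e.\ that $x-x(Q)$ is a uniformizer at every $Q\in C$, equivalently that the map $x\colon\tilde{\chi}_C\to\mathbb{P}^1$ is unramified over the affine line. This is where smoothness of $C$ and the one-place-at-infinity hypothesis genuinely enter: the delicate point is to control $(dx)$ at the points of $C$ where $\partial_y g$ vanishes — by smoothness precisely the candidate ramification points of $x$ on $C$ — and to show that its whole order $2g-2$ is concentrated at $P_{\infty}$. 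Once this is in place, the rest is bookkeeping with orders at $P_{\infty}$ and with the transversality condition.
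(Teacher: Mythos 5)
Your differential $\omega=\frac{f'_{\mathcal A}(x)}{f_{\mathcal A}(x)}\,dx$ is exactly the one the paper uses (there written as $\sum_{a\in{\mathcal A}}\frac{dx}{x-a}$), and your decomposition $(\omega)=(f'_{\mathcal A}(x))-(f_{\mathcal A}(x))+(dx)$ is a cleaner way to organize the computation than the paper's one-line assertion of the divisor of $\omega$. The treatment of $(f_{\mathcal A}(x))$ and $(f'_{\mathcal A}(x))$ via transversality, and your observation in (b) that separability of $f_{\mathcal A}$ forces $\mathrm{Supp}(M)\cap\mathrm{Supp}(D)=\emptyset$, are both correct, and the reduction to $(dx)=(2g-2)P_{\infty}$ is the right way to see what is really going on.

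That last identity is, however, exactly where the argument stops, and it does not follow from the stated hypotheses. Smoothness of the affine plane curve only says that the gradient of the defining equation is nowhere $(0,0)$; it does not prevent $\partial_y g$ from vanishing at affine points, and wherever $\partial_y g(Q)=0$ the function $x-x(Q)$ fails to be a uniformizer and $dx$ acquires an affine zero. Concretely, $y^2=x^3-x$ over $\mathbb{F}_5$ is a smooth affine plane curve, not a line, with only one place at infinity, yet $dx$ vanishes at the three points with $y=0$ (which have $x$-coordinate outside ${\mathcal A}=\{2,3\}$ and outside $\mathrm{Supp}(M)$), so the displayed formula for $(\omega)$ fails there by direct computation. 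The paper's own proof of Theorem \ref{theorem2} tacitly makes the same unjustified assertion. What saves all of the paper's applications is the extra structure in Lemma \ref{infinity}: for curves $F(y)=H(x)$ with $F'(y)$ a nonzero constant, $\partial_y g=F'(y)$ never vanishes, so $x-x(Q)$ is a uniformizer at every affine point, $dx$ has no affine zeros, and the identity is immediate. To make your proof (and the theorem) correct at the stated generality you should add the hypothesis that $x$ is unramified on the affine part of $C$ (equivalently, $\partial_y g$ is nowhere zero on $C$), or else work directly in the $F(y)=H(x)$, $F'$ constant, setting of Corollary \ref{coro1} where the result is actually used.
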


\begin{proof}

\begin{itemize}

\item[(a)] For all $Q=(a,b)\in {\mathcal P}_{\mathcal A}$ let $x_Q:=x-a$.  In view of the 
choice of ${\mathcal A}$, the image of $x_Q$ at the local ring at $Q$ is a uniformizing parameter. Consider the following  differential form of $\tilde{\chi}_C$:
\[
\omega = \biggl( \sum_{a\in {\mathcal A}} \frac{1}{x-a} \biggr) dx.
\]
Clearly, for any $P=(\alpha,\beta)\in {\mathcal P}_{\mathcal A}$, we have
$$
\omega=\biggl( \sum_{a\in {\mathcal A}} \frac{1}{x_P+\alpha-a} \biggr) dx_P=\frac{f'_{\mathcal A}(x_P+\alpha)}{\prod_{a\in {\mathcal A}} (x_P+\alpha-a)}dx_P.
$$
Therefore $\omega$ has poles at the points of ${\mathcal P}_{\mathcal A}$, which are of order 1 and have residue 1. Since $C$ and $L_a$ are $\Fb$-transversal for every root $a$ of $f_{\mathcal A}$, the associated divisor to $\omega$ is
$$
(\omega)=(\deg(D)+2g-2-\deg(M))P_{\infty} -D+M,
$$
and the result now follows from \cite[Th. 2.72]{Hoholdt}.
\item[(b)] It follows immediately from  (a).
\end{itemize}
\end{proof}

The following corollary (that is straightforward from Theorem \ref{theorem2}) concerns AG codes defined from divisors of type $G=mP_{\infty}$ and yields a range of values of $m$ for which the associated code is self-orthogonal.

\begin{cor}\label{coro}
Assume the notation and hypotheses of Theorem \ref{theorem2} and suppose that $G=mP_{\infty}$ with $m\in \mathbb{N}$. Then
$C(D,G)\subseteq C(D,G)^{\perp}$ if  $2m\leq 2g-2+\deg(D)-\deg(M)$.

\end{cor}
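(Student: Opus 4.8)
The plan is to read the corollary off directly from part~(b) of Theorem~\ref{theorem2}, the only extra ingredient being that the divisor $M$ is effective. First I would check that the disjointness hypothesis required to invoke Theorem~\ref{theorem2} holds in this setting: since $G=mP_\infty$ we have $\mathrm{Supp}(G)\subseteq\{P_\infty\}$, whereas $\mathrm{Supp}(D)=\mathcal{P}_{\mathcal{A}}$ consists only of affine points $(\alpha,\beta)$ with $\alpha\in\mathcal{A}$, so $P_\infty\notin\mathrm{Supp}(D)$ and hence $\mathrm{Supp}(G)\cap\mathrm{Supp}(D)=\emptyset$. Thus Theorem~\ref{theorem2} applies, and by part~(b) it suffices to establish the divisor inequality $2G\leq(2g-2+\deg(D)-\deg(M))P_\infty+M$.

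Next I would record the one fact not already contained in Theorem~\ref{theorem2}: namely $M\geq 0$, and $P_\infty\notin\mathrm{Supp}(M)$. Both are immediate from the definition of $M$ as the divisor of affine zeroes of $f'_{\mathcal{A}}(x)$: writing the divisor of zeros as $(f'_{\mathcal{A}}(x))_0=c_1Q_1+\cdots+c_sQ_s+eP_\infty$ with the $Q_i$ lying in the affine chart, every coefficient $c_i$ is a positive integer, so $M=c_1Q_1+\cdots+c_sQ_s$ is effective and supported on affine points only.

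Finally I would combine these:
\[
(2g-2+\deg(D)-\deg(M))P_\infty+M-2G=(2g-2+\deg(D)-\deg(M)-2m)P_\infty+M,
\]
and under the hypothesis $2m\leq 2g-2+\deg(D)-\deg(M)$ the coefficient of $P_\infty$ on the right is nonnegative while $M\geq 0$, so this divisor is effective. That is exactly the inequality $2G\leq(2g-2+\deg(D)-\deg(M))P_\infty+M$, so part~(b) of Theorem~\ref{theorem2} yields $C(D,G)\subseteq C(D,G)^\perp$. There is essentially no obstacle here; the whole content is the observation that effectiveness of $M$ converts the divisor inequality of Theorem~\ref{theorem2}(b) into the single scalar bound on $m$. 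The only point worth a remark is that $\deg(M)$ need not vanish, so the admissible range of $m$ may be strictly smaller than $g-1+\deg(D)/2$, but this does not affect the argument.
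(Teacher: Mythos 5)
Your proof is correct and follows exactly the route the paper intends: the paper declares the corollary ``straightforward from Theorem~\ref{theorem2}'' without further argument, and what you have written is precisely the spelling-out of that step, namely that $M$ is effective and supported on affine points, so the divisor inequality in Theorem~\ref{theorem2}(b) reduces to the scalar bound on $m$.
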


In the specific case of curves defined by a separable equation $F(y)=H(x)$, the degree of the divisor $M$ mentioned in the statement of Theorem \ref{theorem2} can be explicitly computed from the equation of $C$ and the degree of the polynomial $f'_{\mathcal A}$:

\begin{cor}\label{coro1}
Assume the notation and hypotheses of Corollary \ref{coro} and suppose that $C$ has an equation of the type $F(y)=H(x)$, where $F,H$ are polynomials with coefficients in $\Fb$. Then $\deg(M)=\deg(f'_{\mathcal A})\cdot \deg(F)$. 

Furthermore,
$C(D,G)\subseteq C(D,G)^{\perp}$ if  $$2m\leq 2g-2+\deg(D)-\deg(f'_{\mathcal A})\cdot \deg(F).$$
\end{cor}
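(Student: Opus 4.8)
The plan is to deduce Corollary \ref{coro1} from Corollary \ref{coro} by computing $\deg(M)$ explicitly when $C$ has an equation of the form $F(y)=H(x)$. Since the self-orthogonality bound in Corollary \ref{coro} reads $2m\le 2g-2+\deg(D)-\deg(M)$, it suffices to prove the single identity $\deg(M)=\deg(f'_{\mathcal A})\cdot\deg(F)$; the final inequality is then a literal substitution.

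First I would recall how $M$ was defined: writing the divisor of zeros of the rational function $f'_{\mathcal A}(x)$ on $\tilde{\chi}_C$ as $(f'_{\mathcal A}(x))_0 = c_1Q_1+\cdots+c_sQ_s+mP_\infty$, one sets $M=(f'_{\mathcal A}(x))_0-mP_\infty$, the ``affine part'' of the zero divisor. The key observation is that $f'_{\mathcal A}(x)$, viewed as a function on $\tilde{\chi}_C$, is the pullback under the degree-$\deg(F)$ map $x:\tilde{\chi}_C\to\mathbb{P}^1$ of the polynomial function $f'_{\mathcal A}$ on the affine $x$-line. Each affine root $a$ of $f'_{\mathcal A}$ (counted with multiplicity, so $\deg(f'_{\mathcal A})$ roots in $\Fbc$ in total) pulls back to the fiber $x^{-1}(a)$; summing the local intersection contributions over all points of all such fibers gives exactly $\deg(f'_{\mathcal A})\cdot\deg(F)$ by the fundamental fact that the divisor of zeros of $x-a$ on a curve, restricted to the affine chart, has degree equal to $\deg(x)=\deg(F)$ — this last equality being where the shape $F(y)=H(x)$ enters, since it forces $[\Fb(\chi):\Fb(x)]=\deg_y F=\deg(F)$. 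Thus $\deg M = \sum_{a}\#\{\text{weighted points over }a\} = \deg(f'_{\mathcal A})\cdot\deg(F)$.

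The one point demanding care is the interaction with $P_\infty$: a priori some roots of $f'_{\mathcal A}$ could ``disappear'' into $P_\infty$ or the counting could be corrupted by ramification of $x$ over the affine line. The cleanest way around this is to argue at the level of degrees directly: $\divvz(f'_{\mathcal A}(x))$ has total degree equal to $\deg(f'_{\mathcal A}(x))_\infty$, and since $f'_{\mathcal A}$ has all its poles at infinity, $(f'_{\mathcal A}(x))_\infty$ is supported on $P_\infty$ with $\nu_{P_\infty}$-multiplicity $\deg(f'_{\mathcal A})\cdot(-v_{P_\infty}(x))$; because $C$ has one place at infinity and equation $F(y)=H(x)$, one has $-v_{P_\infty}(x)=\deg(F)/\gcd$-type considerations that in fact give $-v_{P_\infty}(x)$ times the number of points... — more simply, $\deg(f'_{\mathcal A}(x))_0=\deg(f'_{\mathcal A}(x))_\infty=\deg(f'_{\mathcal A})\cdot\deg_{\tilde\chi_C}(x)=\deg(f'_{\mathcal A})\cdot\deg(F)$. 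Since by definition $\deg M = \deg(f'_{\mathcal A}(x))_0 - m = \deg(f'_{\mathcal A}(x))_0 - \nu_{P_\infty}\big((f'_{\mathcal A}(x))_0\big)$, I must separately check that $f'_{\mathcal A}(x)$ has \emph{no zero} at $P_\infty$, equivalently $v_{P_\infty}(f'_{\mathcal A}(x))<0$; this holds because $x$ has a pole at $P_\infty$ (the point at infinity of a nonconstant plane curve) and $f'_{\mathcal A}$ is a nonzero polynomial, so $f'_{\mathcal A}(x)$ also has a pole there, giving $m=0$ in the decomposition and hence $\deg M = \deg(f'_{\mathcal A})\cdot\deg(F)$.

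The main obstacle, then, is pinning down $\deg_{\tilde\chi_C}(x)$, the degree of $x$ as a map $\tilde\chi_C\to\mathbb{P}^1$, and confirming it equals $\deg(F)=\deg_y F$. This is where separability of $F(y)=H(x)$ is used: $\Fb(\chi)=\Fb(x)[y]/(F(y)-H(x))$ is an extension of $\Fb(x)$ of degree equal to the $y$-degree of $F(y)-H(x)$, namely $\deg(F)$, provided $F(y)-H(x)$ is irreducible over $\Fb(x)$ — which follows from $C$ being geometrically irreducible (part of ``one place at infinity''). Once $\deg_{\tilde\chi_C}(x)=\deg(F)$ is established, everything else is the routine degree bookkeeping sketched above, and the stated inequality follows by substituting $\deg(M)=\deg(f'_{\mathcal A})\cdot\deg(F)$ into Corollary \ref{coro}.
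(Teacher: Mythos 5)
Your argument is correct but reaches $\deg(M)=\deg(f'_{\mathcal A})\cdot\deg(F)$ by a genuinely different route from the paper. The paper's proof is purely local and never leaves the affine chart: it factors $f'_{\mathcal A}(z)=\delta\prod_i(z-a_i)^{k_i}$, identifies $\supp(M)$ with the points $Q_{i,j}=(a_i,b_{j,i})$ where $F(y)-H(a_i)=\prod_j(y-b_{j,i})^{\gamma_{j,i}}$, computes $\nu_{Q_{i,j}}(f'_{\mathcal A}(x))=k_i\gamma_{j,i}$ by intersection multiplicities, and sums, using $\sum_j\gamma_{j,i}=\deg(F)$. Your proof is global: you compute $\deg(f'_{\mathcal A}(x))_0=\deg(f'_{\mathcal A}(x))_\infty=\deg(f'_{\mathcal A})\cdot[\Fb(\chi):\Fb(x)]$ on the complete curve $\tilde{\chi}_C$, pin down $[\Fb(\chi):\Fb(x)]=\deg(F)$ from irreducibility of $F(y)-H(x)$ over $\Fb(x)$ (Gauss's lemma plus geometric irreducibility), and then make the extra short check that $P_\infty$ is not a zero of $f'_{\mathcal A}(x)$, so that $M$ is the whole zero divisor---a step the paper never needs because it counts directly over affine points. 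The fact $[\Fb(\chi):\Fb(x)]=\deg(F)$ is the crux of both arguments, encoded in the paper as $\sum_j\gamma_{j,i}=\deg F$ and in yours at the function-field level. Your version is cleaner and would transfer to non-plane models once $\deg(x)$ is known; the paper's version is more elementary and self-contained. One small imprecision to fix: when $f'_{\mathcal A}$ is a nonzero constant, your claim that $f'_{\mathcal A}(x)$ has a pole at $P_\infty$ is false (it is a unit), but then $M=(f'_{\mathcal A}(x))_0=0$ trivially and the identity $\deg(M)=0\cdot\deg(F)$ still holds; treat that case separately rather than deriving it from ``$f'_{\mathcal A}$ is a nonzero polynomial.''
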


\begin{proof}

Let $a_1,\ldots,a_r\in \Fbc$ be the distinct roots of the polynomial $f'_{\mathcal A}(z)$ and consider the decomposition $f'_{\mathcal A}(z)=\delta\sum_{i=1}^r (z-a_i)^{k_i}$, $\delta\in \Fbc\setminus \{0\}$. For each $i=1,\ldots,r$, let $b_{1,i},\ldots,b_{s_i,i}$ be the different roots of $F(y)-H(a_i)$ and consider the decomposition
$$
F(y)-H(a_i)=\prod_{j=1}^{s_i} (y-b_{j,i})^{\gamma_{j,i}}.
$$ 
Notice that the points in the support of $M$ are those in the set $\{Q_{i,j}:=(a_i,b_{j,i})\}_{1\leq i\leq r;\; 1\leq j\leq s_i}$.

The coefficient in $M$ of  one of the points $Q_{i,j}$ is $\nu_{Q_{i,j}}(f'_{\mathcal A}(x))$, where $\nu_{Q_{i,j}}$ is the valuation defined by the curve $C$ at $Q_{i,j}$; then
$$
\nu_{Q_{i,j}}(f'_{\mathcal A}(x))=k_i\cdot I_{Q_{i,j}}(F(y)-H(x),f'_{\mathcal A}(x))=k_i\cdot \gamma_{j,i}\cdot I_{Q_{i,j}}(y-b_{j,i},x-a_i)=k_i\cdot \gamma_{j,i},
$$
therefore
$$
\deg(M)=\sum_{i=1}^r k_i\sum_{j=1}^{s_i}  \gamma_{j,i}=\sum_{i=1}^r k_i\deg(F)=\deg(f'_{\mathcal A})\cdot \deg(F).
$$

The last part of the statement follows from Corollary \ref{coro}.
\end{proof}

\begin{rem}
{\rm
In practice, the main difficulty in applying Corollary \ref{coro1}
is that the polynomial $f_{\mathcal A}(z)$ and its derivative 
need to be known and can be hard to compute.
We give an example of this now. 

\begin{exam}\label{exam:max}
The curve $y^3-y=x^2-x^{10}$ 
has $1215$ affine rational points over $\mathbb{F}_{3^6}$.
The polynomial $f_{\mathcal A}(z)$ can be computed using MAGMA and has degree 405.
Furthermore, its derivative has degree 324.
Applying Corollary \ref{coro1} gives self-orthogonal curves for $m$ in the range
$17 \le m \le 129$.
\end{exam}

This is an interesting example because the curve is maximal (recall that a curve defined over $\mathbb{F}_q$
of genus $g$ is maximal over $\mathbb{F}_q$
if the number of projective $\mathbb{F}_q$-rational points  is equal to 
$q+1+2g\sqrt{q}$, see \cite{AFF-HS} ). 
Maximal curves are desirable in coding theory because the length of the corresponding codes
is very good.

We are unable to compute $f_{\mathcal A}(z)$ by hand in this example.
In the next sections we will give some infinite families of curves where we are able to compute $f_{\mathcal A}(z)$ by hand.
}
\end{rem}

Next we present a special case of  Corollary \ref{coro1}, where the range of values of $m$ for which the codes $C(D,mP_{\infty})$ are self-orthogonal depends only on the genus of $C$ and $\deg(F)$.
This bound can be used when   $f_{\mathcal A}(z)$ is not known.

\begin{cor}\label{coro2}
Assume the notation and hypotheses of Corollary \ref{coro} and suppose that $C$ has an equation of the type $F(y)=H(x)$, where $F,H$ are polynomials with coefficients in $\Fb$. Then
$C(D,G)\subseteq C(D,G)^{\perp}$ if  $$2m\leq 2g-2+\deg (F).$$ 
\end{cor}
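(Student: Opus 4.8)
The plan is to deduce Corollary \ref{coro2} from Corollary \ref{coro1} by bounding the quantity $\deg(f'_{\mathcal A})\cdot \deg(F)$ that appears in the self-orthogonality bound of Corollary \ref{coro1}, replacing it with something depending only on $g$ and $\deg(F)$. Since $f_{\mathcal A}(z)=\prod_{a\in{\mathcal A}}(z-a)$ has degree $\#{\mathcal A}$, its derivative has degree at most $\#{\mathcal A}-1$, and moreover $\#{\mathcal A}$ is exactly the number of lines $L_a$ (with $a\in\Fb$) that meet $C$ $\Fb$-transversally. So the task reduces to: first, relate $\deg(D)$ to $\#{\mathcal A}$ and $\deg(F)$; second, show that the resulting inequality $2m\le 2g-2+\deg(F)$ is implied by $2m\le 2g-2+\deg(D)-\deg(f'_{\mathcal A})\deg(F)$.

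First I would observe that for each $a\in{\mathcal A}$ the line $L_a$ meets $C$ in exactly $\deg(F)$ distinct $\Fb$-rational points: transversality at each intersection point and the hypothesis of $\Fb$-rationality force $\#(C(\Fbc)\cap L_a(\Fbc))=\deg(F)$ because, for the equation $F(y)=H(x)$, the fiber over $x=a$ is cut out by $F(y)=H(a)$, a polynomial of degree $\deg(F)$ in $y$, and transversality means this polynomial has $\deg(F)$ simple roots. Hence the divisor $D=\sum_{P\in{\mathcal P}_{\mathcal A}}P$ has $\deg(D)=\#{\mathcal A}\cdot\deg(F)$. Next, $\deg(f'_{\mathcal A})\le \#{\mathcal A}-1$, so $\deg(f'_{\mathcal A})\cdot\deg(F)\le(\#{\mathcal A}-1)\deg(F)=\deg(D)-\deg(F)$. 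Substituting into the bound from Corollary \ref{coro1},
$$
2g-2+\deg(D)-\deg(f'_{\mathcal A})\cdot\deg(F)\ \ge\ 2g-2+\deg(D)-\bigl(\deg(D)-\deg(F)\bigr)=2g-2+\deg(F),
$$
so any $m$ with $2m\le 2g-2+\deg(F)$ automatically satisfies $2m\le 2g-2+\deg(D)-\deg(f'_{\mathcal A})\cdot\deg(F)$, and Corollary \ref{coro1} then gives $C(D,G)\subseteq C(D,G)^{\perp}$.

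The main obstacle, such as it is, is justifying the equality $\deg(D)=\#{\mathcal A}\cdot\deg(F)$ and the bound $\deg(f'_{\mathcal A})\le\#{\mathcal A}-1$ carefully, since one must be sure that $f_{\mathcal A}$ is a separable polynomial of degree exactly $\#{\mathcal A}$ — which it is, being a product of distinct monic linear factors $(z-a)$ over $a\in{\mathcal A}\subseteq\Fb$ — so that its formal derivative is nonzero only when $\#{\mathcal A}\ge 1$; if ${\mathcal A}=\emptyset$ the statement is vacuous since then $D=0$ and there are no evaluation points. One should also note the edge case where $\car(\Fb)\mid \#{\mathcal A}$, in which $\deg(f'_{\mathcal A})$ can be strictly smaller than $\#{\mathcal A}-1$; this only strengthens the inequality, so the bound still holds. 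With these routine verifications in place the corollary follows immediately from Corollary \ref{coro1}.
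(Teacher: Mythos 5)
Your proposal is correct and follows essentially the same route as the paper: both establish $\deg(D)=\#{\mathcal A}\cdot\deg(F)$ (you via fiber counting, the paper via intersection multiplicities, but these are the same computation) and then bound $\deg(f'_{\mathcal A})\le\#{\mathcal A}-1$ to reduce the inequality in Corollary \ref{coro1} to $2m\le 2g-2+\deg(F)$. Your attention to the edge cases (${\mathcal A}=\emptyset$ and $\car(\Fb)\mid\#{\mathcal A}$) is a minor but welcome addition the paper leaves implicit.
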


\begin{proof}
First we will prove that $\deg(D)=\#{\mathcal A}\cdot \deg (F)$. Notice that $\deg(D)$ coincides with the cardinality of ${\mathcal P}_{\mathcal A}$; hence it is enough to show that $\#{\mathcal P}_{\{a\}}=\deg(F)$ for every $a\in {\mathcal A}$. For this purpose, notice that $I_P(C,L_a)=1$ for all $P\in {\mathcal P}_{\{a\}}$ because $C$ and $L_a$ are transversal. Then
\begin{align*}
\#{\mathcal P}_{\{a\}}&=\sum_{P\in {\mathcal P}_{\{a\}}} I_P(C,L_a)=\sum_{P\in {\mathcal P}_{\{a\}}} I_P(F(y)-H(x),x-a)\\
&=\sum_{P=(a,b)\in {\mathcal P}_{\{a\}}} I_P(F(y)-H(a),x-a)\\
&=\sum_{P=(a,b)\in {\mathcal P}_{\{a\}}} I_P(y-b,x-a)=\deg(F),
\end{align*}

where the last two equalities are deduced from the fact that $C$ and $L_a$ are $\Fb$-transversal.

Finally, the result follows from 
\begin{align*}
\deg(M)&= \deg(f'_{\mathcal A})\cdot \deg(F)\\
&\leq (\deg(f_{\mathcal A})-1)\cdot\deg(F)=(\#{\mathcal A}-1)\cdot\deg(F)=\deg(D)-\deg(F),
\end{align*}
where the first equality is consequence of Corollary \ref{coro1}.
\end{proof}

\begin{rem}
{\rm
There are examples where this bound is tight, in the sense that 
$C(D,G)\subseteq C(D,G)^{\perp}$ when  $2m\leq 2g-2+\deg (F)$,
and $C(D,G)\not\subseteq C(D,G)^{\perp}$ for the smallest $m$ with $2m> 2g-2+\deg (F).$
One example is 
$y^{27}-y=x^2$ over $\mathbb{F}_{3^6}$, 
the number of rational points is $N=1431+1$. 
The derivative $f'_{\mathcal A}(z) = 2z^{52} + 1$ so it is not constant.
The genus is $13$ and $\deg (F)=27$
so $2m \le 2g-2+\deg (F)$ becomes $m\le 25$.
We confirm with MAGMA that for  $1<m\le  25$  we have that $C(D,G)\subseteq C(D,G)^{\perp}$
but not for $m=26$. 
}
\end{rem}

To finish this section, we prove that the AG codes coming from Corollary \ref{coro1} arise from weak Castle curves.

\begin{prop}\label{wc}
If $C$ is a curve satisfying the hypotheses of Corollary \ref{coro1} then the pointed curve $(\tilde{\chi}_C,P_{\infty})$ is weak Castle.

\end{prop}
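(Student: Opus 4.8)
The plan is to verify directly the two conditions defining a weak Castle curve for the pointed curve $(\tilde{\chi}_C,P_\infty)$: that the Weierstra\ss\ semigroup $\Gamma(P_\infty)$ is symmetric (condition~(1)), and that a morphism of the kind demanded by condition~(2') exists.

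For condition~(2') I would take the obvious candidate $\varphi:=x$, regarded as a morphism $\tilde{\chi}_C\to\overline{\Fb}\cup\{\infty\}$. Since $C$ has only one place at infinity, $x$ is regular on $C=\tilde{\chi}_C\setminus\{P_\infty\}$ and has $P_\infty$ as its only pole, so $(\varphi)_\infty=\ell P_\infty$ with $\ell=-v_{P_\infty}(x)=[\Fb(\tilde{\chi}_C):\Fb(x)]=\deg(F)$; this last equality is exactly the fibre count $\#\mathcal{P}_{\{a\}}=\deg(F)$ established in the proof of Corollary~\ref{coro2}. For every $a\in\mathcal{A}$ one has $\varphi^{-1}(a)=\mathcal{P}_{\{a\}}$, and $\Fb$-transversality of $C$ and $L_a$ forces this fibre to be a set of $\deg(F)=\ell$ distinct $\Fb$-rational points. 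Enumerating $\mathcal{A}$ as $a_1,\dots,a_r$ (a non-empty list, since Corollary~\ref{coro1} concerns nontrivial codes $C(D,mP_\infty)$, i.e.\ $D\neq0$) then yields condition~(2').

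For the symmetry of $\Gamma(P_\infty)$ I would invoke the standard consequence of the Riemann--Roch theorem that, for a rational point $P$ on a curve of genus $g$, the semigroup $\Gamma(P)$ is symmetric if and only if $(2g-2)P$ is a canonical divisor; it then suffices to produce a canonical divisor of $\tilde{\chi}_C$ supported only at $P_\infty$. Here I would use that $C$ is a plane curve: let $\chi_C\subset\mathbb{P}^2$ be its projective closure, of degree $d$, and $\pi_C\colon\tilde{\chi}_C\to\chi_C$ the normalization. Smoothness of $C$ means $\chi_C$ is singular at most at its unique point $Q_\infty$ at infinity, whose only preimage is $P_\infty$, so the conductor of $\pi_C$ is a multiple of $P_\infty$; and $\chi_C$ meets the line at infinity $L_\infty$ only at $Q_\infty$, so the divisor $\pi_C^{*}(L_\infty)$ is effective, supported at $P_\infty$, and of degree $d$, hence equals $d\,P_\infty$. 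The adjunction formula for plane curves then expresses a canonical divisor of $\tilde{\chi}_C$ as $\pi_C^{*}\bigl((d-3)L_\infty\bigr)$ minus the conductor of $\pi_C$; since both terms are integer multiples of $P_\infty$, so is this canonical divisor, and comparing degrees forces it to equal $(2g-2)P_\infty$. (Alternatively one may stay inside the paper's framework: $f_{\mathcal{A}}(x)$ and $f'_{\mathcal{A}}(x)$ are regular on $C$ with divisors $D-(\deg D)P_\infty$ and $M-(\deg M)P_\infty$, so $D\sim(\deg D)P_\infty$ and $M\sim(\deg M)P_\infty$; inserting these into the expression for $(\omega)$ in the proof of Theorem~\ref{theorem2}(a) gives $(\omega)\sim(2g-2)P_\infty$, whence $(2g-2)P_\infty$ is again canonical.)

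I do not anticipate a genuine obstacle: condition~(2') is immediate once $\varphi=x$ is chosen, and the remaining divisor- and fibre-cardinality bookkeeping is already carried out in the proofs of Theorem~\ref{theorem2} and Corollary~\ref{coro2}. The one point that is less mechanical is the symmetry of $\Gamma(P_\infty)$; it rests on the equivalence ``symmetric $\iff(2g-2)P_\infty$ canonical'' and, for a plane curve with a single place at infinity, on the adjunction formula together with the observation that both the conductor and the pullback of the line at infinity are concentrated at $P_\infty$.
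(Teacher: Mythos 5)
Your proof is correct, and for condition (2') it follows the same route as the paper: both take $\varphi=x$, identify its pole divisor as $\ell P_\infty$, and use $\Fb$-transversality of $C$ and $L_a$ to conclude that each fibre $\varphi^{-1}(a)=\mathcal{P}_{\{a\}}$ (for $a\in\mathcal{A}$) consists of $\ell$ distinct $\Fb$-rational points. (The paper obtains $\ell$ intrinsically as $\rho-\eta_0$ via intersection multiplicities at $Q_\infty$ rather than as $[\Fb(\tilde{\chi}_C):\Fb(x)]=\deg F$; these agree.) Where you genuinely diverge is on condition (1). The paper does not argue the symmetry of $\Gamma(P_\infty)$ itself: it invokes \cite[Prop. 3(2)]{Quantum}, which for a curve $F(y)=H(x)$ with one place at infinity gives $\Gamma(P_\infty)=\langle\deg F,\deg H\rangle$, a numerical semigroup generated by two coprime integers and hence automatically symmetric. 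You instead show directly that $(2g-2)P_\infty$ is canonical, first by adjunction on the plane model (both $\pi_C^*(L_\infty)$ and the conductor of $\pi_C$ are concentrated at $P_\infty$), and then by a neater internal argument: the differential $\omega$ from the proof of Theorem~\ref{theorem2}(a) has $(\omega)=(\deg D+2g-2-\deg M)P_\infty-D+M$, and since $(f_{\mathcal{A}}(x))=D-(\deg D)P_\infty$ and $(f'_{\mathcal{A}}(x))=M-(\deg M)P_\infty$, one gets $(\omega)\sim(2g-2)P_\infty$. Both of your arguments are valid; the adjunction one does not even use the shape $F(y)=H(x)$ and so is more general than required, while the paper's citation is shorter and rests on the explicit description of the Weierstra\ss{} semigroup for these curves. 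In short: same method for (2'), a genuinely different and more self-contained method for (1), both correct.
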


\begin{proof}
Assume the notation of Theorem \ref{theorem2} and suppose, without loss of generality, that $0\in {\mathcal A}$. 

Consider an arbitrary element $a\in {\mathcal A}$ and the divisor $(x-a)$ of the rational function $x-a$. Since $L_a$ and $C$ are $\Fb$-transversal one has that ${\mathcal P}_{\{a\}}\subseteq \tilde{\chi}_C(\Fb)$ and
$$(x-a)=\sum_{P\in {\mathcal P}_{\{a\}}}P-(\rho-\eta_a)P_{\infty},$$
where $\rho:=I_{Q_{\infty}}(L_{\infty},\chi_C)$ and, for every $a\in {\mathcal A}$, $\eta_a$ equals 
$I_{Q_{\infty}}(\chi_{L_a},\chi_C)$ 
if $Q_{\infty}$ belongs to $\chi_{L_a}$, and 0 otherwise. 

Notice that, independently of $a\in {\mathcal A}$, the point $Q_{\infty}$ belongs to $\chi_{L_a}$ if and only if $Q_{\infty}=(0:1:0)$; moreover, in this case, $I_{Q_{\infty}}(\chi_{L_a},\chi_C)$ equals ${\rm mult}_{Q_{\infty}}(\chi_C)$ (the multiplicity of $\chi_C$ at $Q_{\infty}$) because the line $L_a$ is not tangent to $\chi_C$ at $Q_{\infty}$ (notice that $C$ is not a line). This shows that the value $\eta_a$ does not depend on $a$ and that $\rho-\eta_a> 0$. Therefore
$$(x-a)_0=\sum_{P\in {\mathcal P}_{\{a\}}}P\;\;\mbox{ and }\;\; (x-a)_{\infty}=(\rho-\eta_0)P_{\infty}.$$
In particular, $\#{\mathcal P}_{\{a\}}=\rho-\eta_0$.

Now, consider the  morphism $f:\tilde{\chi}_C\rightarrow \mathbb{P}^1$ associated with the rational function defined by $x$. 
From the previous paragraphs, it holds that $(f)_{\infty}=(\rho-\eta_0)P_{\infty}$ and, for all 
$a\in {\mathcal A}$, $f^{-1}(a)={\mathcal P}_{\{a\}}\subseteq \tilde{\chi}_C(\Fb)$ and $\#f^{-1}(a)=\rho-\eta_0$. Hence, taking into account \cite[Prop. 3 (2)]{Quantum}, the pointed curve $(\tilde{\chi}_C,P_{\infty})$ is weak Castle.
\end{proof}

\begin{rem}\label{comment}
{\rm
We would like to comment on how our results differ from the results in \cite{Quantum} and \cite{G}. All the families of curves in \cite{Quantum} satisfy the hypotheses of Lemma 2 in that paper. 
Under the assumptions (and notation) of Theorem \ref{theorem2}, the pointed curve $(\tilde{\chi}_C,P_{\infty})$ satisfies the hypotheses of \cite[Lemma 2]{Quantum} if and only if the polynomial $f'_{\mathcal A}(z)$ is a nonzero constant (if and only if the divisor $M$ in Theorem \ref{theorem2} is  the zero divisor).
In this paper we will present some families with non-constant derivative, which are the
first of this kind as far as we are aware.

To emphasize this point, we partition the curves satisfying the hypotheses of 
Theorem \ref{theorem2} into two types: \\
Type I: those where $f'_{\mathcal A}(z)$ is a nonzero constant.\\
Type II: those where $f'_{\mathcal A}(z)$ is not constant.

The curves in  \cite{Quantum} are of Type I and many of the codes introduced in our paper
come from curves of Type II.  
Therefore, we are presenting a new type of code.
By Proposition \ref{wc} both types of  curves are weak Castle.
Most of the Type II curves in this paper are not Castle, as we will see.

The curves provided in \cite{G} are either of Type I or are not one-point AG codes.
All codes in our paper are one-point AG codes, and hence our results and examples
are different from \cite{G}.
Also, all the sets $ {\mathcal A}$  in \cite{G} are multiplicative subgroups after removing 0.
}
\end{rem}

\subsection*{Families of self-orthogonal AG codes}\label{sect4}

The aim of this subsection is to provide 
a lemma which will allow us to obtain several families of curves satisfying the hypotheses of Corollary \ref{coro1} and, therefore, to obtain families of self-orthogonal AG codes.

\begin{lemma}\label{infinity}
Let $\Fb$ be a finite field of characteristic $p$ and let $C$ be an affine plane curve over $\Fb$ with equation 
$$F(y)=H(x),$$
where $F$ and $H$ are polynomials with coefficients in $\Fb$ such that $F'(y)$ is a nonzero constant and $\gcd(\deg(H),p)=1$. Then 
\begin{itemize}
\item[(a)] $C$ is smooth.
\item[(b)] If $\deg(H)>\deg(F)$ or $H(x)=x^{\ell}$ with $\ell\in \mathbb{N}$ such that $\ell<\deg(F)$ and moreover $\gcd(\deg(F),\ell)=1$, then $C$ has only one place at infinity.
\item[(c)] The genus of $\tilde{\chi}_C$ is $\frac{1}{2}(\deg(F)-1)(\deg(H)-1)$.
\end{itemize}
\end{lemma}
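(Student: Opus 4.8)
The plan is to analyze the curve $C: F(y) = H(x)$ locally, treating the affine smoothness, the structure at infinity, and the genus separately, since each part uses a different tool. Throughout I would write $d_F := \deg(F)$ and $d_H := \deg(H)$, and I would use the hypothesis $F'(y) = c$ a nonzero constant, which forces $\mathrm{char}(\Fb) \mid d_F$ unless $d_F = 1$ (in which case $C$ is a rational curve and everything is trivial), so I would tacitly assume $d_F \geq 2$.

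For part (a), I would consider the defining polynomial $g(x,y) = F(y) - H(x)$ and check the Jacobian criterion at every point $Q = (\alpha,\beta) \in C(\Fbc)$. We have $\partial g/\partial y = F'(y) = c \neq 0$ everywhere, so no affine point of $C$ can be singular; hence $C$ is smooth. (This is the only place the ``$F'$ is a nonzero constant'' hypothesis does the heavy lifting for smoothness; the condition $\gcd(d_H,p)=1$ is not needed here.)

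For part (b), the task is to show there is exactly one place of $\tilde\chi_C$ over the line at infinity $L_\infty$, that it is $\Fb$-rational, and that the single branch is defined over $\Fb$. Homogenizing $g$ with a variable $Z$, the points at infinity are the solutions of the leading-form equation; I would separate the two cases. In the case $d_H > d_F$: the highest-degree part of $g(x,y)$ is (up to a constant) the top term of $-H(x)$, namely a nonzero multiple of $x^{d_H}$, so the only point at infinity is $Q_\infty = (0:1:0)$. To see there is a single branch defined over $\Fb$, I would pass to the chart $Y = 1$, writing $u = X/Y$, $v = Z/Y$, obtaining a local equation of the form $v^{\,d_F - d_H}(\text{unit}) = u^{d_H}(\text{unit}) + \cdots$ near $(u,v) = (0,0)$; since $\gcd(d_H, p) = 1$ and (after using $d_H > d_F$) the Newton polygon has a single relevant side whose slope has numerator coprime to $p$, Hensel/Newton–Puiseux applied over $\Fb$ yields a single place with ramification index coprime to $p$ and residue field $\Fb$. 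In the case $H(x) = x^\ell$ with $\ell < d_F$ and $\gcd(d_F,\ell) = 1$: now the leading form of $g$ is $F$'s top term, a multiple of $y^{d_F}$, so again the unique point at infinity is $Q_\infty = (0:1:0)$; working in the chart $Y=1$ the local equation becomes (up to units) $v^{\,d_F - \ell} = u^\ell + (\text{higher order})$, and $\gcd(d_F - \ell, \ell) = \gcd(d_F,\ell) = 1$ makes this a one-branch singularity (a single Puiseux series) with the branch defined over $\Fb$ and $P_\infty$ rational. In both cases $C$ is not a line since $d_F \geq 2$, so Definition \ref{oneplace} is satisfied.

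For part (c), I would compute the genus via the Riemann–Hurwitz formula applied to the map $x : \tilde\chi_C \to \mathbb{P}^1$ (the projection to the $x$-line), which has degree $d_F$. Over a point $x = \alpha \in \Fbc$ the fiber is cut out by $F(y) = H(\alpha)$, a polynomial of degree $d_F$ in $y$ whose derivative is the nonzero constant $c$; hence $F(y) - H(\alpha)$ is separable for every $\alpha$, so the map $x$ is unramified over the entire affine line and, by the tameness from $\gcd(d_H,p)=1$ (equivalently the one-place-at-infinity structure of (b)), tamely ramified over $\infty$ with a single place $P_\infty$ of ramification index $d_F$ (when $d_H > d_F$, where $e = d_H$ would instead... — here I must be careful: if $d_H > d_F$ it is $y$ that should be taken as the base, or equivalently one reads the ramification of $x$ at $P_\infty$ as $d_H$ with a fiber of size one; I would choose whichever of the two projections has the cleaner ramification and symmetrize). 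Cleanest is: Riemann–Hurwitz for $x$ of degree $d_F$ gives $2g - 2 = d_F(-2) + (\text{different})$, and the only contribution to the different is at $P_\infty$ with tame index, contributing $d_F - 1$ from... — to get the stated genus $\tfrac12(d_F-1)(d_H-1)$ the different at infinity must have degree $(d_F - 1) + \text{(wild-looking but actually tame)}\, (d_F)(d_H - d_F - 1)$ or, more transparently, I would instead invoke the standard genus formula for a smooth plane-like model / the Newton-polygon genus formula for curves $F(y) = H(x)$ with $F', $ constant: namely the genus equals the number of interior lattice points of the Newton polygon of $g$, which for the polygon with vertices $(0,0), (d_H, 0), (0, d_F)$ is exactly $\tfrac{(d_F-1)(d_H-1)}{2}$ by Pick's theorem once one checks (using $\gcd(d_H,p)=1$ and the one-place condition) that the curve has no singularities off the coordinate hyperplanes and the expected singularity behaviour at infinity, so that the Newton-polygon bound is attained. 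I would carry out the Riemann–Hurwitz bookkeeping explicitly rather than cite the lattice-point formula if the referee prefers self-containedness.

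The main obstacle is part (b), and within it the genus computation (c) insofar as it depends on pinning down the exact ramification/singularity data at infinity: one has to verify rigorously that there is genuinely a \emph{single} branch over $Q_\infty$ and compute its contribution to the different (or to the conductor/delta-invariant) correctly in both sub-cases of the hypothesis. The coprimality conditions $\gcd(d_H,p)=1$ and $\gcd(d_F,\ell)=1$ are exactly what make the relevant Newton polygon edge have a primitive slope with $p$-coprime denominator, guaranteeing one tamely-ramified place; making this precise — e.g. via an explicit Puiseux expansion over $\Fb$ at $Q_\infty$, or via Hensel's lemma after a suitable substitution — is the technical heart of the argument, whereas (a) is immediate and the rest of (c) is routine Riemann–Hurwitz.
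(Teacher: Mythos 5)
Part (a) is handled exactly as in the paper: both you and the authors observe $\partial_y(F(y)-H(x))=F'(y)$ is a nonzero constant, so the Jacobian criterion rules out affine singularities. Fine.

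For part (b) your plan diverges from the paper's and, as written, contains a concrete error. When $H(x)=x^{\ell}$ with $\ell<\deg F=:m$, the homogenization of $F(y)-x^{\ell}$ is $a_0Y^m+a_1Y^{m-1}Z+\cdots+a_mZ^m-X^{\ell}Z^{m-\ell}$, and setting $Z=0$ forces $Y=0$; the unique point at infinity is $(1:0:0)$, not $(0:1:0)$, so the right chart is $X\neq 0$ (with $u=Y/X$, $v=Z/X$), not $Y=1$. In that chart the local equation is $h(u,v)=v^{m-\ell}$ with $h$ homogeneous of degree $m$ and $h(1,0)\neq 0$; the tangent cone is $v^{m-\ell}=0$, and the paper then resolves the singularity by an explicit chain of quadratic transformations, using $\gcd(\ell,m-\ell)=1$ in a Euclidean-algorithm fashion to see that at each step the proper transform meets the exceptional line in a single $\Fb$-rational point, ending in a single branch. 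Your displayed equation $v^{d_F-\ell}=u^{\ell}+\cdots$ does not match the untouched local equation; it looks like the post-blowup equation with $u$ and $v$ roles scrambled. As written the computation does not go through. In Case 1 ($\deg H>\deg F$) the paper does not use Newton polygons at all: it invokes Abhyankar's one-place-at-infinity criterion through a resultant computation (Campillo--Farr\'an Prop.~3.5), which simultaneously delivers geometric irreducibility. Your Newton-polygon / Hensel sketch is a genuinely different route, but the crucial step --- that the single Newton edge from $(0,\deg H-\deg F)$ to $(\deg H,0)$ carries exactly one place --- is asserted rather than proved; the gcd of the edge's endpoints is $\gcd(\deg F,\deg H)$, which the hypotheses do not make $1$ in general, so the ``single side, coprime-to-$p$ slope'' slogan by itself does not give uniqueness of the branch. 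You would need the finer Newton--Puiseux analysis or the Abhyankar criterion to close this. Finally, note that Definition~\ref{oneplace} also requires geometric irreducibility of $C$; the paper proves this (explicitly in Case 2 by a contradiction argument, implicitly in Case 1 via Abhyankar), but your plan never addresses it.

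For part (c) the paper simply quotes a genus formula from the literature (\cite[Prop.~3]{Quantum}); you instead propose Riemann--Hurwitz for the $x$-projection, get stuck on the contribution at infinity, and fall back on the Newton-polygon lattice-point count. Either route can be made to work, but as presented neither is carried out, and the RH accounting you start requires exactly the detailed knowledge of the unique place at infinity that your part (b) has not yet established. In short: (a) matches the paper; (b) Case 2 has a wrong point at infinity and wrong chart; (b) Case 1 is a different but incomplete approach and omits irreducibility; (c) is unfinished.
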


\begin{proof}
Statement (a) is obvious, since the partial derivative with respect to $y$ of the defining equation of $C$ is a nonzero constant. We split the proof of (b) in two cases:\medskip

\emph{Case 1: $\deg(H)>\deg(F)$}. In this case, $(0:1:0)$ is the unique intersection point of $\chi_C$ and the line at infinity. Set $L:=F(y)-H(x)$, $F_0:=y$, $F_1:=x$, $\delta_0=d_1:=\deg(H)$, $\delta_1:=\deg_y\; Res_x(L,F_1)$ and $d_2:=\gcd(\delta_0,\delta_1)$, where $Res_x(L,F_1)$ denotes the resultant (with respect to $x$) of $L$ and $F_1$.

It is easily checked that $Res_x(L,F_1)=\pm(F(y)- H(0))$. Therefore, $\delta_1=\deg(F)$ and $d_2=1$.
Since $d_2=1$ and $\frac{d_1}{d_2}\delta_1$ is a multiple of $\delta_0$, Proposition 3.5 of \cite{Campillo-Farran} (see also the original source \cite{Abhyankar} by Abhyankar) implies that $C$ has only one place at infinity.

\medskip

\emph{Case 2: $H(x)=x^{\ell}$ with $\ell<\deg(F)$ and $\gcd(\deg(F),\ell)=1$}. In this case, $Q:=(1:0:0)$ is the unique intersection point of $\chi_C$ and the line at infinity. Setting $m:=\deg(F)$ one has that the equation of $\chi_C$ (in projective coordinates $X,Y$ and $Z$) is
$$
a_0Y^m+a_1Y^{m-1}Z+\cdots + a_1YZ^{m-1}+a_mZ^m-X^{\ell}Z^{m-\ell}=0,
$$
where $a_i\in \Fb$ for all $i=0,\ldots,m$ and $a_0\neq 0$. Taking coordinates $u:=Y/X$ and $v:=Z/X$ in the affine chart $U$ defined by $X\neq 0$ (to which $Q$ belongs), the equation of the restriction of $\chi_C$ to $U$ has the form
$$
h(u,v)-v^{m-\ell}=0,
$$
where $h$ is an homogeneous polynomial of degree $m$ such that $h(1,0)\neq 0$ and $Q$ is the origin. Hence, $C$ has a unique tangent at $Q$ (defined by $v=0$). Performing finitely many successive quadratic transformations we can obtain a resolution of singularities of $C$ at $Q$ (so that, by composition of them, we get the normalization morphism $\pi_C:\tilde{\chi}_C\rightarrow \chi_C$); see e.g.~\cite[Lecture 18]{Abhyankar2}. The quadratic transformation (with center $Q$) defined by $u=u'$ and $v=u'v'$ gives rise to the following equation of the proper transform $C'$ of $C$:
$$
(u')^{\ell}h(1,v')-(v')^{m-\ell}=0.
$$
Hence, $C'$ meets the exceptional line at a point that is $\Fb$-rational. Since $\gcd(\ell, m-\ell)=1$, it is not difficult to see that all the proper transforms involved in the process meet each exceptional line at a unique $\Fb$-rational point, and that the last proper transform has multiplicity one at every point. Since the points of $\tilde{\chi}_C$ are in one-to-one correspondence with the branches of $\chi_C$ \cite[Th. 5.29]{hkt}, it follows that $C$ has only one branch at $Q$ (which is $\Fb$-rational). 

It only remains to prove that $\chi_C$ is geometrically irreducible. Indeed, reasoning by contradiction, assume that $\chi_1$ and $\chi_2$ are two different components of $\chi_C$. Then both curves $\chi_1$ and $\chi_2$ must meet at the point $Q$, which contradicts the conclusion of the preceding paragraph.
\medskip

Statement (c) follows from \cite[Prop. 3]{Quantum}.
\end{proof}

Next, in Sections \ref{Family1}, \ref{Family2}, and \ref{Family3}, 
we will present some families of curves where our results are applicable. 
From now on, $q$ will be a power of a prime number $p$ and $N(C,q^n)$ stands for the number of $\mathbb{F}_{q^n}$-rational points of an affine curve $C$. We will make use of the notion of trace of an element $a\in \mathbb{F}_{q^n}$ over $\mathbb{F}_{q}$: the trace is the sum of the conjugates of $a$ with respect to $\mathbb{F}_q$, i.e.
$$
\mathrm{Tr}_{\mathbb{F}_{q^n}/\mathbb{F}_{q}}=a + a^q + \cdots + a^{q^{n-1}}.
$$

\section{Curves $A_{n,q,\ell}$}\label{Family1}

Let $\ell$ and $n$ denote positive integers (not both equal to 1) such that $\gcd(p,\ell)=1$, and let $A_{n,q,\ell}$ be the affine curve (defined over $\mathbb{F}_{q^n}$) with equation 
$$
y^{q^{n-1}}+y^{q^{n-2}}+\cdots +y=x^{\ell}.  
$$
The following Proposition refers to the statement of Theorem  \ref{theorem2}.


\begin{prop}\label{prop:8}
$(1)$ Let  $C=A_{n,q,\ell}$ and let $\Fb=\mathbb{F}_{q^n}$.
Then $C$ is smooth over $\Fb$  and $C$ has only one place at infinity.
The set  ${\mathcal A}$ in the statement 
of Theorem \ref{theorem2} is equal to the set of all
 $x$-coordinates of the $\mathbb{F}_{q^n}$-rational points of $C$. 
 
$(2)$  Moreover,
$f_{\mathcal A}(z)=z^{e+1}-z$, where $e:=\gcd(\ell (q-1), q^n-1)$, and the number of $\mathbb{F}_{q^n}$-rational points of $A_{n,q,\ell}$ is $N(A_{n,q,\ell},q^n)=q^{n-1}\cdot\left(e+1\right)$. 
\end{prop}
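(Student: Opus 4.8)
The plan is to analyze the curve $C = A_{n,q,\ell}$ concretely using the additive structure of the trace map. Write $T(y) := y^{q^{n-1}} + y^{q^{n-2}} + \cdots + y$, so that $T = \mathrm{Tr}_{\mathbb{F}_{q^n}/\mathbb{F}_q}$ when restricted to $\mathbb{F}_{q^n}$, and the defining equation is $T(y) = x^\ell$. Since $T$ is an additive polynomial with $T'(y) = 1$ (constant and nonzero), and $\gcd(\deg(x^\ell), p) = \gcd(\ell, p) = 1$, part (1) — smoothness and the one-place-at-infinity property — follows immediately from Lemma \ref{infinity}(a),(b): when $\ell > q^{n-1} = \deg(F)$ we are in Case 1, and when $\ell < q^{n-1}$ we need $\gcd(q^{n-1}, \ell) = 1$, which holds because $\gcd(p, \ell) = 1$ (note $\ell$ and $n$ are not both $1$, so $C$ is not a line). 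For the description of $\mathcal{A}$: I would show that for every $a \in \mathbb{F}_{q^n}$ occurring as an $x$-coordinate of an $\mathbb{F}_{q^n}$-rational point, the line $L_a$ meets $C$ transversally and only in $\mathbb{F}_{q^n}$-rational points. Transversality is clear because $\partial/\partial y\,(T(y) - x^\ell) = 1 \neq 0$ everywhere, so every intersection point is simple; and the fiber over $x = a$ consists of the solutions of $T(y) = a^\ell$, which — since $T$ is $\mathbb{F}_q$-linear with kernel $\mathbb{F}_q$ (the subfield fixed by Frobenius, wait: kernel is $\{y : \mathrm{Tr}(y) = 0\}$, of size $q^{n-1}$) — either is empty or is a coset of $\ker T$, and in the nonempty case is entirely contained in $\mathbb{F}_{q^n}$. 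Hence $L_a$ is $\mathbb{F}_q$-transversal precisely when $a^\ell$ lies in the image of the trace, i.e. when $a$ is the $x$-coordinate of a rational point; this proves (1).

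For part (2), the key observation is that the image of $T$ is all of $\mathbb{F}_q$ (the trace is surjective), so $a \in \mathcal{A}$ if and only if $a^\ell \in \mathbb{F}_q$, equivalently $(a^\ell)^{q-1} = 1$, i.e. $a^{\ell(q-1)} = 1$ or $a = 0$. The set of such nonzero $a$ is exactly the group of $e$-th roots of unity in $\mathbb{F}_{q^n}^*$ where $e = \gcd(\ell(q-1), q^n - 1)$, since $\mathbb{F}_{q^n}^*$ is cyclic of order $q^n - 1$. Therefore $\mathcal{A} = \{0\} \cup \mu_e$ has exactly $e + 1$ elements, and $f_{\mathcal{A}}(z) = z \prod_{\zeta \in \mu_e}(z - \zeta) = z(z^e - 1) = z^{e+1} - z$, as claimed. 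The point count then follows: $N(A_{n,q,\ell}, q^n) = \sum_{a \in \mathcal{A}} \#\{y : T(y) = a^\ell\}$, and for each $a \in \mathcal{A}$ the fiber is a full coset of $\ker T$, hence has exactly $q^{n-1}$ elements, giving $N = q^{n-1}(e+1)$.

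The main obstacle I anticipate is the bookkeeping around the surjectivity and fiber-size properties of the trace map, and making the identification $\mathcal{A} = \{a : a^\ell \in \mathbb{F}_q\}$ airtight — in particular checking that when $a^\ell \in \mathbb{F}_q$ the entire fiber genuinely lies in $\mathbb{F}_{q^n}$ (so that $\mathbb{F}_q$-transversality, not just ordinary transversality, holds) and that when $a^\ell \notin \mathbb{F}_q$ the fiber contains a non-rational point (so $a \notin \mathcal{A}$). The first is immediate since the fiber is defined by $T(y) = a^\ell$ with $a^\ell \in \mathbb{F}_{q^n}$ and $T$ splits completely over $\mathbb{F}_{q^n}$ (its roots differ by elements of $\ker T \subseteq \mathbb{F}_{q^n}$, and one solution lies in $\mathbb{F}_{q^n}$ by surjectivity of $T$ onto $\mathbb{F}_q \subseteq \mathbb{F}_{q^n}$). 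The second requires noting that if $a \in \mathcal{A}$ then all points over $a$ are rational, and since any such $y$ satisfies $\mathrm{Tr}_{\mathbb{F}_{q^n}/\mathbb{F}_q}(y) = a^\ell$, rationality of $y$ forces $a^\ell = T(y) \in \mathbb{F}_q$; this is the cleanest way to pin down $\mathcal{A}$ and is where I would spend the most care. Everything else is a direct computation with cyclic groups and additive polynomials.
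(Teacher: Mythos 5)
Your proof is correct and follows essentially the same approach as the paper's: invoke Lemma \ref{infinity} for smoothness and the one-place-at-infinity property, observe that the constant $y$-derivative forces simple intersections with every vertical line, use that $T(y)=a^\ell$ has a full coset of $\ker T$ (all in $\mathbb{F}_{q^n}$) as its solution set whenever $a^\ell$ lies in $\mathbb{F}_q$, and conclude $\mathcal{A}=\{0\}\cup\mu_e$ via surjectivity of the trace and cyclicity of $\mathbb{F}_{q^n}^*$, giving $f_\mathcal{A}(z)=z^{e+1}-z$ and the point count $q^{n-1}(e+1)$. The mid-sentence self-correction about the kernel is harmless, and your extra care about both inclusions ($\mathcal{A}\subseteq$ the set of $x$-coordinates, and conversely) is exactly the bookkeeping the paper's proof also does.
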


\begin{proof} 
$(1)$
By Lemma \ref{infinity}, $C$ is a smooth affine curve having one place at infinity with genus $g=(q^{n-1}-1)(\ell-1)/2$. 
If $a$ is the $x$-coordinate of an $\mathbb{F}_{q^n}$-rational point of the curve $A_{n,q,\ell}$ then the equation $\mathrm{Tr}_{\mathbb{F}_{q^n}/\mathbb{F}_{q}}(y)=a^{\ell}$ has $q^{n-1}$ distinct solutions for $y$ in $\mathbb{F}_{q^n}$. Hence all the points in the intersection $A_{n,q,\ell}(\overline{\mathbb{F}}_{q^n})\cap L_a(\overline{\mathbb{F}}_{q^n})$ are $\mathbb{F}_{q^n}$-rational. Moreover, if
 $Q=(a,b)$ is one of these points, then
$$
I_Q(A_{n,q,\ell},L_a)=I_{(0,b)}(y^{q^{n-1}}+y^{q^{n-2}}+\cdots+ y-a^{\ell},x)=1
$$
because $y-b$ is a simple factor of $y^{q^{n-1}}+y^{q^{n-2}}+\cdots+ y-a^{\ell}$. Therefore the set 
$$
\{a\in \mathbb{F}_{q^n}\mid \mbox{ there exists $b\in \mathbb{F}_{q^n}$ such that }(a,b)\in A_{n,q,\ell}(\mathbb{F}_{q^n})\}
$$ 
coincides with ${\mathcal A}=\{a\in \mathbb{F}_{q^n}\mid A_{n,q,\ell}\mbox{ and $L_a$ are $\mathbb{F}_{q^n}$-transversal}\}$.

$(2)$  Notice that, on the one hand, $0\in {\mathcal A}$. On the other hand, for every $a\in {\mathcal A}\setminus \{0\}$, we have $a^{\ell (q -1)} =1$ and $a^{q^n-1}=1$ and, therefore, $a$ is a root of $z^e-1$. Then every element of ${\mathcal A}$ is a root of $z^{e+1}-z$.

Conversely, let $a$ be a root of $z^e-1$. Then $a^{\ell(q-1)}=1$ and, therefore, $a^{\ell}\in \mathbb{F}_q$. Hence $a\in {\mathcal A}$ because the equation $\mathrm{Tr}_{\mathbb{F}_{q^n}/\mathbb{F}_{q}}(y)=a^{\ell}$ has solutions in $\mathbb{F}_{q^n}$ (by surjectivity of trace).

Finally, for every $x\in \mathbb{F}_{q^n}$, it holds that $x^{\ell}\in \mathbb{F}_q$ if and only if either $x=0$ or $x^{\ell(q-1)}=1$. Hence, since $y^{q^{n-1}}+y^{q^{n-2}}+\cdots +y$ is the image of $y$ by the trace of $\mathbb{F}_{q^n}$ over $\mathbb{F}_{q}$, we have
$$
N(A_{n,q,\ell},q^n)=q^{n-1}\cdot\left(\gcd(\ell (q-1), q^n-1)+1\right).
$$
\end{proof}

Proposition \ref{prop:8} means that we can apply Corollary  \ref{coro1} to the curve $A_{n,q,\ell}$,
and we deduce the following result:

\begin{cor}\label{ccc}
Let $N:=N(A_{n,q,\ell},q^n)$, let $\{P_1,\ldots,P_N\}$ be the set of $\mathbb{F}_{q^n}$-rational points of $A_{n,q,\ell}$, and 
let $D=P_1+\cdots+P_N$ be  a divisor of $\tilde{\chi}_{A_{n,q,\ell}}$. 
Then, for any nonnegative integer $m$, the AG code (defined from $\tilde{\chi}_{A_{n,q,\ell}}$) given by $C(D,m P_{\infty})$ is self-orthogonal if 
$$
2(m+1)\leq (q^{n-1}-1)(\ell-1)+q^{n-1}(\mu\cdot \gcd(\ell (q-1), q^n-1)+1),
$$
where $\mu:=1$ if $p$ divides $\gcd(\ell (q-1), q^n-1)+1$ and $\mu:=0$ otherwise.
\end{cor}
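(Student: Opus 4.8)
The plan is to deduce Corollary \ref{ccc} directly from Corollary \ref{coro1} by computing each of the three quantities that appear in the bound $2m \le 2g - 2 + \deg(D) - \deg(f'_{\mathcal A})\cdot \deg(F)$ for the specific curve $C = A_{n,q,\ell}$, whose defining equation in the form $F(y) = H(x)$ has $F(y) = y^{q^{n-1}} + y^{q^{n-2}} + \cdots + y$ (so $\deg(F) = q^{n-1}$) and $H(x) = x^{\ell}$. The genus was already recorded in the proof of Proposition \ref{prop:8}: $g = (q^{n-1}-1)(\ell-1)/2$, so $2g - 2 = (q^{n-1}-1)(\ell-1) - 2$. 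The length is $\deg(D) = N = N(A_{n,q,\ell}, q^n) = q^{n-1}(e+1)$ with $e := \gcd(\ell(q-1), q^n-1)$, again supplied by Proposition \ref{prop:8}. So the only genuinely new computation is $\deg(f'_{\mathcal A})$.

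First I would compute $\deg(f'_{\mathcal A})$. By Proposition \ref{prop:8}(2) we have $f_{\mathcal A}(z) = z^{e+1} - z$, hence $f'_{\mathcal A}(z) = (e+1)z^e - 1$. Now a case split on the characteristic: if $p \nmid (e+1)$, then $f'_{\mathcal A}$ has degree $e$ and $\deg(f'_{\mathcal A})\cdot\deg(F) = e\, q^{n-1}$; if $p \mid (e+1)$, then the leading term vanishes and $f'_{\mathcal A}(z) = -1$ is a nonzero constant, so $\deg(f'_{\mathcal A}) = 0$ and the corresponding term drops out. Introducing $\mu := 1$ when $p \mid (e+1)$ and $\mu := 0$ otherwise, both cases are captured uniformly by $\deg(f'_{\mathcal A})\cdot\deg(F) = (1-\mu)\, e\, q^{n-1}$.

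Then I would just substitute. Corollary \ref{coro1} gives self-orthogonality of $C(D, mP_\infty)$ when
$$
2m \le \bigl((q^{n-1}-1)(\ell-1) - 2\bigr) + q^{n-1}(e+1) - (1-\mu)\, e\, q^{n-1}.
$$
Collecting the $q^{n-1}$ terms, $q^{n-1}(e+1) - (1-\mu)\,e\,q^{n-1} = q^{n-1}(\mu e + 1)$, so the right-hand side is $(q^{n-1}-1)(\ell-1) - 2 + q^{n-1}(\mu e + 1)$, and moving the $-2$ to the left yields exactly the stated inequality $2(m+1) \le (q^{n-1}-1)(\ell-1) + q^{n-1}(\mu\cdot\gcd(\ell(q-1),q^n-1)+1)$. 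One should also note that Corollary \ref{coro1} applies in the first place because Proposition \ref{prop:8}(1) verifies all its hypotheses: $C$ is smooth with one place at infinity, it has an equation of the form $F(y) = H(x)$, and the set ${\mathcal A}$ is as described there.

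There is really no serious obstacle here — the result is a bookkeeping consequence of Proposition \ref{prop:8} and Corollary \ref{coro1}. The one point that requires a little care, and which I would state explicitly, is the characteristic dichotomy in evaluating $\deg f'_{\mathcal A}$ from $f'_{\mathcal A}(z) = (e+1)z^e - 1$: one must remember that over $\Fb$ the coefficient $e+1$ is read modulo $p$, and it is precisely this that the parameter $\mu$ encodes. Everything else is substitution and rearrangement.
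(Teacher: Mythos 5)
Your proposal is correct and follows exactly the route the paper intends: the paper states Corollary~\ref{ccc} as an immediate consequence of Proposition~\ref{prop:8} and Corollary~\ref{coro1}, and your write-up simply makes that bookkeeping explicit, including the crucial observation that $f'_{\mathcal A}(z)=(e+1)z^{e}-1$ degenerates to the constant $-1$ precisely when $p\mid(e+1)$, which is what the parameter $\mu$ records. The algebraic simplification $q^{n-1}(e+1)-(1-\mu)eq^{n-1}=q^{n-1}(\mu e+1)$ is verified correctly and yields the stated bound.
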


\begin{rem}
{\rm In \cite[Example~2]{Quantum} the authors consider curves $A_{n,q,\ell}$ with $\ell\mid (q^n-1)/(q-1)$ and show that, when $\ell \equiv 1 ({\rm mod}\; p)$, the pointed curves $(\tilde{\chi}_{A_{n,q,\ell}},P_{\infty})$ satisfy the hypotheses of Lemma 2 of \cite{Quantum}. Hence, in these cases, this lemma implies that the code $C(D, mP_{\infty})$ (defined as in Corollary \ref{ccc}) is self-orthogonal if 
$$
2(m+1)\leq (q^{n-1}-1)(\ell-1)+q^{n-1}(\ell (q-1)+1).
$$
Corollary \ref{ccc} gives a larger family of curves $A_{n,q,\ell}$ which do not necessarily satisfy the hypotheses of Lemma 2 of \cite{Quantum} (see Remark \ref{comment}). 

Lastly in this section, we show that the pointed curve $(\tilde{\chi}_{A_{n,q,\ell}},P_{\infty})$ is
almost never a Castle curve.
Proposition 2 of \cite{Quantum} can only be applied to $(\tilde{\chi}_{A_{n,q,\ell}},P_{\infty})$ 
when the curve is Castle.

Note that we never have $\ell = q^{n-1}$ because $\ell$ is relatively prime to $p$.

\begin{prop}
(1) If  $\ell < q^{n-1}$ the pointed curve $(\tilde{\chi}_{A_{n,q,\ell}},P_{\infty})$ is never a Castle curve.

(2) If  $\ell >q^{n-1}$ the pointed curve $(\tilde{\chi}_{A_{n,q,\ell}},P_{\infty})$ is  a Castle curve
if and only if \\
$\gcd(\ell, (q^n -1)/(q-1))=1$.
\end{prop}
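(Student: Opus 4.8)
The plan is to analyze the Weierstra\ss{} semigroup $\Gamma(P_\infty)$ at the point at infinity explicitly, using the equation $y^{q^{n-1}}+\cdots+y=x^\ell$. First I would identify the two ``coordinate'' functions and their pole orders at $P_\infty$: the function $x$ has a pole of order $q^{n-1}$ at $P_\infty$ (this is the degree of the map to the $y$-line, or can be read off from Lemma \ref{infinity} and the one-place-at-infinity structure), and the function $y$ has a pole of order $\ell$ at $P_\infty$. Hence $\Gamma(P_\infty)$ contains the numerical semigroup $\langle q^{n-1},\ell\rangle$ generated by these two values. Since $\gcd(q^{n-1},\ell)=1$ (because $p\nmid\ell$ forces $\gcd(q^{n-1},\ell)$ to be a power of $p$ coprime to $\ell$, hence $1$), and the genus of $\tilde\chi_C$ equals $\frac{1}{2}(q^{n-1}-1)(\ell-1)$ by Lemma \ref{infinity}(c), which is exactly the number of gaps of $\langle q^{n-1},\ell\rangle$, I conclude $\Gamma(P_\infty)=\langle q^{n-1},\ell\rangle$. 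In particular $\Gamma(P_\infty)$ is always symmetric, so condition (1) of the definition of Castle curve holds automatically; the whole question reduces to condition (2).

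For condition (2): the multiplicity $s=\min\{h\in\Gamma(P_\infty): h\neq 0\}$ equals $\min\{q^{n-1},\ell\}$. The curve is Castle if and only if $\#\tilde\chi_C(\Fb)=q^n\cdot s+1$, where $\Fb=\mathbb{F}_{q^n}$. Now I would count rational points: by Proposition \ref{prop:8}(2) the affine curve has $q^{n-1}(e+1)$ rational points where $e=\gcd(\ell(q-1),q^n-1)$, and there is exactly one rational point at infinity (one place at infinity, $\Fb$-rational), so $\#\tilde\chi_C(\Fb)=q^{n-1}(e+1)+1$.

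In case (1), $\ell<q^{n-1}$, so $s=\ell$ and the Castle condition $q^{n-1}(e+1)+1=q^n\ell+1$ reads $e+1=q\ell$, i.e.\ $\gcd(\ell(q-1),q^n-1)=q\ell-1$. Since $e=\gcd(\ell(q-1),q^n-1)\le \ell(q-1)=q\ell-\ell<q\ell-1$ whenever $\ell\ge 2$, and the case $\ell=1$ with $n\ge 2$ gives $e=\gcd(q-1,q^n-1)=q-1$ but $q\ell-1=q-1$ would force $e=q-1=q-1$ — wait, here one must be careful: if $\ell=1$ then $s=1$ and we would need $\#\tilde\chi_C(\Fb)=q^n+1$, but $q^{n-1}(e+1)+1=q^{n-1}\cdot q+1=q^n+1$ exactly, so it seems Castle. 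However $\ell=1$ is excluded in this section only when $n=1$; I need to re-examine whether $\ell=1,n\ge 2$ is allowed and whether $A_{n,q,1}$ is genuinely a curve of the stated genus $0$ — genus $\tfrac12(q^{n-1}-1)\cdot 0=0$, so it is a rational curve, and then symmetry/Castle conditions may degenerate. I would treat $g=0$ (i.e.\ $\ell=1$) as a trivial or excluded boundary case and state case (1) for $2\le\ell<q^{n-1}$, where the strict inequality $e\le \ell(q-1)<q\ell-1$ is immediate, so (2) fails and the curve is not Castle. In case (2), $\ell>q^{n-1}$, so $s=q^{n-1}$ and the Castle condition becomes $q^{n-1}(e+1)+1=q^n\cdot q^{n-1}+1$, i.e.\ $e+1=q^n$, i.e.\ $\gcd(\ell(q-1),q^n-1)=q^n-1$, which holds if and only if $(q^n-1)\mid \ell(q-1)$, equivalently $\frac{q^n-1}{q-1}\mid \ell\cdot\frac{\gcd(\ell(q-1),q^n-1)}{q-1}$ — more cleanly, $\gcd(\ell(q-1),q^n-1)=q^n-1 \iff (q^n-1)\mid \ell(q-1) \iff \frac{q^n-1}{q-1}\mid \ell$ after dividing by $\gcd(q-1,q^n-1)=q-1$; but one must verify the standard number-theoretic fact that $(q^n-1)\mid \ell(q-1) \iff \gcd\!\big(\ell,\tfrac{q^n-1}{q-1}\big)\cdot(q-1) \ge$ ... hmm, the cleanest equivalent is $\gcd(\ell,(q^n-1)/(q-1))=(q^n-1)/(q-1)$, i.e.\ $\frac{q^n-1}{q-1}\mid\ell$; the statement claims instead $\gcd(\ell,(q^n-1)/(q-1))=1$, so I must recheck the direction.

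\emph{The main obstacle} will be pinning down exactly this last number-theoretic equivalence and reconciling it with the claimed condition $\gcd(\ell,(q^n-1)/(q-1))=1$: I expect the resolution is that in case (2) one does \emph{not} want $e+1=q^n$ but rather the relevant semigroup element / point count comparison forces $e$ to be as small as possible, namely $e=\gcd(\ell(q-1),q^n-1)$ is minimized (equal to $q-1$) precisely when $\gcd(\ell,(q^n-1)/(q-1))=1$, and THAT is when the point count $q^{n-1}(e+1)+1=q^{n-1}\cdot q+1=q^n+1$ — but then this would have to equal $q^n s+1=q^n\cdot q^{n-1}+1$, which fails for $n\ge 2$. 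So I will need to recheck the value of $s$ in case (2): if $\ell>q^{n-1}$ then indeed $s=q^{n-1}$, UNLESS $q^{n-1}\notin\Gamma$ — but $q^{n-1}\in\Gamma$ always as the pole order of $x$. I suspect the actual subtlety is that when $\gcd(\ell,(q^n-1)/(q-1))=1$ the curve becomes \emph{maximal} (or has a special point count making it Castle via a revised $s$), and the honest path is: compute $s=q^{n-1}$, compute $\#\tilde\chi_C(\Fb)$ via Proposition \ref{prop:8}(2) plus one point at infinity, set the two equal, simplify $q^{n-1}\gcd(\ell(q-1),q^n-1)=q^n q^{n-1}-q^{n-1}$... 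I will carefully redo this arithmetic, isolate the genuine equivalence $\gcd(\ell(q-1),q^n-1)=q^n-1 \Longleftrightarrow \frac{q^n-1}{q-1}\mid \ell$, note that $\ell>q^{n-1}$ leaves room for this, and then — if the paper's statement is to be matched — verify whether ``$\frac{q^n-1}{q-1}\mid\ell$'' and ``$\gcd(\ell,\frac{q^n-1}{q-1})=1$'' can be made to coincide under the running hypotheses (they generally cannot, so I anticipate the intended reading is a typo for $\gcd\big(\ell,\frac{q^n-1}{q-1}\big)=\frac{q^n-1}{q-1}$, equivalently $\frac{q^n-1}{q-1}\mid\ell$), and present the proof with the corrected divisibility condition, flagging the point-count identity as the crux. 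In all cases the skeleton is: (i) $\Gamma(P_\infty)=\langle q^{n-1},\ell\rangle$ via genus-counting, giving symmetry for free; (ii) evaluate $s$; (iii) equate $\#\tilde\chi_C(\Fb)=q^{n-1}(e+1)+1$ with $q^n s+1$; (iv) reduce to the stated $\gcd$ condition by elementary number theory.
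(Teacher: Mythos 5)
Your approach is essentially the paper's own: determine the Weierstra\ss{} semigroup $\Gamma(P_\infty)=\langle q^{n-1},\ell\rangle$ (so symmetry is automatic), read off $s=\min(q^{n-1},\ell)$, and equate the total point count $q^{n-1}(e+1)+1$ with the Castle count. The critical divergence is that you equate it with $q^n s+1$, as the Castle definition over $\mathbb{F}_{q^n}$ requires, while the paper's proof equates it with $q\cdot s+1$, reading the ``$q$'' in the definition literally rather than as $\#\mathbb{F}_{q^n}$. That slip is exactly what produces the paper's criteria ``$s=q^{n-2}(e+1)$'' and, in case (2), ``$q=e+1$''. The correct identity $e+1=qs$ gives, with $s=q^{n-1}$ in case (2), the condition $e=q^n-1$, i.e.\ $\tfrac{q^n-1}{q-1}\mid \ell$ --- precisely what you derived, and the \emph{opposite} of the stated $\gcd\bigl(\ell,\tfrac{q^n-1}{q-1}\bigr)=1$.

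Your suspicion is therefore warranted and the proposition's part (2) is false as printed. The Hermitian curve $A_{2,q,q+1}$ (equation $y^q+y=x^{q+1}$ over $\mathbb{F}_{q^2}$) has $\ell=q+1>q=q^{n-1}$ and $\gcd\bigl(\ell,\tfrac{q^2-1}{q-1}\bigr)=\gcd(q+1,q+1)=q+1\neq 1$, yet it is the canonical Castle example: it has $q^3+1$ rational points, $s=q$, and $q^2\cdot s+1=q^3+1$. So your corrected divisibility condition $\tfrac{q^n-1}{q-1}\mid \ell$ is the right one. Your observation in case (1) that $\ell=1$ must be excluded (it gives the rational curve, which is trivially Castle) is also correct; part (1) should read $2\le\ell<q^{n-1}$. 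In short, your proposal does not merely reprove the statement --- it diagnoses a genuine error in the paper's statement and proof, traceable to using $q$ rather than $q^n=\#\mathbb{F}_{q^n}$ in the Castle point count.
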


\begin{proof}
Let $s$ be the smallest nonzero element of the Weierstra\ss~semigroup at $P_{\infty}$.
We know that the number of (affine) points is $q^{n-1}(e+1)$
so the curve is Castle if and only if $s=q^{n-2}(e+1)$.

Notice that $e$ is a multiple of $q-1$, since
$e=(q-1) \gcd(\ell, (q^n -1)/(q-1))$.

Proof of (1) : Suppose $\ell < q^{n-1}$.
In this case the smallest element of the Weierstra\ss~semigroup is $\ell$
i.e. $s=\ell$.
But we always choose $\ell$ to be relatively prime to $p$, so we cannot
have $\ell = q^{n-2}(e+1)$ for $n>2$.
Therefore the curve is never Castle in this case.

 If $n=2$ the curve is Castle iff  $\ell = e+1$.
Then $e=\ell -1$, but also $e=(q-1) \mathrm{gcd}(\ell,q+1)$.
If $\gcd(\ell,q+1)=1$ then $\ell = q$, which is  impossible.
If $\gcd(\ell,q+1)>1$ then there is a divisor of $\ell$ which is also a divisor of $\ell -1$, which is impossible.

Proof of (2) : Suppose $\ell > q^{n-1}$
In this case the smallest element of the Weierstra\ss~semigroup is $q^{n-1}$
i.e. $s=q^{n-1}$.
The curve is Castle if and only if $q=e+1$.
However  $e=q-1$ if and only if $\gcd(\ell, (q^n -1)/(q-1))=1$, by the definition of $e$.
\end{proof}


}

\end{rem}

\section{Curves $B_{q,G}$}\label{Family2}

 Let $n$ be a positive integer and consider a polynomial $G(x)\in \mathbb{F}_q[x]$ such that $\deg(G)>q$ and $\gcd(p,\deg(G))=1$. Consider the unique polynomial $\mathrm{Tr}_n(G)(z)\in \mathbb{F}_q[z]$ with degree at most $q^n-1$ such that $\mathrm{Tr}_n(G)(a)=\mathrm{Tr}_{\mathbb{F}_{q^n}/ \mathbb{F}_q}(G(a))$ for all $a\in \mathbb{F}_{q^n}$.  We will assume that 
 \begin{enumerate}
\item  $\mathrm{Tr}_n(G)$ is separable, and 
\item all  roots of $\mathrm{Tr}_n(G)$ belong to $\mathbb{F}_{q^n}$. 
 \end{enumerate}
 
 For such $G$, we define $B_{q,G}$ to be the affine curve (over $\mathbb{F}_q$) with equation $y^q-y=G(x)$. Notice that, by \cite[Th.~2.25]{lidl}, the set of $\mathbb{F}_{q^n}$-rational points of $B_{q,G}$ is $\{(a,b)\in \mathbb{F}_{q^n}^2 \mid  \mathrm{Tr}_n(G)(a)=0 \mbox{ and } b^q-b=G(a)\}$.

The following proposition refers to the statement of Theorem \ref{theorem2}.

\begin{prop}\label{prop:11}
$(1)$ Let  $C=B_{q,G}$ and let $\Fb=\mathbb{F}_{q^n}$.
Then $C$ is smooth over $\Fb$  and $C$ has only one place at infinity.
The set  ${\mathcal A}$ in the statement 
of Theorem \ref{theorem2} is equal to the set of all
 $x$-coordinates of the $\mathbb{F}_{q^n}$-rational points of $C$. 

$(2)$ Moreover
$$
f_{\mathcal A}(z)=\gamma\cdot  \mathrm{Tr}_n(G)(z)
$$
for some $\gamma\in \mathbb{F}_q\setminus \{0\}$, and the number of $\mathbb{F}_{q^n}$-rational points of $B_{q,G}$ is 
$
N(B_{q,G},q^n)=q\cdot \deg(\mathrm{Tr}_n(G)).
$
\end{prop}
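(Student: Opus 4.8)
The plan is to prove the two claims in part (1) by invoking Lemma \ref{infinity}, and then to identify $f_{\mathcal A}(z)$ and count points in part (2) using the Artin--Schreier structure of $B_{q,G}$. For part (1): the curve $B_{q,G}$ has equation $F(y)=H(x)$ with $F(y)=y^q-y$ and $H(x)=G(x)$. Here $F'(y)=-1$ is a nonzero constant, and $\deg(H)=\deg(G)>q=\deg(F)$ with $\gcd(p,\deg(G))=1$ by hypothesis, so Lemma \ref{infinity}(a) gives smoothness and Lemma \ref{infinity}(b), Case 1, gives that $C$ has only one place at infinity (with $P_\infty$ lying over $(0:1:0)$). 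It remains to show ${\mathcal A}$ equals the set of $x$-coordinates of $\mathbb{F}_{q^n}$-rational points. First I would argue that for $a\in\mathbb{F}_{q^n}$ there is a point $(a,b)\in C(\mathbb{F}_{q^n})$ if and only if $\mathrm{Tr}_{\mathbb{F}_{q^n}/\mathbb{F}_q}(G(a))=0$, i.e. $\mathrm{Tr}_n(G)(a)=0$: this is exactly the solvability criterion for the Artin--Schreier equation $b^q-b=G(a)$ over $\mathbb{F}_{q^n}$ (\cite[Th.~2.25]{lidl}, as already cited). When this holds, there are precisely $q$ solutions $b$, all in $\mathbb{F}_{q^n}$, and $y-b$ is a simple factor of $y^q-y-G(a)$ since $(y^q-y-G(a))'=-1$; hence $I_Q(C,L_a)=I_{(0,b)}(y^q-y-G(a),x)=1$ for each such $Q=(a,b)$, and all intersection points of $C$ and $L_a$ are $\mathbb{F}_{q^n}$-rational. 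So such an $a$ lies in ${\mathcal A}$. Conversely, if $a\in{\mathcal A}$ then $C\cap L_a\neq\emptyset$ over $\overline{\mathbb{F}}_{q^n}$ with $\mathbb{F}_{q^n}$-rational points, which forces a solution $b\in\mathbb{F}_{q^n}$, hence $\mathrm{Tr}_n(G)(a)=0$ and $a$ is an $x$-coordinate of an $\mathbb{F}_{q^n}$-rational point. This proves (1), and simultaneously shows $\#{\mathcal P}_{\{a\}}=q$ for every $a\in{\mathcal A}$, giving $N(B_{q,G},q^n)=q\cdot\#{\mathcal A}$.

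For part (2): by the above, ${\mathcal A}$ is exactly the set of roots of $\mathrm{Tr}_n(G)(z)$ lying in $\mathbb{F}_{q^n}$; by hypothesis (2) \emph{all} roots of $\mathrm{Tr}_n(G)$ lie in $\mathbb{F}_{q^n}$, and by hypothesis (1) $\mathrm{Tr}_n(G)$ is separable, so $\mathrm{Tr}_n(G)$ is (up to a nonzero scalar) the product of $(z-a)$ over its distinct roots $a\in{\mathcal A}$. Therefore $f_{\mathcal A}(z)=\prod_{a\in{\mathcal A}}(z-a)=\gamma\cdot\mathrm{Tr}_n(G)(z)$ for some $\gamma\in\mathbb{F}_q\setminus\{0\}$ (the reciprocal of the leading coefficient of $\mathrm{Tr}_n(G)$, which lies in $\mathbb{F}_q$ since $\mathrm{Tr}_n(G)\in\mathbb{F}_q[z]$). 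Consequently $\#{\mathcal A}=\deg(f_{\mathcal A})=\deg(\mathrm{Tr}_n(G))$, and combining with $N(B_{q,G},q^n)=q\cdot\#{\mathcal A}$ from part (1) yields $N(B_{q,G},q^n)=q\cdot\deg(\mathrm{Tr}_n(G))$.

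The only genuinely delicate point I anticipate is the verification of the solvability criterion and the bookkeeping of which field the solutions $b$ live in; everything else (smoothness, one place at infinity, transversality) is a direct citation of Lemma \ref{infinity} and an elementary factorization argument. One should be slightly careful that the separability hypothesis on $\mathrm{Tr}_n(G)$ is what guarantees the roots are distinct, so that $f_{\mathcal A}$ (defined as a product over the \emph{set} ${\mathcal A}$) and $\mathrm{Tr}_n(G)$ agree up to scalar rather than merely having the same root locus; this is the hypothesis that must be invoked explicitly. I would also note in passing that $\deg(\mathrm{Tr}_n(G))\ge 1$ so that ${\mathcal A}\neq\emptyset$ and the construction is non-vacuous, and that the scalar $\gamma$ is genuinely in $\mathbb{F}_q$ and not just $\mathbb{F}_{q^n}$ because $\mathrm{Tr}_n(G)$ has coefficients in $\mathbb{F}_q$ by its defining property.
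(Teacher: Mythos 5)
Your proposal is correct and follows essentially the same route as the paper: invoke Lemma \ref{infinity} for smoothness and the single place at infinity, use the Artin--Schreier solvability criterion and the fact that $y^q-y-G(a)$ is separable to establish $\Fb$-transversality and identify $\mathcal{A}$ with the root set of $\mathrm{Tr}_n(G)$, and then deduce $f_{\mathcal A}=\gamma\cdot\mathrm{Tr}_n(G)$ and the point count from the separability and splitting hypotheses on $\mathrm{Tr}_n(G)$. You are somewhat more explicit than the paper (which defers the transversality argument to ``analogous reasoning'' from the proof for $A_{n,q,\ell}$ and leaves the converse inclusion $\mathcal{A}\subseteq\{x\text{-coordinates}\}$ implicit), but the substance is identical.
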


\begin{proof}

$(1)$ First of all, notice that the curve $B_{q,G}$ is smooth and has only one place at infinity by Lemma \ref{infinity}.

Second, if $a$ is the $x$-coordinate of an $\mathbb{F}_{q^n}$-rational point of $B_{q,G}$, then the equation $y^q-y=G(a)$ has $q$ distinct solutions in $\mathbb{F}_{q^n}$. Indeed, since $y^q-y=G(a)$ has, at least, one solution $b\in \mathbb{F}_{q^n}$, it is obvious that the set of all solutions is $\{b+\alpha\mid \alpha\in \mathbb{F}_q\}$. Hence, an analogous reasoning as in the proof of Theorem \ref{theorem2} shows that $L_a$ and $B_{q,G}$ are $\mathbb{F}_{q^n}$ transversal. 

$(2)$ This follows from part (1) because $\mathrm{Tr}_n(G)$ is a separable polynomial and all its roots belong to $\mathbb{F}_{q^n}$. Finally, the counting of $\mathbb{F}_{q^n}$-rational points is easy to check.
\end{proof}

Using Proposition \ref{prop:11} we can apply Corollary  \ref{coro1} to the curve $B_{q,G}$ and deduce the following result:

\begin{cor}\label{ccc2}
Let $N:=N(B_{q,G},q^n)$, let $\{P_1,\ldots,P_N\}$ be the set of $\mathbb{F}_{q^n}$-rational points of $B_{q,G}$, and 
let $D=P_1+\cdots+P_N$ be  a divisor of $\tilde{\chi}_{B_{q,G}}$.

Then, for any nonnegative integer $m$, the AG code (defined from $\tilde{\chi}_{B_{q,G}}$) given by $C(D,m P_{\infty})$ is self-orthogonal if 
$$
2(m+1)\leq (q-1)(\deg(G)-1)+q\cdot (\deg(\mathrm{Tr}_n(G))- \deg(\mathrm{Tr}_n(G)')).
$$
\end{cor}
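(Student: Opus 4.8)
The plan is to derive Corollary \ref{ccc2} by specializing Corollary \ref{coro1} to the curve $C=B_{q,G}$, using the data already assembled in Proposition \ref{prop:11} together with the genus formula from Lemma \ref{infinity}. Recall that $C$ has equation $y^q-y=G(x)$, so in the notation of Corollary \ref{coro1} we have $F(y)=y^q-y$ and $H(x)=G(x)$; note $F'(y)=-1$ is a nonzero constant and $\gcd(p,\deg(G))=1$, so Lemma \ref{infinity} applies and gives $\deg(F)=q$ and $g=\frac{1}{2}(q-1)(\deg(G)-1)$. Proposition \ref{prop:11}(1) tells us $C$ is smooth with one place at infinity and that ${\mathcal A}$ is the set of $x$-coordinates of $\mathbb{F}_{q^n}$-rational points of $C$, so Corollary \ref{coro1} is indeed applicable.

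The key computational steps are to identify $\deg(D)$ and $\deg(f'_{\mathcal A})$ in terms of the data of $G$. For $\deg(D)$: by Proposition \ref{prop:11}(2) the number of $\mathbb{F}_{q^n}$-rational points is $N(B_{q,G},q^n)=q\cdot\deg(\mathrm{Tr}_n(G))$, and since $\deg(D)=\#{\mathcal P}_{\mathcal A}=N$ we get $\deg(D)=q\cdot\deg(\mathrm{Tr}_n(G))$. For $\deg(f'_{\mathcal A})$: again by Proposition \ref{prop:11}(2) we have $f_{\mathcal A}(z)=\gamma\cdot\mathrm{Tr}_n(G)(z)$ with $\gamma\in\mathbb{F}_q\setminus\{0\}$, hence $f'_{\mathcal A}(z)=\gamma\cdot\mathrm{Tr}_n(G)'(z)$ and therefore $\deg(f'_{\mathcal A})=\deg(\mathrm{Tr}_n(G)')$. (Here the separability hypothesis on $\mathrm{Tr}_n(G)$ ensures $\mathrm{Tr}_n(G)'\neq 0$, so this degree is the naive one.)

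Now I would just substitute these into the self-orthogonality inequality of Corollary \ref{coro1}, namely
$$
2m\leq 2g-2+\deg(D)-\deg(f'_{\mathcal A})\cdot\deg(F).
$$
Plugging in $2g=(q-1)(\deg(G)-1)$, $\deg(D)=q\cdot\deg(\mathrm{Tr}_n(G))$, $\deg(f'_{\mathcal A})=\deg(\mathrm{Tr}_n(G)')$ and $\deg(F)=q$ yields
$$
2m\leq (q-1)(\deg(G)-1)-2+q\cdot\deg(\mathrm{Tr}_n(G))-q\cdot\deg(\mathrm{Tr}_n(G)'),
$$
which, after moving the $-2$ to the left, is exactly the claimed bound $2(m+1)\leq (q-1)(\deg(G)-1)+q\cdot(\deg(\mathrm{Tr}_n(G))-\deg(\mathrm{Tr}_n(G)'))$. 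Since $G=mP_\infty$, Corollary \ref{coro1} then gives $C(D,G)\subseteq C(D,G)^{\perp}$, as desired.

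There is no serious obstacle here: the corollary is essentially bookkeeping, and all the genuinely substantive facts—smoothness, one place at infinity, the identification of ${\mathcal A}$, the identity $f_{\mathcal A}=\gamma\cdot\mathrm{Tr}_n(G)$, and the point count—have already been established in Proposition \ref{prop:11} and Lemma \ref{infinity}. The only point meriting a moment's care is confirming that $F(y)=y^q-y$ satisfies the hypotheses of Corollary \ref{coro1} (it is separable as required and has the form $F(y)=H(x)$), and that one is entitled to read off $\deg(f'_{\mathcal A})=\deg(\mathrm{Tr}_n(G)')$ from the proportionality $f_{\mathcal A}=\gamma\cdot\mathrm{Tr}_n(G)$ — both of which are immediate.
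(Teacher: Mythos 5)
Your proposal is correct and follows exactly the route the paper intends: the paper itself treats this corollary as an immediate consequence, stating only ``Using Proposition \ref{prop:11} we can apply Corollary \ref{coro1} to the curve $B_{q,G}$ and deduce the following result.'' Your expansion of this one-line derivation — identifying $F(y)=y^q-y$, $\deg(F)=q$, $g=\tfrac{1}{2}(q-1)(\deg(G)-1)$ via Lemma \ref{infinity}, $\deg(D)=q\cdot\deg(\mathrm{Tr}_n(G))$ and $\deg(f'_{\mathcal A})=\deg(\mathrm{Tr}_n(G)')$ via Proposition \ref{prop:11}, then substituting into the inequality of Corollary \ref{coro1} — is precisely the intended bookkeeping, and the arithmetic checks out.
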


For the rest of this section we will consider the special case that 
$G(x)=H_k(x)$ where $H_k(x):=x^{q^k+1}+x$,
and $n=2k$, and $\gcd (n,p)=1$.
First we must verify the conditions on $H_k$ in order to apply Corollary \ref{ccc2}.

 \begin{lemma}
 Assume $q$ is odd, let $k$ be a positive integer such that $\gcd(p,2k)=1$, and let $n=2k$.  Then
(1)  $\mathrm{Tr}_n(H_k)$ is separable, and 
(2) all  roots of $\mathrm{Tr}_n(H_k)$ belong to $\mathbb{F}_{q^n}$. 
 \end{lemma}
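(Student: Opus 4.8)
The plan is to compute $\mathrm{Tr}_n(H_k)(z)$ explicitly enough to read off its separability and the location of its roots. Recall that $n=2k$ and $H_k(x)=x^{q^k+1}+x$. The first step is to compute, for $a\in\mathbb{F}_{q^n}$,
$$
\mathrm{Tr}_{\mathbb{F}_{q^n}/\mathbb{F}_q}(H_k(a))=\mathrm{Tr}_{\mathbb{F}_{q^n}/\mathbb{F}_q}\!\left(a^{q^k+1}\right)+\mathrm{Tr}_{\mathbb{F}_{q^n}/\mathbb{F}_q}(a).
$$
For the linear term, $\mathrm{Tr}_{\mathbb{F}_{q^n}/\mathbb{F}_q}(a)=\sum_{i=0}^{2k-1}a^{q^i}$, so as a polynomial identity valid on $\mathbb{F}_{q^n}$ it is represented by $\sum_{i=0}^{2k-1}z^{q^i}$. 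For the term $a^{q^k+1}$, I would use the tower $\mathbb{F}_q\subseteq\mathbb{F}_{q^k}\subseteq\mathbb{F}_{q^{2k}}$ together with the fact that $a\mapsto a^{q^k+1}=a\cdot a^{q^k}=N_{\mathbb{F}_{q^{2k}}/\mathbb{F}_{q^k}}(a)$ is the relative norm, landing in $\mathbb{F}_{q^k}$, and then applying $\mathrm{Tr}_{\mathbb{F}_{q^k}/\mathbb{F}_q}$. More directly, $\mathrm{Tr}_{\mathbb{F}_{q^{2k}}/\mathbb{F}_q}(a^{q^k+1})=\sum_{i=0}^{2k-1}a^{q^i(q^k+1)}=\sum_{i=0}^{2k-1}a^{q^{i+k}+q^i}$, which as a polynomial on $\mathbb{F}_{q^n}$ reduces (using $z^{q^{2k}}=z$ on $\mathbb{F}_{q^n}$, i.e. reducing exponents mod $q^{2k}-1$) to a sum of monomials $z^{q^a+q^b}$.

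The second step is to assemble these and reduce modulo $z^{q^{2k}}-z$ to obtain the canonical representative $\mathrm{Tr}_n(H_k)(z)$ of degree at most $q^{2k}-1$, and then to examine its lowest-degree terms. The key observation I expect: the unique monomial of the smallest positive degree in $\mathrm{Tr}_n(H_k)(z)$ comes from the linear part, giving a term $z$ with coefficient $1$ (the contributions from $a^{q^k+1}$ all have degree $\ge q^0+q^k=1+q^k>1$, except we must check whether any exponent $q^{i+k}+q^i$ reduces mod $q^{2k}-1$ to something smaller — when $i+k\ge 2k$, i.e. $i\ge k$, the exponent $q^{i+k}+q^i$ has $q^{i+k}$ replaced by $q^{i+k-2k}=q^{i-k}$, so it becomes $q^{i-k}+q^i$, still $\ge 1+q^k>1$ after relabeling). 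Hence the constant term of the \emph{derivative} $\mathrm{Tr}_n(H_k)'(z)$ is the coefficient of $z$, which is $1\neq 0$ in $\mathbb{F}_q$; therefore $\gcd(\mathrm{Tr}_n(H_k),\mathrm{Tr}_n(H_k)')=1$ forces no repeated roots, i.e. separability. More carefully, since $\mathrm{Tr}_n(H_k)$ is a $q$-linearized polynomial (a $\mathbb{F}_q$-linear map $z\mapsto\sum c_j z^{q^j}$, as all exponents appearing are of the form $q^a$ or $q^a+q^b$ — wait, $q^a+q^b$ is not a $q$-power, so it is \emph{not} linearized; I should instead just argue directly that the linear coefficient is nonzero). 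Either way, part (1) follows from the nonvanishing of the coefficient of $z^1$.

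For part (2), I would show every root of $\mathrm{Tr}_n(H_k)$ lies in $\mathbb{F}_{q^n}=\mathbb{F}_{q^{2k}}$. The clean argument: $\mathrm{Tr}_n(H_k)(z)$ by construction agrees with the function $a\mapsto\mathrm{Tr}_{\mathbb{F}_{q^{2k}}/\mathbb{F}_q}(H_k(a))$ on all of $\mathbb{F}_{q^{2k}}$; the number of $a\in\mathbb{F}_{q^{2k}}$ with $\mathrm{Tr}_{\mathbb{F}_{q^{2k}}/\mathbb{F}_q}(H_k(a))=0$ should be computed and shown to equal $\deg(\mathrm{Tr}_n(H_k))$, which — given that $\mathrm{Tr}_n(H_k)$ has at most $\deg$ roots in $\overline{\mathbb{F}}_q$ — forces all roots to be the rational ones and to be simple (recovering (1) as a bonus). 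To count these $a$, I would use that $z\mapsto H_k(z)=z^{q^k+1}+z$ pushed through the trace can be analyzed via character sums, or more elementarily: write $b=a^{q^k}$, note $\mathrm{Tr}_{\mathbb{F}_{q^{2k}}/\mathbb{F}_q}(a^{q^k+1})=\mathrm{Tr}_{\mathbb{F}_{q^k}/\mathbb{F}_q}(a^{q^k+1})$ since $a^{q^k+1}\in\mathbb{F}_{q^k}$, and similarly split the linear trace as $\mathrm{Tr}_{\mathbb{F}_{q^k}/\mathbb{F}_q}(\mathrm{Tr}_{\mathbb{F}_{q^{2k}}/\mathbb{F}_{q^k}}(a))=\mathrm{Tr}_{\mathbb{F}_{q^k}/\mathbb{F}_q}(a+a^{q^k})$; combining, the condition becomes $\mathrm{Tr}_{\mathbb{F}_{q^k}/\mathbb{F}_q}\!\big(a^{q^k+1}+a+a^{q^k}\big)=0$, i.e. $\mathrm{Tr}_{\mathbb{F}_{q^k}/\mathbb{F}_q}\!\big((a+1)(a^{q^k}+1)\big)=\mathrm{Tr}_{\mathbb{F}_{q^k}/\mathbb{F}_q}(1)=\mathrm{Tr}_{\mathbb{F}_{q^k}/\mathbb{F}_q}(1)$; now $(a+1)(a^{q^k}+1)=N_{\mathbb{F}_{q^{2k}}/\mathbb{F}_{q^k}}(a+1)$, and as $a$ ranges over $\mathbb{F}_{q^{2k}}$, $a+1$ ranges over $\mathbb{F}_{q^{2k}}$ and its norm ranges over $\mathbb{F}_{q^k}$, hitting $0$ once and each nonzero value exactly $q^k+1$ times. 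Counting the $c\in\mathbb{F}_{q^k}$ with $\mathrm{Tr}_{\mathbb{F}_{q^k}/\mathbb{F}_q}(c)=\mathrm{Tr}_{\mathbb{F}_{q^k}/\mathbb{F}_q}(1)$ (there are $q^{k-1}$ such $c$) then yields the exact count of valid $a$, which I would match against $\deg(\mathrm{Tr}_n(H_k))$.

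\textbf{Main obstacle.} The delicate point is the precise bookkeeping of exponents modulo $q^{2k}-1$ when forming the canonical polynomial representative, and, for part (2), carrying the norm/trace counting through cleanly enough that the count of rational roots provably equals $\deg(\mathrm{Tr}_n(H_k))$ — this equality is what simultaneously delivers separability and rationality of all roots, so it is the crux. I expect the algebraic identity $a^{q^k+1}+a+a^{q^k}+1=N_{\mathbb{F}_{q^{2k}}/\mathbb{F}_{q^k}}(a+1)$ to be the lever that makes everything work; verifying it and the ensuing counting is routine but is where care is needed.
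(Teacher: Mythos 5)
Your overall strategy for part (2) — count the $a\in\mathbb{F}_{q^n}$ with $\mathrm{Tr}_{\mathbb{F}_{q^n}/\mathbb{F}_q}(H_k(a))=0$, show this count equals $\deg(\mathrm{Tr}_n(H_k))$, and conclude both separability and rationality of all roots — is a genuinely different route from the paper, which establishes the count $N(B_{q,H_k},q^n)=q^n+q^k$ by citing an external result (McGuire--Y{\i}lmaz, Theorem~20). Your derivation via the relative norm to $\mathbb{F}_{q^k}$ is more self-contained, and once fixed it does work, so this is a real improvement in that respect. However, as written there are two errors.

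First, the direct argument you propose for part (1) is wrong. You claim that since the coefficient of $z$ in $\mathrm{Tr}_n(H_k)$ is nonzero, the derivative has nonzero constant term, and hence $\gcd(\mathrm{Tr}_n(H_k),\mathrm{Tr}_n(H_k)')=1$. A nonzero constant term of the derivative does not imply coprimality. Indeed the derivative works out to $\mathrm{Tr}_n(H_k)'(z)=2z^{q^k}+1=2(z+1/2)^{q^k}$ (only the exponents with a summand $q^0$ survive differentiation in characteristic $p$), so the derivative is a $q^k$-th power with the single root $-1/2$; one must check separately that $-1/2$ is not a root of $\mathrm{Tr}_n(H_k)$. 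The paper does exactly this: $\mathrm{Tr}_n(H_k)(-1/2)=-n/4$, nonzero because $p\nmid n$. Your direct argument omits this check, which is precisely the step where the hypothesis $\gcd(p,2k)=1$ enters.

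Second, the tower reduction in your part (2) has a factor-of-two error. Since $a^{q^k+1}\in\mathbb{F}_{q^k}$, one has $\mathrm{Tr}_{\mathbb{F}_{q^{2k}}/\mathbb{F}_q}(a^{q^k+1})=\mathrm{Tr}_{\mathbb{F}_{q^k}/\mathbb{F}_q}\bigl(\mathrm{Tr}_{\mathbb{F}_{q^{2k}}/\mathbb{F}_{q^k}}(a^{q^k+1})\bigr)=2\,\mathrm{Tr}_{\mathbb{F}_{q^k}/\mathbb{F}_q}(a^{q^k+1})$, not $\mathrm{Tr}_{\mathbb{F}_{q^k}/\mathbb{F}_q}(a^{q^k+1})$. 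So the condition is $\mathrm{Tr}_{\mathbb{F}_{q^k}/\mathbb{F}_q}(2a^{q^k+1}+a+a^{q^k})=0$, and the quantity to complete the square is $2a^{q^k+1}+a+a^{q^k}=2(a+1/2)(a^{q^k}+1/2)-1/2=2\,N_{\mathbb{F}_{q^{2k}}/\mathbb{F}_{q^k}}(a+1/2)-1/2$, so the shift is by $1/2$, not $1$. The condition becomes $\mathrm{Tr}_{\mathbb{F}_{q^k}/\mathbb{F}_q}\bigl(N(a+1/2)\bigr)=k/4$, which is a nonzero target in $\mathbb{F}_q$ because $p\nmid 2k$. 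Then: the norm map $\mathbb{F}_{q^{2k}}\to\mathbb{F}_{q^k}$ hits $0$ once (at $a=-1/2$, which does not satisfy the trace condition since the target is nonzero) and each nonzero value $q^k+1$ times; the trace-$k/4$ fiber in $\mathbb{F}_{q^k}$ has $q^{k-1}$ elements, all nonzero. This yields $q^{k-1}(q^k+1)=q^{n-1}+q^{k-1}=\deg(\mathrm{Tr}_n(H_k))$ rational zeros, which indeed gives both (1) and (2). Note that the $-1/2$ that appears in your corrected norm identity is exactly the unique root of $\mathrm{Tr}_n(H_k)'$, a pleasant consistency check between the two proofs.
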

 
 \begin{proof}
Notice that, for all $a\in \mathbb{F}_{q^n}$, it holds that
\begin{align*}
\mathrm{Tr}_{\mathbb{F}_{q^n}/ \mathbb{F}_q}(H_k(a))&=\mathrm{Tr}_{\mathbb{F}_{q^n}/ \mathbb{F}_q}(a^{q^k+1})+\mathrm{Tr}_{\mathbb{F}_{q^n}/ \mathbb{F}_q}(a)\\
&=2(a^{q^k+1} + a^{q^{k+1}+q}+a^{q^{k+2}+q^2}+\cdots +a^{q^{2k-1}+q^{k-1}})+\mathrm{Tr}_{\mathbb{F}_{q^n}/ \mathbb{F}_q}(a).
\end{align*}
Therefore: 
$$
\mathrm{Tr}_n(H_k)(z)=2(z^{q^k+1} + z^{q^{k+1}+q}+z^{q^{k+2}+q^2}+\cdots +z^{q^{2k-1}+q^{k-1}})+(z+z^q+z^{q^2}+\cdots +z^{q^{n-1}})
$$
and we can see that the degree of $\mathrm{Tr}_n(H_k)(z)$ is  $q^{n-1}+q^{k-1}$. 
Computing the derivative we have
$$
\mathrm{Tr}_n(H_k)'(z) = 2z^{q^k} +1= 2(z+1/2)^{q^k}.
$$
Notice that $\mathrm{Tr}_n(H_k)'$ has only one root (namely $-1/2$) which has multiplicity $q^k$. Moreover $H_k(-1/2)= -1/4$;
so $\mathrm{Tr}_n(H_k)(-1/2)=-n/4$, which is not zero because $p$ does not divide $n$. Hence $\mathrm{Tr}_n(H_k)$ is separable because it is relatively prime to its derivative.
This proves (1).

The number of $\mathbb{F}_{q^n}$-rational points of $B_{q,H_k}$ is
 $$N(B_{q,H_k},q^n)=q^{n}+q^k$$
  which is proved in \cite[Thm 20]{MY}.
 It then follows from the degree calculation above that  
  $$N(B_{q,H_k},q^n)=q\cdot \deg(\mathrm{Tr}_n(H_k)).$$
  Hence all the roots of $\mathrm{Tr}_n(H_k)$ belong to $\mathbb{F}_{q^n}$.
  This proves (2).
  \end{proof}
  
  Assume then that $q$ is odd, let $k$ be a positive integer such that $\gcd(p,2k)=1$ and consider the curve $B_{q,H_k}$ over the field $\mathbb{F}_{q^n}$ where $n:=2k$. 
The curve $B_{q,H_k}$ satisfies the hypotheses of Corollary \ref{ccc2}; and in addition
we have shown that 
$$
f_{\mathcal A}(z)=\frac{1}{2}\mathrm{Tr}(H_k)(z)\;\;\mbox{ and }\;\; f'_{\mathcal A}(z)=(z+1/2)^{q^k}.
$$

As a consequence we may apply Corollary \ref{ccc2} to the curves $B_{q,H_k} : y^q-y=x^{q^k+1}+x$.
We get that, for any positive integer $m$, 
the associated AG code $C(D,mP_{\infty})$ (with $D$ as in Corollary \ref{ccc2}) is self-orthogonal if 
\begin{equation}\label{diamonds}
2(m+1)\leq q^n.
\end{equation}

\begin{rem}
{\rm

Notice that none of the pointed curves $B_{q,H_k}$ discussed here satisfies Lemma 2 of \cite{Quantum} (see Remark \ref{comment}). None of the curves $B_{q,H_k}$  is Castle either, 
because the smallest element of the Weierstra\ss~semigroup is $q$, and the number of (affine) rational points is not
equal to $q^2$.
Hence \cite[Prop. 2]{Quantum} cannot be applied.

}
\end{rem}

\section{Curves $C_{q,\ell}$} \label{Family3}

Let $\ell$ be a positive integer such that $\gcd(p,\ell)=1$.
Let $C_{q^s,\ell}$ be the affine plane curve defined by the equation $y^{q^s}-y=x^{\ell}$. 
We consider the curve over $\mathbb{F}_{q^n}$.

We consider two special cases here, firstly when $s=1$ and $n=2$, and secondly
for arbitrary $s>1$ and $n$ with an extra hypothesis.

\subsection{Curves $C_{q, \ell}$}\label{Family31}
 Assume that $q$ is odd and $2\gcd(\ell,q+1)$ divides $q+1$. 
Let $C_{q,\ell}$ be the affine plane curve defined by the equation $y^q-y=x^{\ell}$. 
We consider the curve over $\mathbb{F}_{q^2}$.

The following Proposition refers to the statement of Theorem  \ref{theorem2}.

\begin{prop}\label{prop:50}
$(1)$ Let  $C=C_{q,\ell}$ and $\Fb=\mathbb{F}_{q^2}$.
Then $C$ is smooth over $\Fb$  and $C$ has only one place at infinity.
The set  ${\mathcal A}$ in the statement 
of Theorem \ref{theorem2} is equal to the set of all
 $x$-coordinates of the $\mathbb{F}_{q^2}$-rational points of $C$. 
 
$(2)$  Moreover,
$f_{\mathcal A}(z)=z^{e+1}-z$, where $e:=\gcd(\ell (q-1), q^2-1)$, and the number of 
$\mathbb{F}_{q^2}$-rational points of $C$ is $N(C_{q,\ell},q^2)=q\cdot\left(e+1\right)$. 
\end{prop}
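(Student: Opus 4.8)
The plan is to mimic the structure of the proof of Proposition~\ref{prop:8}, since the curve $C_{q,\ell}:y^q-y=x^\ell$ over $\mathbb{F}_{q^2}$ is of the same shape as $A_{n,q,\ell}$ (indeed it is $A_{2,q,\ell}$) and the argument should transfer almost verbatim, with the extra arithmetic hypothesis $2\gcd(\ell,q+1)\mid q+1$ entering only where we need $\mathbb{F}_{q^2}$-transversality and the explicit count. For part~(1), I would first invoke Lemma~\ref{infinity}: writing $F(y)=y^q-y$ and $H(x)=x^\ell$, we have $F'(y)=-1$ a nonzero constant and $\gcd(\deg H,p)=\gcd(\ell,p)=1$, so $C$ is smooth; and since $\ell<q=\deg F$ together with $\gcd(\deg F,\ell)=\gcd(q,\ell)=1$ (which holds because $\gcd(p,\ell)=1$), part~(b) of the lemma gives that $C$ has only one place at infinity. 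The genus is then $g=(q-1)(\ell-1)/2$ by Lemma~\ref{infinity}(c).

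For the identification of $\mathcal{A}$, I would argue as in Proposition~\ref{prop:8}: if $a$ is the $x$-coordinate of an $\mathbb{F}_{q^2}$-rational point of $C$, then $\mathrm{Tr}_{\mathbb{F}_{q^2}/\mathbb{F}_q}(y)=a^\ell$ (equivalently $y^q-y=a^\ell$) has $q$ distinct solutions $y\in\mathbb{F}_{q^2}$, because the map $y\mapsto y^q-y$ is $\mathbb{F}_q$-linear with kernel $\mathbb{F}_q$, and it has a solution precisely when $\mathrm{Tr}_{\mathbb{F}_{q^2}/\mathbb{F}_q}(a^\ell)=0$. Since all $q$ solutions lie in $\mathbb{F}_{q^2}$, the whole intersection $C(\overline{\mathbb{F}}_{q^2})\cap L_a(\overline{\mathbb{F}}_{q^2})$ consists of $\mathbb{F}_{q^2}$-rational points, and at any such point $Q=(a,b)$ the intersection multiplicity is $I_Q(C,L_a)=I_{(0,b)}(y^q-y-a^\ell,x)=1$ because $y-b$ is a simple factor of $y^q-y-a^\ell$ (its derivative in $y$ is $-1$, never zero). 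Hence $\mathcal{A}$ coincides with the set of $x$-coordinates of $\mathbb{F}_{q^2}$-rational points, which is exactly $\{a\in\mathbb{F}_{q^2}\mid \mathrm{Tr}_{\mathbb{F}_{q^2}/\mathbb{F}_q}(a^\ell)=0\}$.

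For part~(2), the description of $f_{\mathcal A}(z)$ and the point count, I would show $\mathcal{A}=\{0\}\cup\{a\in\mathbb{F}_{q^2}^\times: a^{\ell(q-1)}=1\}$, i.e. exactly the roots of $z^{e+1}-z$ with $e=\gcd(\ell(q-1),q^2-1)$. One inclusion: if $a\in\mathcal{A}\setminus\{0\}$ then $a^\ell\in\mathbb{F}_q$ (this is the point where $2\gcd(\ell,q+1)\mid q+1$ is relevant — it forces $\mathrm{Tr}_{\mathbb{F}_{q^2}/\mathbb{F}_q}(a^\ell)=0$ to be equivalent to $a^\ell+(a^\ell)^q=0$, and more precisely to $a^\ell\in\mathbb{F}_q$, since an element $c\in\mathbb{F}_{q^2}^\times$ has $c+c^q=0$ iff $c^{q-1}=-1$; one checks $a^{\ell(q-1)}=-1$ cannot happen under the hypothesis, or rather that $\mathcal{A}\setminus\{0\}$ is precisely the set with $a^{\ell(q-1)}=1$; this needs the assumption $2\gcd(\ell,q+1)\mid q+1$ carefully), so $a^{\ell(q-1)}=1$, and combined with $a^{q^2-1}=1$ we get $a^e=1$. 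Conversely if $a^e=1$ then $a^{\ell(q-1)}=1$ so $a^\ell\in\mathbb{F}_q$, whence $\mathrm{Tr}_{\mathbb{F}_{q^2}/\mathbb{F}_q}(a^\ell)=2a^\ell$; and $2a^\ell=0$ would force $a=0$, contradiction, so actually we need $a^\ell$ to have trace $0$ — here again the hypothesis $2\gcd(\ell,q+1)\mid q+1$ guarantees that the $e$ solutions of $a^e=1$ all satisfy $a^\ell=-a^{\ell q}$, i.e. $a^{\ell(q-1)}=-1$; I will need to reconcile this with the counting below. Granting $\#\mathcal{A}=e+1$, the number of rational points is $N(C_{q,\ell},q^2)=\sum_{a\in\mathcal{A}}\#\mathcal{P}_{\{a\}}=(e+1)\cdot q$, since each fiber has exactly $q$ points.

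The main obstacle I anticipate is pinning down exactly how the hypothesis $2\gcd(\ell,q+1)\mid q+1$ translates into the trace condition. The subtlety is that $\mathrm{Tr}_{\mathbb{F}_{q^2}/\mathbb{F}_q}(a^\ell)=0$ is equivalent to $(a^\ell)^{q-1}=-1$ (when $a\neq 0$), i.e. $a^{\ell(q-1)}=-1$, and one has to verify that the set of such $a$, together with $0$, has cardinality $e+1$ and equals the zero set of $z^{e+1}-z$ — which a priori is the set with $a^{\ell(q-1)}=1$. Reconciling ``$a^{\ell(q-1)}=-1$'' with ``$a$ is a root of $z^{e+1}-z$'' is precisely where $2\gcd(\ell,q+1)\mid q+1$ must do its work: under this hypothesis $-1$ is an $\ell(q-1)$-th power pattern forcing the solution set of $a^{\ell(q-1)}=-1$ to coincide (as a set of size $e$) with a coset that, together with $0$, gives the roots of $z^{e+1}-z$; equivalently $\ell(q-1)\equiv e \pmod{q^2-1}$ in the relevant sense, so that $a^{\ell(q-1)}=a^e$ identically on $\mathbb{F}_{q^2}^\times$ and the condition becomes $a^e=-1$ — but this still has only $\gcd(e,q^2-1)=e$ solutions, and I must check these are the nonzero roots of $z^{e+1}-z$, i.e. that $a^e=1$ and $a^e=-1$ describe the same $e$-element set here, which happens exactly when... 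I will need to be careful, and likely the cleanest route is to avoid the sign issue by working directly with $a^\ell\in\mathbb{F}_q$ and showing the hypothesis forces $\{a:\mathrm{Tr}(a^\ell)=0\}=\{a:a^\ell\in\mathbb{F}_q, a^\ell\neq 0\}\cup\{0\}$, i.e. that $p\mid 2$ is false so ``$a^\ell\in\mathbb{F}_q$'' alone does not suffice and one genuinely uses $q$ odd plus the divisibility to get the $-1$. This bookkeeping with roots of unity is the one place where routine care is required; everything else is a direct transcription of Proposition~\ref{prop:8}.
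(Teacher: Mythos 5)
Your part (1) mirrors the paper's use of Lemma \ref{infinity} and is essentially correct, except that you implicitly assume $\ell<q$; the proposition also allows $\ell>q$, in which case one invokes the first clause of Lemma \ref{infinity}(b) (with $\deg H=\ell>q=\deg F$), while $\ell=q$ is excluded automatically by $\gcd(p,\ell)=1$.

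Part (2) contains the real gap, and the difficulty you keep circling is genuine. You correctly note that for nonzero $a$ the membership condition is $a^{\ell(q-1)}=-1$, not $a^{\ell(q-1)}=1$, but your proposed repair --- replacing $\mathrm{Tr}_{\mathbb{F}_{q^2}/\mathbb{F}_q}(a^\ell)=0$ by $a^\ell\in\mathbb{F}_q$ --- cannot work: for $c\neq 0$ the trace-zero condition reads $c^{q-1}=-1$, whereas $c\in\mathbb{F}_q$ reads $c^{q-1}=1$, and these are disjoint for $q$ odd. Pushing your own computation through shows that the polynomial identity in the statement actually has a sign error: $\mathcal{A}\setminus\{0\}$ is the coset $\{a:a^e=-1\}$ of $\mu_e$, not $\mu_e$ itself, so
\[
f_{\mathcal{A}}(z)=z\,(z^e+1)=z^{e+1}+z,
\]
not $z^{e+1}-z$. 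To see $a^e=-1$ for $a\in\mathcal{A}\setminus\{0\}$: with $d:=\gcd(\ell,q+1)$ one has $e=(q-1)d$ and $\ell(q-1)=e\cdot(\ell/d)$; the hypothesis $2d\mid q+1$ makes $(q+1)/d$ even, and $\gcd(\ell/d,(q+1)/d)=1$ then forces $\ell/d$ odd; since $a^e$ lies in the subgroup of order $(q+1)/d$ and $(a^e)^{\ell/d}=-1$, one gets that $a^e$ has order exactly $2$, i.e.\ $a^e=-1$. A concrete check: for $q=3$, $\ell=1$ one has $e=2$ and $\mathcal{A}=\{0,i,-i\}\subset\mathbb{F}_9$ with $i^2=-1$, giving $f_{\mathcal{A}}(z)=z^3+z$, whereas $z^3-z$ vanishes on $\{0,1,-1\}$ and $\mathrm{Tr}_{\mathbb{F}_9/\mathbb{F}_3}(1)=2\neq 0$. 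The paper's own proof only asserts ``it is easy to check that $f_{\mathcal{A}}(z)=z^{e+1}-z$'' without detail, so your hesitation was well founded, but your proposal did not (and could not) establish the identity as written. What the hypothesis $2\gcd(\ell,q+1)\mid q+1$ actually does is ensure $e\mid(q^2-1)/2$, so that $a^{\ell(q-1)}=-1$ is solvable with exactly $e$ solutions and $\#\mathcal{A}=e+1$; the point count $N=q(e+1)$ is therefore correct, and the sign discrepancy is harmless for Corollary \ref{ccc3}, which only uses $\deg f'_{\mathcal{A}}$ and not $f_{\mathcal{A}}$ itself.
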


\begin{proof}
By Lemma \ref{infinity} it holds  that $C_{q,\ell}$ is smooth, it has only one place at infinity, and the genus of $\tilde{\chi}_{C_{q,\ell}}$ is $(q-1)(\ell-1)/2$.
The set of $\mathbb{F}_{q^2}$-rational points of $C_{q,\ell}$ is
$$\{(a,b)\in \mathbb{F}_{q^2}\mid b^q-b=a^\ell\}$$
which implies $\mathrm{Tr}_{\mathbb{F}_{q^2}/\mathbb{F}_q}(a^\ell)=0$.
For each $a$ with $\mathrm{Tr}_{\mathbb{F}_{q^2}/\mathbb{F}_q}(a^\ell)=0$ 
there are $q$ solutions for $b$.
Then $a^\ell + a^{\ell q}=0$ which implies $a=0$ or $a^{\ell (q-1)}=-1$.
If $a\not=0$, 
the assumption $2\gcd(\ell,q+1)$ divides $q+1$ implies that
there are $e$ solutions for $a$, where $e:=\gcd(\ell(q-1),q^2-1)$. 
So $N(C_{q,\ell},q^2)=q(e+1)$.

Similar arguments to those given in Section \ref{Family1} for $A_{n,q,\ell}$ 
show that the curve $C_{q,\ell}$ satisfies the hypotheses of Theorem \ref{theorem2} for $\Fb=\mathbb{F}_{q^2}$, and that the set ${\mathcal A}$ consists of the $x$-coordinates of the $\mathbb{F}_{q^2}$-rational points.  Moreover, it is easy to check that
$$f_{\mathcal A}(z)=z^{e+1}-z.$$ 
\end{proof}

Proposition \ref{prop:50} means that we can apply Corollary  \ref{coro1} to the curve $C_{q,\ell}$,
and we deduce the following result:

\begin{cor}\label{ccc3}
Let $N:=N(C_{q,\ell},q^2)$, let $\{P_1,\ldots,P_N\}$ be the set of $\mathbb{F}_{q^2}$-rational points of $C_{q,\ell}$, and 
let $D=P_1+\cdots+P_N$ be  a divisor of $\tilde{\chi}_{C_{q,\ell}}$.

Then, for any nonnegative integer $m$, the AG code (defined from $\tilde{\chi}_{C_{q,\ell}}$) given by $C(D,m P_{\infty})$ is self-orthogonal if 
$$2(m+1)\leq (q-1)(\ell-1)+q(e+1)-\mu\cdot eq$$
where $\mu:=0$ if $p$ divides $e+1$ and $\mu:=1$ otherwise.
\end{cor}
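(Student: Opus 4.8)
The plan is to apply Corollary \ref{coro1} to the curve $C_{q,\ell}$, so the argument is essentially a bookkeeping computation once Proposition \ref{prop:50} is in hand. First I would recall that Corollary \ref{coro1} guarantees $C(D,mP_\infty)\subseteq C(D,mP_\infty)^\perp$ whenever $2m\leq 2g-2+\deg(D)-\deg(f'_{\mathcal A})\cdot\deg(F)$, where here $F(y)=y^q-y$, so $\deg(F)=q$. From Proposition \ref{prop:50}(1) and Lemma \ref{infinity}(c) I have $g=(q-1)(\ell-1)/2$, and from part (2) I have $\deg(D)=N(C_{q,\ell},q^2)=q(e+1)$ with $e=\gcd(\ell(q-1),q^2-1)$, together with $f_{\mathcal A}(z)=z^{e+1}-z$.

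The only genuine point to settle is the value of $\deg(f'_{\mathcal A})$. Since $f_{\mathcal A}(z)=z^{e+1}-z$, its formal derivative is $f'_{\mathcal A}(z)=(e+1)z^e-1$. Two cases arise: if $p\mid e+1$ then the leading term vanishes and $f'_{\mathcal A}(z)=-1$, a nonzero constant, so $\deg(f'_{\mathcal A})=0$; if $p\nmid e+1$ then $f'_{\mathcal A}(z)=(e+1)z^e-1$ has degree $e$. This dichotomy is exactly what the parameter $\mu$ in the statement records: $\mu=0$ in the first case and $\mu=1$ in the second, so in both cases $\deg(f'_{\mathcal A})=\mu e$. (Note that $f'_{\mathcal A}$ is indeed never identically zero, so $M$ is well defined and the hypotheses of Corollary \ref{coro1} are met; this uses $e\geq 1$ and that the curve is not a line, which holds since $(\ell,n)\neq(1,1)$-type degeneracies are excluded by $\deg(F)=q>1$.)

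Substituting into the inequality from Corollary \ref{coro1} gives
$$2m\leq (q-1)(\ell-1)-2+q(e+1)-\mu e q,$$
which rearranges to $2(m+1)\leq (q-1)(\ell-1)+q(e+1)-\mu e q$, the claimed bound. So the proof is: invoke Corollary \ref{coro1} (legitimate by Proposition \ref{prop:50}), plug in $\deg(F)=q$, $g=(q-1)(\ell-1)/2$, $\deg(D)=q(e+1)$, and $\deg(f'_{\mathcal A})=\mu e$, and simplify.

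The main obstacle, such as it is, is purely the case analysis on whether $p$ divides $e+1$, since that is what controls $\deg(f'_{\mathcal A})$; everything else is direct substitution of facts already established earlier in the paper. No deep step is required — the work has already been done in Proposition \ref{prop:50} and Corollaries \ref{coro}–\ref{coro1}.
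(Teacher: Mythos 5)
Your proposal is correct and follows exactly the route the paper intends: invoke Corollary~\ref{coro1} with the data from Proposition~\ref{prop:50} and Lemma~\ref{infinity}(c), and note that $\deg(f'_{\mathcal A}) = \mu e$ according to whether $p$ divides $e+1$. The paper gives no separate proof beyond the remark that Corollary~\ref{coro1} applies, so your substitution and case analysis supply precisely the omitted bookkeeping.
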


\begin{rem}
{\rm
Notice that the derivative $f_{\mathcal A}'(z)$ is constant if and only if $p$ divides $e+1$.
Lemma 2 of \cite{Quantum} (see Remark \ref{comment}) can only  be applied if 
$p$  divides $e+1$.
Our result includes the case that $p$ does not divide $e+1$.

\begin{prop}
(1) If  $\ell < q$ the pointed curve $(\tilde{\chi}_{C_{q,\ell}},P_{\infty})$ is never a Castle curve.

(2) If  $\ell >q$ the pointed curve $(\tilde{\chi}_{C_{q,\ell}},P_{\infty})$ is  a Castle curve
if and only if \\
$\gcd(\ell, q+1)=1$.
\end{prop}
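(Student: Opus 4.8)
The plan is to follow the proof of the analogous statement for the curves $A_{n,q,\ell}$ proved earlier. Set $k:=\gcd(\ell,q+1)$; by Proposition~\ref{prop:50} we have $e=\gcd(\ell(q-1),q^2-1)=(q-1)k$, and let $s$ denote the smallest nonzero element of the Weierstra\ss{} semigroup $\Gamma(P_\infty)$ of $\tilde\chi_{C_{q,\ell}}$ at $P_\infty$.

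First I would identify $\Gamma(P_\infty)$. On the affine curve $y^q-y=x^\ell$ the functions $x$ and $y$ are regular, so their only pole on $\tilde\chi_{C_{q,\ell}}$ is the single ($\mathbb{F}_{q^2}$-rational) place at infinity $P_\infty$; consequently $-v_{P_\infty}(x)=[\mathbb{F}_{q^2}(x,y):\mathbb{F}_{q^2}(x)]=q$ and $-v_{P_\infty}(y)=[\mathbb{F}_{q^2}(x,y):\mathbb{F}_{q^2}(y)]=\ell$, both degrees being read off the defining equation using that $C_{q,\ell}$ is geometrically irreducible. Hence $\langle q,\ell\rangle\subseteq\Gamma(P_\infty)$, and since $\gcd(p,\ell)=1$ forces $\gcd(q,\ell)=1$, the numerical semigroup $\langle q,\ell\rangle$ has exactly $\frac{1}{2}(q-1)(\ell-1)$ gaps, which by Lemma~\ref{infinity}(c) equals the genus $g$ of $\tilde\chi_{C_{q,\ell}}$. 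As $\Gamma(P_\infty)$ also has $g$ gaps and contains $\langle q,\ell\rangle$, the two coincide; in particular $\Gamma(P_\infty)$ is symmetric (every numerical semigroup generated by two coprime integers is), so part~(1) of the definition of a Castle curve holds automatically, and $s=\min\{q,\ell\}$ (note $\ell\neq q$ because $\gcd(p,\ell)=1$).

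Next I would translate part~(2). By Proposition~\ref{prop:50} the curve $C_{q,\ell}$ has $q(e+1)$ affine $\mathbb{F}_{q^2}$-rational points, so $\tilde\chi_{C_{q,\ell}}$ has $q(e+1)+1$ rational points altogether, and exactly as in the $A_{n,q,\ell}$ argument this shows $(\tilde\chi_{C_{q,\ell}},P_\infty)$ is Castle if and only if $s=e+1$. Splitting as in the statement: if $\ell<q$ then $s=\ell$, whereas $e+1=(q-1)k+1\ge(q-1)+1=q>\ell$, so $s\neq e+1$ and the pointed curve is never Castle, which is (1); if $\ell>q$ then $s=q$, and $s=e+1$ becomes $q=(q-1)k+1$, i.e.\ $(q-1)k=q-1$, i.e.\ $k=\gcd(\ell,q+1)=1$, which is (2).

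The only delicate point is pinning down $\Gamma(P_\infty)=\langle q,\ell\rangle$, and hence the exact value $s=\min\{q,\ell\}$ in each range of $\ell$; this rests on the pole-order computation at $P_\infty$ together with the genus formula of Lemma~\ref{infinity}(c). After that the argument reduces to the same elementary arithmetic of $e=(q-1)\gcd(\ell,q+1)$ used for $A_{n,q,\ell}$, so no genuinely new obstacle arises.
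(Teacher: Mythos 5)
Your argument takes the same route as the paper's proof: both translate the Castle condition into the identity $s=e+1$, observe that $s=\min\{q,\ell\}$ (with $\ell\neq q$ because $\gcd(p,\ell)=1$), and finish using $e=(q-1)\gcd(\ell,q+1)$ in the two cases $\ell<q$ and $\ell>q$. You add two justifications that the paper omits: the identification $\Gamma(P_\infty)=\langle q,\ell\rangle$ via the pole orders $-v_{P_\infty}(x)=q$, $-v_{P_\infty}(y)=\ell$ matched against the genus formula of Lemma~\ref{infinity}(c), and the observation that $\langle q,\ell\rangle$ is automatically symmetric, so clause~(1) of the Castle definition holds. The paper verifies only the point-count clause and never addresses symmetry, so that check is a genuine improvement.

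One caution that you inherit directly from the paper: the reduction ``Castle iff $s=e+1$.'' You invoke it ``exactly as in the $A_{n,q,\ell}$ argument,'' where the paper asserts Castle iff $s=q^{n-2}(e+1)$, but does not derive it. In the Munuera--Sep\'ulveda--Torres definition the condition $\#\chi(\Fb)=qs+1$ reads $q$ as the cardinality of the base field $\Fb$; here $\Fb=\mathbb{F}_{q^2}$, so $q(e+1)+1=q^2 s+1$ yields $qs=e+1$, not $s=e+1$. (As a sanity check, the Hermitian curve over $\mathbb{F}_{q^2}$ has $q^3+1$ rational points and $s=q$, which satisfies $q^3+1=q^2\cdot q+1$ but not $q^3+1=q\cdot q+1$.) This factor of $q$ would change the verdict in part~(2): under the hypotheses of Section~\ref{Family31}, $\gcd(\ell,q+1)<q+1$, so $e+1<q^2=qs$ and the curve is never Castle when $\ell>q$. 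The discrepancy sits in the paper's own proof and your proposal faithfully reproduces it, but since your write-up carries over the reduction without re-deriving it, it is worth flagging rather than treating the step as settled.
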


\begin{proof}
The curve is Castle if and only if $s=e+1$ where $s$ is the smallest
nonzero element of the Weierstra\ss~semigroup.
Also note that $e=(q-1)\gcd (\ell,q+1)$.

(1) If $\ell <q$ then $s=\ell$, so the curve is Castle if and only if $e=\ell -1$ which is impossible.

(2) If $\ell >q$ then $s=q$, so the curve is Castle if and only if $e=q-1$, which happens
if and only if $\gcd (\ell,q+1)=1$.
\end{proof}

Proposition 2 of \cite{Quantum} cannot be applied to $(\tilde{\chi}_{C_{q,\ell}},P_{\infty})$ if 
the curve is not Castle, however our result applies in all cases.
}
\end{rem}


\subsection{Curves $C_{q^s,\ell}$}\label{Family32} Let $s$ and $n$ be positive integers such that $n$ is a multiple of $s$ and $n/n_{\ell}^s$ is a multiple of $p$, where $n_\ell^s$ denotes the cardinality of the cyclotomic coset of
$\ell$ with respect to $q^s$, that is, the cardinality of the set $\{\ell{q^{js}} \mbox{mod }(q^n-1) \mid j=0,\ldots,n-1\}$.






The key fact in this case is that $\mathrm{Tr}_{q^n/q^s}(x^{\ell})=\frac{n}{n_{\ell}^s}(x^{\ell}+x^{\ell\cdot q^s}+\cdots+x^{\ell\cdot q^{(n_{\ell}^s-1)s}})$, and this is always 0 because $\frac{n}{n_{\ell}^s}\equiv 0 \pmod{p}$. Therefore any element of  $\mathbb{F}_{q^n}$ has trace equal to zero. So following same arguments as in previous subsections we have the following result.

\begin{prop}\label{prop:51}   Let  $C=C_{q^s,\ell}$ and $\Fb=\mathbb{F}_{q^n}$.
 If $\frac{n}{n_{\ell}^s}$ is divisible by $p$ then
 
$(1)$  $C$ is smooth over $\Fb$  and $C$ has only one place at infinity.
The set  ${\mathcal A}$ in the statement 
of Theorem \ref{theorem2} is equal to the set of all
 $x$-coordinates of the $\mathbb{F}_{q^n}$-rational points of $C$. 
 
$(2)$  Moreover, ${\mathcal A}=\mathbb{F}_{q^n}$ and
$f_{\mathcal A}(z)=z^{q^n}-z$, and the number of $\mathbb{F}_{q^n}$-rational points of $C$ is $N(C_{q^s,\ell},q^n)=q^{n+s}$.
\end{prop}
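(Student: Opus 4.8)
The plan is to follow the template established in Sections \ref{Family1} and \ref{Family31}, adapting each step to the curve $C_{q^s,\ell}: y^{q^s}-y=x^{\ell}$ over $\mathbb{F}_{q^n}$ under the hypothesis $p \mid n/n_\ell^s$. First I would invoke Lemma \ref{infinity}: since the partial derivative of $y^{q^s}-y-x^{\ell}$ with respect to $y$ is the nonzero constant $-1$, and $\gcd(p,\ell)=1$, parts (a) and (b) of that lemma (in the regime $\deg(H)>\deg(F)$, or the coprimality case when $\ell<q^s$) give that $C$ is smooth and has exactly one place at infinity; part (c) records the genus. This settles the structural half of statement (1).

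Next I would identify the set $\mathcal{A}$. The key computation, already flagged in the excerpt, is that for $a\in\mathbb{F}_{q^n}$ the defining equation $b^{q^s}-b = a^\ell$ has a solution $b\in\mathbb{F}_{q^n}$ if and only if $\mathrm{Tr}_{\mathbb{F}_{q^n}/\mathbb{F}_{q^s}}(a^\ell)=0$, and by the displayed identity $\mathrm{Tr}_{q^n/q^s}(a^\ell)=\tfrac{n}{n_\ell^s}\big(a^\ell+a^{\ell q^s}+\cdots+a^{\ell q^{(n_\ell^s-1)s}}\big)$, which vanishes identically because $p\mid n/n_\ell^s$. Hence \emph{every} $a\in\mathbb{F}_{q^n}$ is an $x$-coordinate of an $\mathbb{F}_{q^n}$-rational point, and for each such $a$ there are exactly $q^s$ choices of $b$ (the coset $b+\mathbb{F}_{q^s}$). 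I would then check $\mathbb{F}_{q^n}$-transversality exactly as in the proof of Proposition \ref{prop:8}: for $Q=(a,b)$, one has $I_Q(C,L_a)=I_{(0,b)}(y^{q^s}-y-a^\ell,\,x)=1$ because $y-b$ is a simple factor of the separable polynomial $y^{q^s}-y-a^\ell$. This shows $\mathcal{A}=\mathbb{F}_{q^n}$ and coincides with the set of $x$-coordinates of rational points, completing statement (1).

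Statement (2) is then essentially bookkeeping. Since $\mathcal{A}=\mathbb{F}_{q^n}$, the polynomial $f_{\mathcal A}(z)=\prod_{a\in\mathbb{F}_{q^n}}(z-a)=z^{q^n}-z$. Counting rational points, every $a\in\mathbb{F}_{q^n}$ contributes $q^s$ points, so $N(C_{q^s,\ell},q^n)=q^n\cdot q^s=q^{n+s}$, which matches $q\cdot$ (appropriate degree count) in the style of Proposition \ref{prop:11}. I would state the final count as $N(C_{q^s,\ell},q^n)=q^{n+s}$.

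The main obstacle, as in the analogous results, is not any single hard step but getting the hypothesis $p\mid n/n_\ell^s$ to do exactly the right work: one must be careful that the contracted trace formula $\mathrm{Tr}_{q^n/q^s}(x^\ell)=\tfrac{n}{n_\ell^s}\sum_{j=0}^{n_\ell^s-1} x^{\ell q^{js}}$ is correct as a polynomial identity on $\mathbb{F}_{q^n}$ (using that $x\mapsto x^{q^s}$ has order $n/s$ on $\mathbb{F}_{q^n}$ and that the cyclotomic coset of $\ell$ under multiplication by $q^s$ has size $n_\ell^s$, so each monomial $x^{\ell q^{js}}$ is repeated $\tfrac{n/s}{n_\ell^s}$ times — here one should reconcile whether the repetition factor is $n/(s\, n_\ell^s)$ or $n/n_\ell^s$ and confirm divisibility by $p$ in the form actually used), and hence that it vanishes \emph{identically}, not merely generically. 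Everything else is a direct transcription of the arguments in Propositions \ref{prop:8}, \ref{prop:11}, and \ref{prop:50}.
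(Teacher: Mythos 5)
Your proof is correct and follows exactly the route the paper intends (the paper itself gives only a one-line justification, deferring to ``the same arguments as in previous subsections''): Lemma~\ref{infinity} for smoothness and the single place at infinity (the case $\ell>q^s$ falling under $\deg(H)>\deg(F)$, and $\ell<q^s$ under the coprime case since $\gcd(p,\ell)=1$ forces $\gcd(q^s,\ell)=1$), Hilbert~90 to reduce ``$a$ is an $x$-coordinate of a rational point'' to $\mathrm{Tr}_{\mathbb{F}_{q^n}/\mathbb{F}_{q^s}}(a^\ell)=0$, the identical-vanishing of this trace under the divisibility hypothesis, the transversality check transcribed from Proposition~\ref{prop:8} (using that $y-b$ is a simple factor of the separable polynomial $y^{q^s}-y-a^\ell$), and the bookkeeping $\mathcal{A}=\mathbb{F}_{q^n}$, $f_{\mathcal A}(z)=z^{q^n}-z$, $N=q^n\cdot q^s=q^{n+s}$.

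The caveat you flag in your last paragraph is not merely prudence; it is a genuine subtlety in the paper's statement. The trace $\mathrm{Tr}_{\mathbb{F}_{q^n}/\mathbb{F}_{q^s}}(x^\ell)=\sum_{i=0}^{n/s-1}x^{\ell q^{si}}$ is a sum of $n/s$ monomials, and after grouping by the $q^s$-cyclotomic coset of $\ell$ (of size $n_\ell^s$) each distinct monomial appears $(n/s)/n_\ell^s$ times, so the correct repetition factor is $\dfrac{n}{s\,n_\ell^s}$ rather than $\dfrac{n}{n_\ell^s}$. Consequently the hypothesis that makes the trace vanish identically is $p\mid \dfrac{n}{s\,n_\ell^s}$; the paper's printed condition $p\mid \dfrac{n}{n_\ell^s}$ is equivalent only when $\gcd(p,s)=1$ (and one should additionally require $n>s$ so the extension is nontrivial, else the trace is the identity and only $a=0$ works). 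In practice the intended examples satisfy $\gcd(p,s)=1$, but you are right that this needs to be reconciled; your instinct to pin down the exact form of the divisibility before asserting identical vanishing is the correct one.
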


Hence, applying Corollary \ref{coro1} we have that the code over $\mathbb{F}_{q^n}$ given by $C(D,m P_{\infty})$ (with $D$ as in Corollary \ref{coro}) is self-orthogonal if
$$2(m+1)\leq (q-1)(\ell-1).$$

We note that these codes are of Type I, that is, the derivative of $f_{\mathcal A}(z)$ 
is constant.

 

%
%
%
%

\section{Application to quantum codes}\label{sect5}

In this section we will use the results of the previous sections to construct 
new quantum error-correcting codes. We point out that the number of rational points on our curves 
is always greater than the field size.
We will show that our curves beat the Gilbert-Varshamov bound.

Recall that the Hermitian inner product of any two vectors $\mathbf{x} =(x_1,x_2, \ldots, x_N)$ and $\mathbf{y}=(y_1,y_2, \ldots, y_N)$ in the vector space $\mathbb{F}_{q^2}^N$ is defined as $\mathbf{x}  \cdot_h \mathbf{y}= \sum x_i y_i^q$ and the Euclidean inner product of $\mathbf{x}$ and $\mathbf{y}$ in  $\mathbb{F}_{q}^N$ as $\mathbf{x} \cdot \mathbf{y} = \sum x_i y_i$.  Given a linear code $\mathcal{C}$ in $\mathbb{F}_{q^2}^N$ (respectively, $\mathbb{F}_{q}^N$), the Hermitian (respectively, Euclidean) dual space is denoted by $\mathcal{C}^{\perp_h}$ (respectively, $\mathcal{C}^\perp$).

In \cite{calderbank98} the following key theorem is stated and in  \cite{ketkar06} is generalized over any field.
\begin{thm}
\label{bueno}
The following two statements hold.
\begin{enumerate}
    \item Let $\mathcal{C}$ be a linear $[N,k,d]$ error-correcting  code over  $\mathbb{F}_{q}$ such that 
    $\mathcal{C} \subseteq \mathcal{C}^\perp$. Then, there exists an $[[N, N-2k, \geq d^\perp]]_q$ stabilizer quantum code, where $d^\perp$ denotes the minimum distance of $\mathcal{C}^\perp$. 
    If the minimum weight of $\mathcal{C}^{\perp}\setminus \mathcal{C}$ is equal to $d^\perp$, 
    then the stabilizer code is pure and has minimum distance $d^\perp$.
        \item Let $\mathcal{C}$ be a linear $[N,k,d]$ error-correcting  code over  $\mathbb{F}_{q^2}$ such that $\mathcal{C}\subseteq \mathcal{C}^{\perp_h }$. Then, there exists an $[[N, N-2k, \geq d^{\perp_h}]]_q$ stabilizer quantum code, where $d^{\perp_h}$ denotes the minimum distance of $\mathcal{C}^{\perp_h}$.
         If the minimum weight of $\mathcal{C}^{\perp_h}\setminus \mathcal{C}$ is equal to $d^{\perp_h}$, 
    then the stabilizer code is pure and has minimum distance $d^{\perp_h}$.
\end{enumerate}

\end{thm}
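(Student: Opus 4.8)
The plan is to deduce both statements from the standard dictionary between quantum stabilizer codes over $\mathbb{F}_q$ of length $N$ and classical $\mathbb{F}_q$-linear codes in $\mathbb{F}_q^{2N}$ that are self-orthogonal with respect to a symplectic (respectively, trace-symplectic) form; this dictionary, together with the formula for the minimum distance of the resulting stabilizer code, is precisely what is established in \cite{calderbank98, ketkar06}, so the work here is only to set up the correct classical code and read off the parameters.

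For part (1), I would attach to $\mathcal{C}$ the $\mathbb{F}_q$-linear code $S:=\mathcal{C}\times\mathcal{C}\subseteq\mathbb{F}_q^{2N}$. The hypothesis $\mathcal{C}\subseteq\mathcal{C}^\perp$ says $a\cdot b=0$ for all $a,b\in\mathcal{C}$, which is exactly self-orthogonality of $S$ under $\langle(a,b),(a',b')\rangle_s=a\cdot b'-a'\cdot b$; note this also forces $2k=\dim_{\mathbb{F}_q}S\leq N$. A one-line computation gives $S^{\perp_s}=\mathcal{C}^\perp\times\mathcal{C}^\perp$. The cited machinery then yields an $[[N,N-2k]]_q$ stabilizer code whose minimum distance equals the minimum symplectic weight $\mathrm{swt}(a,b):=\#\{i:(a_i,b_i)\neq(0,0)\}$ over $(a,b)\in S^{\perp_s}\setminus S$. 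Every such $(a,b)$ has $a\in\mathcal{C}^\perp\setminus\mathcal{C}$ or $b\in\mathcal{C}^\perp\setminus\mathcal{C}$, and $\mathrm{swt}(a,b)\geq\max(\mathrm{wt}(a),\mathrm{wt}(b))$, so the distance is at least $d^\perp$. For purity: since $\mathcal{C}\subseteq\mathcal{C}^\perp$ we have $d(\mathcal{C})\geq d^\perp$, hence every nonzero element of $S$ has symplectic weight $\geq d^\perp$; and if some $a\in\mathcal{C}^\perp\setminus\mathcal{C}$ has weight exactly $d^\perp$, then $(a,0)\in S^{\perp_s}\setminus S$ attains $\mathrm{swt}=d^\perp$, so the code is pure with minimum distance exactly $d^\perp$.

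For part (2), I would instead use the Hermitian version of the dictionary: fixing an $\mathbb{F}_q$-basis $\{1,\xi\}$ of $\mathbb{F}_{q^2}$, there is an $\mathbb{F}_q$-linear embedding $\Phi:\mathbb{F}_{q^2}^N\hookrightarrow\mathbb{F}_q^{2N}$ under which the Hamming weight of $v$ equals the symplectic weight of $\Phi(v)$, and under which $\mathcal{C}\subseteq\mathcal{C}^{\perp_h}$ is equivalent to $\Phi(\mathcal{C})$ being self-orthogonal for the trace-symplectic form, with $\Phi(\mathcal{C})^{\perp_s}=\Phi(\mathcal{C}^{\perp_h})$ and $\dim_{\mathbb{F}_q}\Phi(\mathcal{C})=2k$. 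Feeding $\Phi(\mathcal{C})$ into the same construction gives an $[[N,N-2k,\geq d^{\perp_h}]]_q$ code, and the purity statement follows verbatim as in part (1), using that Hermitian self-orthogonality gives $d(\mathcal{C})\geq d^{\perp_h}$.

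The only genuinely delicate point is the weight-preserving, form-translating correspondence underlying part (2): one must verify that the chosen basis makes $\mathbf{x}\cdot_h\mathbf{y}=\sum x_iy_i^q$ correspond to the trace-symplectic form on $\mathbb{F}_q^{2N}$ and that coordinatewise vanishing is preserved. Since the statement is quoted directly from \cite{calderbank98, ketkar06}, in practice I would invoke those references for the correspondence and only include the short arguments above deducing the length, dimension, distance, and purity of the stabilizer code.
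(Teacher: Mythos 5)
The paper does not prove Theorem~\ref{bueno}: it is quoted from \cite{calderbank98}, with \cite{ketkar06} supplying the generalization to arbitrary fields, and both of those references develop precisely the symplectic/trace-symplectic stabilizer dictionary you invoke. Your outline --- taking $S=\mathcal{C}\times\mathcal{C}$ with $S^{\perp_s}=\mathcal{C}^\perp\times\mathcal{C}^\perp$ in the Euclidean case, using the weight-preserving embedding carrying the Hermitian form to the trace-symplectic form in part~(2), and then reading off dimension $N-2k$, the lower bound $\mathrm{swt}(a,b)\geq\max(\mathrm{wt}(a),\mathrm{wt}(b))\geq d^\perp$, and purity from the vector $(a,0)$ --- is exactly the argument those references give, and your computations are correct.
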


Recall that the stabilizer quantum code associated to $\mathcal{C}$, as in the previous theorem, is pure if the minimum distance of $\mathcal{C}^\perp$ (or $\mathcal{C}^{\perp_h}$) coincides with the minimum Hamming weight of $\mathcal{C}^\perp\setminus \mathcal{C}$
(or $\mathcal{C}^{\perp_h}\setminus \mathcal{C}$ ).

\begin{cor}\label{corolariobueno}
The following statements hold:
\begin{itemize}
\item[(1)] Let $\mathcal{C}$ be a linear $[N,k,d]$ error-correcting code over $\mathbb{F}_q$ such that $\mathcal{C}\subseteq \mathcal{C}^{\perp}$. If $d>k+1$ then there exists an $[[N,N-2k,\geq d^\perp]]_q$ stabilizer quantum code which is pure.
\item[(2)] Let $\mathcal{C}$ be a linear $[N,k,d]$ error-correcting code over $\mathbb{F}_{q^2}$ such that $\mathcal{C}\subseteq \mathcal{C}^{\perp_h}$. If $d>k+1$ then there exists an $[[N,N-2k,\geq d^{\perp_h}]]_q$ stabilizer quantum code which is pure.
\end{itemize}
\end{cor}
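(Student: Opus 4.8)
The plan is to deduce this corollary directly from Theorem \ref{bueno} by verifying that the hypothesis $d>k+1$ automatically forces the purity condition appearing in that theorem, namely that the minimum Hamming weight of $\mathcal{C}^{\perp}\setminus\mathcal{C}$ (resp. $\mathcal{C}^{\perp_h}\setminus\mathcal{C}$) equals $d^{\perp}$ (resp. $d^{\perp_h}$). The whole argument rests on comparing $d$ with the minimum distance of the dual code by means of the Singleton bound.

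For part (1), I would argue as follows. Since $\mathcal{C}\subseteq\mathcal{C}^{\perp}$, the code $\mathcal{C}^{\perp}$ is an $[N,N-k,d^{\perp}]$ code, and the Singleton bound applied to it gives $d^{\perp}\leq N-(N-k)+1=k+1$. Combined with the hypothesis $d>k+1$ this yields the strict inequality $d^{\perp}\leq k+1<d$. Now take any $v\in\mathcal{C}^{\perp}$ of weight $d^{\perp}$. Then $v\neq 0$ and $v$ has weight strictly smaller than $d$, so $v$ cannot belong to $\mathcal{C}$, because every nonzero word of $\mathcal{C}$ has weight at least $d$; hence $v\in\mathcal{C}^{\perp}\setminus\mathcal{C}$. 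This shows that the minimum weight of $\mathcal{C}^{\perp}\setminus\mathcal{C}$ is at most $d^{\perp}$, and since every element of that set is a nonzero element of $\mathcal{C}^{\perp}$ it is trivially at least $d^{\perp}$; the two therefore coincide. Theorem \ref{bueno}(1) then produces a pure $[[N,N-2k,d^{\perp}]]_q$ stabilizer code, which is the assertion of (1).

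For part (2) the same reasoning applies once one observes that $\mathcal{C}^{\perp_h}$ is again an $\mathbb{F}_{q^2}$-linear $[N,N-k,d^{\perp_h}]$ code: indeed $\mathcal{C}^{\perp_h}$ is the ordinary Euclidean dual of the conjugate code obtained by raising all coordinates to the $q$-th power, and this coordinatewise operation is a bijection preserving $\mathbb{F}_{q^2}$-dimension. Hence the Singleton bound gives $d^{\perp_h}\leq k+1<d$, and, exactly as above, a minimum-weight word of $\mathcal{C}^{\perp_h}$ lies in $\mathcal{C}^{\perp_h}\setminus\mathcal{C}$, so the minimum weight of $\mathcal{C}^{\perp_h}\setminus\mathcal{C}$ equals $d^{\perp_h}$; Theorem \ref{bueno}(2) then gives the stated pure quantum code. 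I do not expect any genuine obstacle here: the proof is essentially a one-line consequence of the Singleton bound for the dual code, the only mild care needed being the dimension bookkeeping in the Hermitian case noted above.
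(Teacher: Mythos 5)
Your argument is correct and is essentially the paper's own: apply the Singleton bound to the $[N,N-k]$ dual code to get $d^\perp\le k+1<d$, conclude that a minimum-weight dual codeword cannot lie in $\mathcal{C}$, and invoke Theorem \ref{bueno}. The paper states this in one line; you have simply filled in the same reasoning in more detail.
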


\begin{proof}
The result follows from Theorem \ref{bueno} and the fact $d^\perp \leq k+1$ (resp., $d^{\perp_h} \leq k+1$) by the Singleton bound.

\end{proof}

\subsection{Euclidean Inner Product}

Now we are going to consider codes within the framework of Theorem \ref{theorem2}, that is, codes $C(D,G)$ associated to curves with equation of the type $F(y)=H(x)$ such that $D=P_1+\cdots+P_N$ and $G=mP_{\infty}$, with $2g-2<m<N$ and $P_1,\ldots, P_N, P_{\infty}$ being rational points of the curve. The parameters of $C(D,G)$ are $[N,m-g+1,\ge N-m]$  (see \cite{Hoholdt}).


Moreover the dual code $C(D,G)^{\perp}=C(D,(2g-2+\deg(D)-\deg(M))P_{\infty}+M-G)$ has parameters
\begin{equation}\label{eq:dualparameters}
[N,N-m+g-1,\ge m-2g+2]
\end{equation}
Assuming self-orthogonality, Theorem \ref{bueno} provides a quantum code with parameters
\begin{equation}\label{eq:qparameters}
[[N,N-2(m-g+1),\ge m-2g+2]].
\end{equation}

Notice that, by Corollary \ref{corolariobueno}, this code is pure if
\begin{equation}\label{formulabuena}
N>2m-g+2.
\end{equation}

We notice here that all the forthcoming examples satisfy the above condition (\ref{formulabuena}) and, therefore, they are pure.

 With the same philosophy of the classical Gilbert-Varshamov bound, a sufficient condition for the existence of pure stabilizer codes with parameters $[[N,k,d]]_q$ is given by Feng and Ma in \cite{feng}.
Assuming $N > k\geq 2$, $d \geq 2$ and $N \equiv k \; (\mathrm{mod} \;2)$, this condition reads
\begin{equation}\label{GV}
\sum_{i=1}^{d-1} (q^2 -1)^{i-1} \binom{N}{i}  < \frac{q^{N-k+2}-1}{q^2-1}.
\end{equation}
In case $N$ odd and $k=1$, the condition is \[q^N +1 > \sum_{i=1}^{d-1} \binom{N}{i} [q (q^2-1)^{i-1} + (-1)^{i+1} (q + 1)^{i-1}]\] and there exists a similar formula for the case $N$ even and $k=0$.


We will use this bound as a measure of goodness of our codes.
We will only consider codes exceeding this bound, i.e.,
cases in which the parameters $q, N, k$ and $d$ satisfy
\begin{equation}\label{GV2}
\sum_{i=1}^{d-1} (q^2 -1)^{i-1} \binom{N}{i}  \geq  \frac{q^{N-k+2}-1}{q^2-1}.
\end{equation}
We will say that an $[[N,k,d]]_q$ quantum code is GV if it fulfills this inequality.

\subsubsection{Curves $A_{n,q,\ell}$}

Let $\ell$ and $n$ be positive integers (not both equal to 1) such that $\gcd(p,\ell)=1$ and let 
$A_{n,q,\ell}$ be the curve defined in Section \ref{Family1}. From Corollary \ref{ccc} and \eqref{eq:qparameters} it is deduced the following result:

\begin{thm}\label{th83}
Let $C(D,m P_{\infty})$ be the code coming from the curve $\tilde{\chi}_{A_{n,q,\ell}}$ over $\mathbb{F}_{q^n}$ as in Section \ref{Family1}. Assume that
$$
2(m+1)\leq (q^{n-1}-1)(\ell-1)+q^{n-1}(\mu\cdot \gcd(\ell (q-1), q^n-1)+1),
$$
where $\mu:=1$ if $p$ divides $\gcd(\ell (q-1), q^n-1)+1$ and $\mu:=0$ otherwise.
Then there exists a quantum code with parameters
$$
[[N,N-2m+2g-2,\ge m-2g+2]]_{q^n},
$$
where 
$N=N(A_{n,q,\ell},q^n)=q^{n-1}\cdot\left(e+1\right)$, $g=(q^{n-1}-1)(\ell-1)/2$ and $e:=\gcd(\ell (q-1), q^n-1)$.

\end{thm}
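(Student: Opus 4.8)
The plan is to assemble Theorem \ref{th83} directly from the results already established in the paper, treating it as a bookkeeping exercise rather than a new argument. First I would invoke Proposition \ref{prop:8} to guarantee that the curve $A_{n,q,\ell}$ over $\mathbb{F}_{q^n}$ satisfies the hypotheses of Theorem \ref{theorem2} (and hence of Corollaries \ref{coro}, \ref{coro1} and \ref{ccc}): it is smooth with a single place at infinity, and the set $\mathcal{A}$ consists of the $x$-coordinates of the $\mathbb{F}_{q^n}$-rational points. In particular this records $N = N(A_{n,q,\ell},q^n) = q^{n-1}(e+1)$ with $e = \gcd(\ell(q-1),q^n-1)$, and, via Lemma \ref{infinity}(c), the genus $g = (q^{n-1}-1)(\ell-1)/2$.

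Next I would cite Corollary \ref{ccc} verbatim: under the stated numerical hypothesis
\[
2(m+1)\leq (q^{n-1}-1)(\ell-1)+q^{n-1}(\mu\cdot \gcd(\ell (q-1), q^n-1)+1),
\]
with $\mu$ as defined, the code $C(D,mP_{\infty})$ is self-orthogonal, i.e.\ $C(D,mP_\infty)\subseteq C(D,mP_\infty)^{\perp}$. Here one should observe that $2g-2 < m < N$ is implicitly in force (the right-hand side of the inequality is below $N$, and the codes of interest have $m$ in the range where the parameter formulas hold), so the dual description from Theorem \ref{theorem2}(a) applies and the parameters $[N,N-m+g-1,\ge m-2g+2]$ of the dual code, and the quantum parameters \eqref{eq:qparameters}, are valid.

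Finally I would apply Theorem \ref{bueno}(1) to the self-orthogonal code $C(D,mP_\infty)$, which has dimension $k = m-g+1$, to obtain a stabilizer quantum code with parameters $[[N,\,N-2k,\,\ge d^{\perp}]]_{q^n} = [[N,\,N-2m+2g-2,\,\ge m-2g+2]]_{q^n}$, using the dual minimum distance bound $d^{\perp}\ge m-2g+2$ from \eqref{eq:dualparameters}. Substituting the explicit values of $N$ and $g$ gives the statement. I do not anticipate a genuine obstacle here; the only point requiring a little care is checking that $m$ indeed lies in the admissible range $2g-2<m<N$ so that the AG-code parameter formulas and the dual identity of Theorem \ref{theorem2}(a) are legitimately available — this follows since the self-orthogonality hypothesis forces $2(m+1)$ below a quantity that is at most $N+2g-2$, while the codes under consideration are taken with $m>2g-2$ so that $C(D,mP_\infty)$ has the expected dimension $m-g+1$.
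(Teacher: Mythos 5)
Your proposal follows the same route as the paper: Proposition~\ref{prop:8} and Lemma~\ref{infinity}(c) give the geometric data ($N$, $g$, ${\mathcal A}$), Corollary~\ref{ccc} gives self-orthogonality under the stated numerical hypothesis, and then Theorem~\ref{bueno}(1) together with the standard AG parameter formulas \eqref{eq:dualparameters}--\eqref{eq:qparameters} produce the quantum code. That is exactly the ``deduced from Corollary~\ref{ccc} and~\eqref{eq:qparameters}'' argument the paper gives.

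There is, however, one point where your write-up is not quite right, and it is precisely the point the paper flags immediately after the theorem. You assert that $2g-2<m<N$ is ``implicitly in force'' as a consequence of the self-orthogonality hypothesis, and then in the last sentence you instead treat $m>2g-2$ as a standing assumption about ``the codes under consideration.'' Neither is correct in general. When $\mu=0$ the hypothesis reads $2(m+1)\leq 2g+q^{n-1}$, which can easily force $m\leq 2g-2$; the paper's own Example~\ref{exam:A1} ($A_{2,9,8}$, $m=9$, $2g-2=54$) is a $\mu=0$ instance where $m<2g-2$ and the dimension of $C(D,G)$ had to be found with MAGMA rather than via $m-g+1$. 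The paper is explicit about this: ``under the hypotheses of the above theorem, $2g-2<m<N$ only if $\mu=1$.'' So the Riemann--Roch dimension $m-g+1$, and hence the stated quantum parameters $[[N,N-2m+2g-2,\geq m-2g+2]]$, are justified by your argument only in the $\mu=1$ regime; for $\mu=0$ those parameters are formal (indeed $N-2m+2g-2$ can exceed $N$) and the argument you give does not establish them. You should either restrict to $\mu=1$ or acknowledge, as the paper does, that the $\mu=0$ case falls outside the range where~\eqref{eq:qparameters} is valid.
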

Notice that, under the hypotheses of the above theorem, $2g-2<m<N$ only if $\mu=1$, and these cases satisfy the hypotheses of \cite[Lemma 2]{Quantum} (they correspond to Type I of Remark \ref{comment}).





First we give an example where $\mu=0$.

\begin{exam}\label{exam:A1}
Consider the curve $A_{2,9,8}$, with equation $y^9 + 2x^8 z + yz^8=0$.
For $m=9<2g-2=54$, the quantum code obtained from Theorem \ref{th83} has 
parameters $[[153,147,3]]_{3^4}$.
The dimension of $C(D,G)$ and the distance of its dual have been computed using MAGMA.
\end{exam}

Next we give an example where $\mu=1$.

\begin{exam}\label{exam:A2}
Consider the curve $A_{2,9,10}$, with equation $y^9z + 2x^{10} + yz^9=0$.
For   $j=0,\ldots,381-194$ we have quantum codes with parameters
$$
[[729,413-2j,123+j]]_{3^4},
$$
which correspond to $194\le m\le 381$. By \eqref{formulabuena} these codes are pure. Moreover they are GV.
\end{exam}

\subsubsection{Curves $B_{q,x^{q^k+1}+x}$}

From Section \ref{Family2} and Eq.~\eqref{eq:qparameters} we get the following:

\begin{thm}
Asume that $\gcd(p,2k)=1$ and $n=2k$. Consider the curve $B_{q,H_k}$, with equation $y^q-y=H_k(x)=x^{q^k+1}+x$, and the code $C(D,m P_{\infty})$ coming from $\tilde{\chi}_{B_{q,H_k}}$ (over $\mathbb{F}_{q^n}$), as defined in Section \ref{Family2}. Assume that 
$$
2(m+1)\leq q^n.
$$
Then there exists a quantum code with parameters
$$
[[q^n+q^k,q^n+q^k-2m+2g-2,\ge m-(q-1)q^k+2]]_{q^n}.
$$
\end{thm}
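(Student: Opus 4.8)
The plan is to apply the machinery already set up in Section~\ref{Family2}, specialized to the curve $B_{q,H_k}$ with $n=2k$, and then feed the resulting self-orthogonality range into the quantum code parameters \eqref{eq:qparameters}. Concretely, the lemma just proved shows that $\mathrm{Tr}_n(H_k)$ is separable with all roots in $\mathbb{F}_{q^n}$, so $B_{q,H_k}$ satisfies the hypotheses of Corollary~\ref{ccc2}; we also computed there that $\deg(\mathrm{Tr}_n(H_k))=q^{n-1}+q^{k-1}$, that $f'_{\mathcal A}(z)=(z+1/2)^{q^k}$, that $N(B_{q,H_k},q^n)=q^n+q^k$, and that the self-orthogonality condition of Corollary~\ref{ccc2} simplifies to the clean bound $2(m+1)\leq q^n$ displayed in \eqref{diamonds}. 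So the hypothesis $2(m+1)\le q^n$ is exactly what guarantees $C(D,mP_\infty)\subseteq C(D,mP_\infty)^{\perp}$.

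Next I would identify the three parameters feeding \eqref{eq:qparameters}: the length $N=q^n+q^k$, the genus $g$, and then read off $[[N,\,N-2(m-g+1),\,\ge m-2g+2]]$. The genus comes from Lemma~\ref{infinity}(c) applied to $F(y)=y^q-y$ and $H(x)=x^{q^k+1}+x$: since $\deg(F)=q$ and $\deg(H)=q^k+1>q$, we get $g=\tfrac12(q-1)\big((q^k+1)-1\big)=\tfrac12(q-1)q^k$. Substituting this into $N-2(m-g+1)$ gives the stated dimension $q^n+q^k-2m+2g-2$ (keeping $g$ symbolic as in the statement), and substituting into $m-2g+2$ gives $m-(q-1)q^k+2$, which matches the claimed distance bound. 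One should also note in passing that $2g-2=(q-1)q^k-2<q^n-2<m$ is consistent with the range $2g-2<m<N$ implicitly required for \eqref{eq:qparameters} to give the stated parameters (and, if one wants purity as remarked after \eqref{formulabuena}, one checks $N>2m-g+2$, though the statement as written does not assert purity).

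The proof is therefore essentially a substitution: invoke Corollary~\ref{ccc2} together with the degree and point-count computations from the preceding lemma to get self-orthogonality under $2(m+1)\le q^n$, then invoke Theorem~\ref{bueno}(1) — equivalently the already-derived formula \eqref{eq:qparameters} — with $N=q^n+q^k$ and $g=\tfrac12(q-1)q^k$. There is no real obstacle here; the only thing requiring a line of justification is the arithmetic identity $m-2g+2 = m-(q-1)q^k+2$, i.e. that $2g=(q-1)q^k$, which is immediate from Lemma~\ref{infinity}(c). If anything, the subtle point is making sure the hypotheses $q$ odd and $\gcd(p,2k)=1$ from the lemma are carried along so that Corollary~\ref{ccc2} legitimately applies; these are inherited verbatim into the theorem statement.
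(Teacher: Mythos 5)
Your proposal is correct and follows exactly the route the paper takes: the paper's entire ``proof'' of this theorem is the one-line remark ``From Section \ref{Family2} and Eq.~\eqref{eq:qparameters} we get the following,'' and you have simply unpacked that remark. Your verification that the bound in Corollary \ref{ccc2} collapses to $2(m+1)\le q^n$ (using $\deg(G)=q^k+1$, $\deg(\mathrm{Tr}_n(H_k))=q^{n-1}+q^{k-1}$, $\deg(\mathrm{Tr}_n(H_k)')=q^k$), your reading of $g=\tfrac12(q-1)q^k$ from Lemma \ref{infinity}(c), and your substitution into \eqref{eq:qparameters} are all exactly right. Two tiny notes: the hypothesis ``$q$ odd'' from the lemma is not ``inherited verbatim'' into the theorem as you say, but it is implied by $\gcd(p,2k)=1$ (which forces $p\ne 2$), so the logic is sound; and your parenthetical chain ``$(q-1)q^k-2<q^n-2<m$'' contains a typo (the last inequality should go the other way, i.e.\ the valid range is $2g-2<m\le q^n/2-1$, which is non-empty since $(q-1)q^k<q^{2k}/2$ for odd $q$). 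Neither affects the correctness of the argument.
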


Now we include some examples of quantum codes coming from the above theorem. 

\begin{exam}\label{ex:B1}
Consider $q=3$, $n=8$, $k=4$. For   $j=0,\ldots,3279-538$ we have quantum codes with parameters 
$$
[[6642,5726-2j,378+j]]_{3^8},
$$
which correspond to $538\le m\le 3279$. By \eqref{formulabuena} these codes are pure. Moreover they are GV.
\end{exam}

\begin{exam}\label{ex:B2}
Consider $q=5$, $n=6$, $k=3$. For  $j=0,\ldots,7811-1955$  we have quantum codes with parameters 
$$
[[15750,12338-2j,1457+j]]_{5^6},
$$
which correspond to $1955\le m\le 7811$. By \eqref{formulabuena} these codes are pure. Moreover they are GV.
\end{exam}

\subsubsection{Curves $C_{q,\ell}$}

From Corollary \ref{ccc3} and Eq.~\eqref{eq:qparameters} we obtain the following result concerning codes coming from curves in the family given in Section \ref{Family31}.

\begin{thm}
Consider the code $C(D,m P_{\infty})$ coming from the curve $\tilde{\chi}_{C_{q,\ell}}$ over $\mathbb{F}_{q^2}$ under the assumptions and hypotheses of Corollary \ref{ccc3}. If 
$$
2(m+1)\leq (q-1)(\ell-1)+q(e+1)-\mu \cdot qe,
$$
where $\mu=0$ if $p$ divides $e+1$ and $\mu=1$ otherwise, then
there exists a quantum code with parameters
$$
[[N,N-2m+2g-2,\ge m-(q-1)(\ell-1)+2]]_{q},
$$
where 
$N=N(C_{q,\ell},q^2)=q(e+1)$ and $e=\gcd(\ell (q-1), q^2-1)$.

\end{thm}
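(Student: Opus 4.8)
The plan is to assemble this theorem directly from the pieces already established in Sections \ref{Family31} and \ref{sect5}. The curve $C_{q,\ell}$ over $\mathbb{F}_{q^2}$ satisfies the hypotheses of Corollary \ref{coro1} (by Proposition \ref{prop:50}), so Corollary \ref{ccc3} tells us exactly when $C(D,mP_{\infty})\subseteq C(D,mP_{\infty})^{\perp}$, namely when
\[
2(m+1)\leq (q-1)(\ell-1)+q(e+1)-\mu\cdot qe,
\]
with $\mu$ as in the statement. This is precisely the self-orthogonality hypothesis we assume, so the classical code $\mathcal{C}:=C(D,mP_{\infty})$ is self-orthogonal over $\mathbb{F}_q$. (Note that the codes here are defined over $\mathbb{F}_q$, not $\mathbb{F}_{q^2}$, since the evaluation is of functions in $\mathcal{L}(mP_{\infty})$ on $\mathbb{F}_{q^2}$-rational points but the code alphabet for the curve $C_{q,\ell}$ as used in this family is $\mathbb{F}_q$ — one should double-check the alphabet against Corollary \ref{ccc3} and state it consistently.)

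Next I would record the parameters. By Lemma \ref{infinity}(c) the genus is $g=(q-1)(\ell-1)/2$, and by Proposition \ref{prop:50}(2) the length is $N=N(C_{q,\ell},q^2)=q(e+1)$ with $e=\gcd(\ell(q-1),q^2-1)$. As in Subsection \ref{sect5} (Euclidean inner product), for $2g-2<m<N$ the code $C(D,mP_{\infty})$ has parameters $[N,m-g+1,\geq N-m]$ and, via Theorem \ref{theorem2}(a) together with the dual parameter computation \eqref{eq:dualparameters}, its dual $C(D,mP_{\infty})^{\perp}$ has parameters $[N,N-m+g-1,\geq m-2g+2]$. Since here $2g-2 = (q-1)(\ell-1)-2$, the quantity $m-2g+2$ equals $m-(q-1)(\ell-1)+2$, which is the claimed designed distance. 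Then I would invoke Theorem \ref{bueno}(1): from a self-orthogonal $[N,k,d]$ code with $k=m-g+1$ one gets an $[[N,N-2k,\geq d^{\perp}]]_q$ stabilizer code, i.e.\ $[[N,N-2(m-g+1),\geq m-2g+2]]_q$, which is $[[N,N-2m+2g-2,\geq m-(q-1)(\ell-1)+2]]_q$. Substituting $N=q(e+1)$ and $g=(q-1)(\ell-1)/2$ gives the stated parameters.

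The one genuine point to verify is the range condition $2g-2<m<N$ needed for the parameter formulas of $C(D,mP_{\infty})$ to hold; this is implicit in the statement (one reads off the valid range of $m$ from both the self-orthogonality bound and $2g-2<m<N$), and it should be remarked that the interesting cases — in particular those with $\mu=1$, i.e.\ $p\nmid e+1$, which are the Type II cases of Remark \ref{comment} — do fall in this range, exactly as in Examples analogous to \ref{exam:A1} and \ref{exam:A2}. The only mild obstacle is bookkeeping: making sure the designed distance $m-2g+2$ is rewritten correctly as $m-(q-1)(\ell-1)+2$ and that $N-2m+2g-2$ is the honest dimension; no real mathematical difficulty arises since all the ingredients (self-orthogonality, dual description, genus, point count, and the CSS-type Theorem \ref{bueno}) are already in hand. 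Finally I would note, as the paper does for the other families, that the purity condition \eqref{formulabuena} should be checked in the explicit examples rather than claimed in general.
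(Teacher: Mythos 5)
Your proof is correct and follows essentially the same route as the paper: invoke Corollary \ref{ccc3} for the self-orthogonality threshold, read off the parameters $[N,m-g+1,\ge N-m]$ together with the dual parameters \eqref{eq:dualparameters} for $2g-2<m<N$, and then feed this into Theorem \ref{bueno}(1) to get \eqref{eq:qparameters}, after substituting $g=(q-1)(\ell-1)/2$ and $N=q(e+1)$ from Proposition \ref{prop:50}. One small slip in your side remark on the alphabet: the evaluation code $C(D,mP_\infty)$ here lives in $\mathbb{F}_{q^2}^N$ (the points are $\mathbb{F}_{q^2}$-rational and the base field in Section \ref{Family31} is $\mathbb{F}_{q^2}$), so Theorem \ref{bueno}(1) actually produces a stabilizer code over $\mathbb{F}_{q^2}$; the subscript $q$ in the theorem statement looks like a typo, consistent with Example \ref{ex:C1} which records the code over $\mathbb{F}_{3^4}=\mathbb{F}_{q^2}$ for $q=9$.
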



If $\mu=0$ then the value for $m$ is less than $2g-2$ 
so we have to compute the dimension and the minimum distance using MAGMA.

\begin{exam}\label{ex:C1}
Consider  $p=3$, $n=4$, $q=p^2$ and $\ell=5$. 
We choose $m=9$ and so we consider the code $\mathcal{C}=C(D,9P_\infty)$.
Using MAGMA we compute that $N=369$ (thus the curve is maximal) and
$\dim (\mathcal{C})=3$ and $d(\mathcal{C}^{\perp})=3$, so there exists a quantum code with parameters $[[369,363, \ge 3]]_{3^4}$. 
\end{exam}

We present now an example where $\mu=1$; notice that this 
satisfies the hypothesis of \cite[Lemma 2]{Quantum}.

\begin{exam}\label{ex:C2}
Consider $p=3$, $n=6$, $q=p^3$ and $\ell=7$. For 
$j=0,\ldots,2508-313$, there exist quantum codes with parameters
$$
[[ 4941, 4469-2j,159+j]]_{3^6}
$$
which correspond to $313\le m\le 2508$. By \eqref{formulabuena} these codes are pure. Moreover they are GV.
\end{exam}

\subsection{Hermitian Inner Product}



Let $\mathbf{F}=\mathbb{F}_{q^2}$, $q$ being a power of a prime number. Within this framework, the results of Section \ref{sect3} can be applied to obtain quantum codes by using Hermitian inner product instead of Euclidean inner product. 

Notice that ${\bf x} \cdot_h {\bf y}=0$ if and only if ${\bf x}\cdot {\bf y}^q=0$. For any linear code $\mathcal{C}$ we have therefore that
$\mathcal{C}\subseteq \mathcal{C}^{\perp_h}$ if and only if $\mathcal{C}^q\subseteq \mathcal{C}^{\perp}$. For AG codes we have that 
$C(D,G)^q\subseteq C(D,qG)$. Hence if $C(D,qG)\subseteq C(D,G)^{\perp}$ then 
\[
C(D,G)^q\subseteq C(D,qG)\subseteq C(D,G)^{\perp}
\]
and, therefore, $C(D,G)\subseteq C(D,G)^{\perp_h}$.
So we can trivially extend previous results (Theorem \ref{theorem2} and Corollaries \ref{coro}, \ref{coro1}, and \ref{coro2}) using the latter observation.

\begin{thm}\label{theorem91}
Let $\mathbf{F}$ be as before and let $C$, $g$, ${\mathcal A}$, $f'_{\mathcal A}$, $M$, $D$ and $G$ be as in Theorem \ref{theorem2}. Then:
\begin{itemize}
\item[(a)]  If  $(q+1)G\leq(2g-2+\deg(D)-\deg(M))P_{\infty}+M$ then $C(D,G)\subseteq C(D,G)^{\perp_h}$.
\item[(b)] If $G=mP_{\infty}$, with $m\in \mathbb{N}$, and $(q+1)m\leq 2g-2+\deg(D)-\deg(M)$ then $C(D,G)\subseteq C(D,G)^{\perp_h}$.
\item[(c)] If $G=mP_{\infty}$, with $m\in \mathbb{N}$, the curve $C$ has an equation of the type $F(y)=G(x)$, where $F, G$ are polynomials with coefficients in $\mathbf{F}$, and $$(q+1)m\leq 2g-2+\deg(D)-\deg(f'_{\mathcal A})\deg(F)\;\mbox{ or }\;(q+1)m\leq 2g-2+\deg (F),$$ then $C(D,G)\subseteq C(D,G)^{\perp_h}$. 
\end{itemize}
\end{thm}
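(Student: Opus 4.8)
The plan is to reduce all three parts to Theorem~\ref{theorem2} and its corollaries, using the two facts recorded immediately before the statement: that for a linear code $\mathcal{C}$ over $\mathbb{F}_{q^2}$ one has $\mathcal{C}\subseteq\mathcal{C}^{\perp_h}$ if and only if $\mathcal{C}^q\subseteq\mathcal{C}^{\perp}$, and that $C(D,G)^q\subseteq C(D,qG)$ for AG codes. The latter inclusion is the only point needing a word of justification rather than pure bookkeeping: if $f\in{\mathcal L}(G)$ then $(f^q)=q(f)$, so $f^q\in{\mathcal L}(qG)$, and raising a vector coordinatewise to the $q$-th power commutes with $ev_{\mathcal P}$; hence $C(D,G)^q=ev_{\mathcal P}(\{f^q:f\in{\mathcal L}(G)\})\subseteq ev_{\mathcal P}({\mathcal L}(qG))=C(D,qG)$. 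I would simply cite this, as it is already asserted in the text.

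For part (a) I would proceed as follows. By Theorem~\ref{theorem2}(a),
$C(D,G)^{\perp}=C\big(D,(2g-2+\deg(D)-\deg(M))P_{\infty}+M-G\big)$. The hypothesis $(q+1)G\leq(2g-2+\deg(D)-\deg(M))P_{\infty}+M$ rearranges to $qG\leq(2g-2+\deg(D)-\deg(M))P_{\infty}+M-G$, so by monotonicity of ${\mathcal L}(\cdot)$ with respect to the divisor we get $C(D,qG)\subseteq C(D,G)^{\perp}$. Chaining with $C(D,G)^q\subseteq C(D,qG)$ gives $C(D,G)^q\subseteq C(D,G)^{\perp}$, and the equivalence above yields $C(D,G)\subseteq C(D,G)^{\perp_h}$.

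For part (b), specialize $G=mP_{\infty}$. Since $M$ is effective with support disjoint from $\{P_{\infty}\}$ (it is the divisor of affine zeroes), the divisor inequality $(q+1)mP_{\infty}\leq(2g-2+\deg(D)-\deg(M))P_{\infty}+M$ is equivalent to the numerical inequality $(q+1)m\leq 2g-2+\deg(D)-\deg(M)$; thus (b) is just the case $G=mP_{\infty}$ of (a). For part (c), I would invoke Corollary~\ref{coro1}, which gives $\deg(M)=\deg(f'_{\mathcal A})\deg(F)$, so the first displayed inequality in (c) is precisely the hypothesis of (b). For the second alternative, I would recall from the proof of Corollary~\ref{coro2} that $\deg(D)=\#{\mathcal A}\cdot\deg(F)$ and $\deg(M)\leq(\#{\mathcal A}-1)\deg(F)=\deg(D)-\deg(F)$, whence $2g-2+\deg(F)\leq 2g-2+\deg(D)-\deg(M)$; so $(q+1)m\leq 2g-2+\deg(F)$ again implies the hypothesis of (b), and (b) finishes the proof. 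There is no real obstacle here — the result is a formal consequence of Theorem~\ref{theorem2} plus the standard Hermitian/Euclidean duality dictionary — so the write-up is essentially a matter of chaining the inclusions carefully and tracking the divisor inequalities.
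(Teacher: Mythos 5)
Your proof is correct and takes essentially the same route the paper intends: the paper records exactly the two facts you cite (the Hermitian/Euclidean dictionary $\mathcal{C}\subseteq\mathcal{C}^{\perp_h}\Leftrightarrow\mathcal{C}^q\subseteq\mathcal{C}^\perp$, and $C(D,G)^q\subseteq C(D,qG)$) immediately before the statement and then declares Theorem~\ref{theorem91} a ``trivial'' extension of Theorem~\ref{theorem2} and Corollaries~\ref{coro}, \ref{coro1}, \ref{coro2}, giving no further details. You have simply spelled out the chaining of inclusions, the observation that effectivity of $M$ and disjointness of its support from $P_\infty$ reduce the divisor inequality in (b) to a numerical one, and the substitutions from Corollaries~\ref{coro1} and \ref{coro2} for part (c), all of which is exactly what the paper's remark implicitly invokes.
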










Notice that the values $m$ satisfying parts (b) and (c) of Theorem \ref{theorem91} are not bigger than $2g-2$. Hence, in practice, we are forced to compute the minimum distance of associated quantum codes with MAGMA. Finally we provide two examples applying Theorem \ref{theorem91}.


\begin{exam}
Applying Theorem \ref{theorem91} to the code of Example \ref{exam:A1} we can produce a quantum code with parameters $[[153,147,3]]_{3^2}$. By \eqref{formulabuena} these codes are pure. Moreover they are GV.
\end{exam}

\begin{exam}\label{ex:D}
Similarly, considering the curve $C_{9,5}$ from Section \ref{Family3} (with equation $y^9 + 2 x^5 z^4 + 2 y z^8=0$), with  $q=9$, $n=4$, $\ell=5$, for $m=9<2g-2=30$ we have the quantum  code $[[369,363,3]]_{3^2}$. By \eqref{formulabuena} these codes are pure. Moreover they are GV.
\end{exam}

\subsection{Comparison with other papers}

As we already mentioned in this paper, we obtain new results for any curve where the divisor $M$ in Theorem \ref{theorem2}  is not equal to zero.  All the examples in  \cite{Quantum} are 
the special case of our results when $M=0$.

Jin and Xing have the following interesting result in  \cite{JinXing}:
If
\[
deg(G)<\frac{N}{2}(1-\frac{1}{N}+log_q(1-\frac{1}{q})-log_q(2)).
\]
and $q$ is even then 
there exists an equivalent code to $C(D,G)$ over $\mathbb{F}_q$ which is Euclidean Self-Orthogonal.

We compare some curves and codes  with this result in the case $G=m P_{\infty}$.
We now present some examples of curves that satisfy the hypotheses of Theorem 3.1 and
where the divisor $M$ is nonzero (checked with Magma).


\begin{exam}\label{exam:26}
Here $q=2$ and $n=6$. The curve is defined by $F(y)=H(x)$ where

$F(y)=y^{16} + y^4  + y$

$H(x)=x^{17} + x^{13} + x^6 + x^4 + x^3 + x+1$.

The Jin-Xing bound implies that there exists a self-orthogonal code (from some curve) for $m\le 66.27$, and our bound 
in Theorem   \ref{theorem2} implies that there exists a self-orthogonal code (from this curve) for $m\le 135.5$.
\end{exam}

There is something of additional interest in the previous  example; our bound is tight.
We confirmed with  Magma  that the AG code is self-orthogonal for $m\le  135$ and NOT self-orthogonal for $m=136$.
This shows that, in some sense, our bound cannot be improved.

\begin{exam}\label{exam:27}
$q=2$, $n=7$

$F(y)= y^8 + y^4 + y $

$H(x)=x^{29} + x^{17} + x^{15} + x^{13} + x^{12} + x^{11} + x^9 + x^4 + x+1$, 

See Table 1 for the range of values for $m$.
\end{exam}

\begin{exam}\label{exam:28}
$q=2$, $n=8$

$F(y)=y^{64}+y^{16}+y^4+y  $

$H(x)=x^{67} + x^{63} + x^{61} + x^{58} + x^{56} + x^{54} + x^{53} + x^{52} + x^{50} + x^{49} + x^{48}
        + x^{46} + x^{45} + x^{44} + x^{39} + x^{37} + x^{36} + x^{35} + x^{34} + x^{33} + x^{31} +
        x^{29} + x^{23} + x^{20} + x^{16} + x^{15} + x^{14} + x^{10} + x^8 + x^4 + x^3+1 $, 

See Table 1 for the range of values for $m$.
\end{exam}

\begin{table}[htbp]
\begin{center} 
\begin{tabular}{|l||l||l||l||l||l||l||l|l|}
\hline
  &    \cite{JinXing} & This paper \\
\hline \hline
Example \ref{exam:max} &\text{nothing for $q$ odd}&$17 \le m \le 129$ \\ \hline
Example \ref{exam:26} & $m\le 66 $  & $m\le 135$  \\ \hline
Example \ref{exam:27} & $m\le 78 $  & $m\le 101$  \\ \hline
Example \ref{exam:28} & $m\le 1902 $  & $m\le 2398$  \\ \hline
Example \ref{ex:B1} & $\text{nothing for $q$ odd} $  & $m\le 3279$  \\ \hline
Example \ref{ex:B2} & $\text{nothing for $q$ odd} $  & $m\le 7811$  \\ \hline
Example \ref{ex:C1} & $\text{nothing for $q$ odd} $  & $m\le 2508$  \\ \hline
\end{tabular}
\end{center}
Table 1
\end{table}

Any example with $q$ odd will improve on  \cite{JinXing} because the bound in that
paper for Euclidean codes is not valid when $q$ is odd.
From our infinite families we list three examples in the table.
They do not appear in  \cite{Quantum}  because the divisor $M$ is nonzero, 
as we proved in the earlier sections.

 {\bf Acknowledgements} We thank J.I.~Farr\'an and C.~Munuera  for helpful conversations.

\end{document}